\newtheorem{thm}{Theorem} 
\newtheorem{cor}[thm]{Corollary} 
\newtheorem{lem}[thm]{Lemma} 
\newtheorem{conj}[thm]{Conjecture} 
\newtheorem{prop}[thm]{Proposition} 
\newtheorem{defn}[thm]{Definition} 
\newtheorem{rem}[thm]{Remark} 
\renewcommand{\int}{\operatorname{int}}
\newcommand{\sign}{\operatorname{sign}}
\newcommand{\Z}{\mathbb{Z}}
\newcommand{\CP}{\mathbb{CP}}
\newcommand{\cT}{\mathcal{T}}
\newcommand{\cW}{\mathcal{W}}
\newcommand{\cV}{\mathcal{V}}
\newcommand{\cM}{\mathcal{M}}
\newcommand{\sL}{\text{\sf{L}}}
\newcommand{\sRL}{\text{\sf{RL}}}
\newcommand{\imra}{\looparrowright}
\def\YEAR{\year}\newcount\VOL\VOL=\YEAR\advance\VOL by-1995
\def\firstpage{1}\def\lastpage{1000}
\def\received{}\def\revised{}
\def\communicated{}
\def\magnification{\afterassignment\m@g\count@}
\def\m@g{\mag=\count@\hsize6.5truein\vsize8.9truein\dimen\footins8truein}
\font\eightrm=cmr8
\font\caps=cmcsc10                    
\font\Caps=cmcsc10 scaled \magstep1   
\def\DocMath{}
\renewcommand{\@evenhead}{%
    \ifnum\thepage>\lastpage\rlap{\thepage}\hfill%
    \else\rlap{\thepage}\slshape\leftmark\hfill{\caps\SAuthor}\hfill\fi}%
\renewcommand{\@oddhead}{%
    \ifnum\thepage=\firstpage{\DocMath\hfill\llap{\thepage}}%
    \else{\slshape\rightmark}\hfill{\caps\STitle}\hfill\llap{\thepage}\fi}%
\def\TSkip{\bigskip}
\newbox\TheTitle{\obeylines\gdef\GetTitle #1
\ShortTitle  #2
\SubTitle    #3
\Author      #4
\ShortAuthor #5
\EndTitle
{\setbox\TheTitle=\vbox{\baselineskip=20pt\let\par=\cr\obeylines%
\halign{\centerline{\Caps##}\cr\noalign{\medskip}\cr#1\cr}}%
	\copy\TheTitle\TSkip\TSkip%
\def\next{#2}\ifx\next\empty\gdef\STitle{#1}\else\gdef\STitle{#2}\fi%
\def\next{#3}\ifx\next\empty%
    \else\setbox\TheTitle=\vbox{\baselineskip=20pt\let\par=\cr\obeylines%
    \halign{\centerline{\caps##} #3\cr}}\copy\TheTitle\TSkip\TSkip\fi%
\centerline{\caps #4}\TSkip\TSkip%
\def\next{#5}\ifx\next\empty\gdef\SAuthor{#4}\else\gdef\SAuthor{#5}\fi%
\ifx\received\empty\relax
    \else\centerline{\eightrm Received: \received}\fi%
\ifx\revised\empty\TSkip%
    \else\centerline{\eightrm Revised: \revised}\TSkip\fi%
\ifx\communicated\empty\relax
    \else\centerline{\eightrm Communicated by \communicated}\fi\TSkip\TSkip%
\catcode'015=5}}\def\Title{\obeylines\GetTitle}
\def\Abstract{\begingroup\narrower
    \parskip=\medskipamount\parindent=0pt{\caps Abstract. }}
\def\EndAbstract{\par\endgroup\TSkip}
\long\def\MSC#1\EndMSC{\def\arg{#1}\ifx\arg\empty\relax\else
     {\par\narrower\noindent%
     2000 Mathematics Subject Classification: #1\par}\fi}
\long\def\KEY#1\EndKEY{\def\arg{#1}\ifx\arg\empty\relax\else
	{\par\narrower\noindent Keywords and Phrases: #1\par}\fi\TSkip}
\newbox\TheAdd\def\Addresses{\vfill\copy\TheAdd\vfill
    \ifodd\number\lastpage\vfill\eject\phantom{.}\vfill\eject\fi}
{\obeylines\gdef\GetAddress #1
\Address #2 
\Address #3
\Address #4
\EndAddress
{\def\xs{4.3truecm}\parindent=0pt
\setbox0=\vtop{{\obeylines\hsize=\xs#1\par}}\def\next{#2}
\ifx\next\empty 
     \setbox\TheAdd=\hbox to\hsize{\hfill\copy0\hfill}
\else\setbox1=\vtop{{\obeylines\hsize=\xs#2\par}}\def\next{#3}
\ifx\next\empty 
     \setbox\TheAdd=\hbox to\hsize{\hfill\copy0\hfill\copy1\hfill}
\else\setbox2=\vtop{{\obeylines\hsize=\xs#3\par}}\def\next{#4}
\ifx\next\empty\ 
     \setbox\TheAdd=\vtop{\hbox to\hsize{\hfill\copy0\hfill\copy1\hfill}
                \vskip20pt\hbox to\hsize{\hfill\copy2\hfill}}
\else\setbox3=\vtop{{\obeylines\hsize=\xs#4\par}}
     \setbox\TheAdd=\vtop{\hbox to\hsize{\hfill\copy0\hfill\copy1\hfill}
	        \vskip20pt\hbox to\hsize{\hfill\copy2\hfill\copy3\hfill}}
\fi\fi\fi\catcode'015=5}}\gdef\Address{\obeylines\GetAddress}
\begin{document}
\Title
Pulling Apart 2--spheres in 4--manifolds
\ShortTitle 
Pulling Apart 2--spheres in 4--manifolds
\SubTitle   
\Author 
Rob Schneiderman and Peter Teichner
\ShortAuthor 
Schneiderman and Teichner
\EndTitle
\Abstract 
An obstruction theory for representing homotopy classes of surfaces
in 4--manifolds by immersions with pairwise disjoint images is
developed, using the theory of \emph{non-repeating} Whitney towers.
The accompanying higher-order intersection invariants provide a geometric generalization of Milnor's  link-homotopy invariants, and can give the complete obstruction to pulling apart 2--spheres in certain families
of 4--manifolds. It is also shown that in an arbitrary simply connected 4--manifold any number of parallel copies of an immersed $2$--sphere with vanishing self-intersection number can be pulled apart, and that this is not always possible in the non-simply connected setting. The order 1 intersection invariant is shown to be the complete obstruction to pulling apart 2--spheres in any 4--manifold after taking connected
sums with finitely many copies of $S^2\times S^2$; and
the order 2 intersection indeterminacies for quadruples of immersed 2--spheres in a simply-connected 4--manifold are shown to lead to interesting number theoretic questions.
\EndAbstract
\MSC 
Primary 57M99; Secondary 57M25.
\EndMSC
\KEY 
2--sphere, 4--manifold, disjoint immersion, homotopy invariant, non-repeating Whitney tower.
\EndKEY
\Address 
Rob Schneiderman 
Department of Mathematics 
\ \ and Computer Science,
Lehman College, 
\ \ City University of New York, 
New York NY 10468 USA
robert.schneiderman@
\ \ lehman.cuny.edu

\Address
Peter Teichner
Max-Planck Institut 
\ \ f{\"u}r Mathematik
Vivatsgasse 7, 53111 Bonn
Germany
teichner@mac.com
\Address
\Address
\EndAddress

\section{Introduction}
We study the question of whether a map $A:\Sigma \to X$ is homotopic to a map $A'$ such that $A'(\Sigma_i)$ are pairwise {\em disjoint} subsets of $X$, where $\Sigma = \amalg_i \Sigma_i$ is the decomposition into connected components.
In this case, we will say that $A$ {\em can be pulled apart}. 

This question arises as a precursor to the embedding problem -- whether or not $A$ is homotopic to an embedding. 
It also arises in the study of configuration spaces $X^{(m)}$ of $m$ {\em distinct ordered} points in $X$, where elements of $\pi_nX^{(m)}$ 
are represented by $m$ disjoint maps of $n$--spheres
to $X$, and one might ask whether or not a given element of the $m$-fold product $\prod^m\pi_nX$ 
lies in the image of the map $\pi_nX^{(m)}\to\prod^m\pi_nX$ induced by the canonical projections 
$p_1,\dots,p_m: X^{(m)} \to X$.

For example, let $\Sigma=\amalg_i S^n$ be a disjoint union of $n$--spheres and $X$ be a connected $2n$--manifold. For $n\geq 2$, there are Wall's well known intersection numbers $\lambda(A_i,A_j)\in\Z[\pi_1X]$, where $A_i:S^n \to X$ are the components of $A$ \cite{W}. These are obstructions for representing $A$ by an embedding, and the main geometric reason for the success of surgery theory is that, for $n\geq 3$, they are (almost) complete obstructions: The only missing ingredient is Wall's self-intersection invariant $\mu$, a quadratic refinement of $\lambda$. However, for the question of making the $A_i(S^n)$ disjoint, it is necessary and sufficient that $\lambda(A_i,A_j)=0$ for $i\neq j$. We abbreviate this condition on intersection numbers by writing $\lambda_0(A)=0$. 
 
As expected, the condition $\lambda_0(A)=0$ is not sufficient for pulling apart $A$ if $n=2$, but this failure is surprisingly subtle: Given only {\em two} maps $A_1,A_2:S^2 \to X^4$ with $\lambda(A_1,A_2)=0$, one can pull them apart by a clever sequence of finger moves and Whitney moves, see \cite{Ko} and Section~\ref{sec:Wall} below. 
However, this is {\em not} true any more for three (or more) 2--spheres in a 4--manifold.  In \cite{ST1} we defined an additional invariant $\lambda_1(A)$ which takes values in a quotient of $\Z[\pi_1X\times\pi_1X]$ and was shown to be the complete obstruction to pulling apart a {\em triple} $A=A_1,A_2,A_3: S^2 \to X$ of $2$--spheres mapped into an arbitrary $4$--manifold $X$ with  vanishing $\lambda_0(A)$. For trivial
$\pi_1X$  the analogous obstruction was defined earlier in \cite{Ma,Y}. 

In this paper, we extend this work to an arbitrary number of 2--spheres 
(and other surfaces -- see Remark~\ref{rem:pi1-null}) 
in $4$--manifolds. 
The idea is to apply a variation of the theory of {\em Whitney towers} as developed in
\cite{CST,CST0,CST1,CST2,S3,ST1,ST2} to address the problem. Before we introduce the relevant material on Whitney towers, we mention a couple of new results that can be stated without prerequisites. 

Throughout this paper the letter $m$ will usually denote the number of surface components to be pulled apart, and from now on the letters $\Sigma$ and  $X$ will be used to denote
surfaces and $4$--manifolds, respectively. The distinction between a map of a surface and its image in $X$ will frequently be disregarded in the interest of brevity.

\subsection*{Pulling apart parallel 2--spheres}
The following theorem is discussed and proven in Section~\ref{sec:parallel-thm-proof}:
\begin{thm}\label{thm:parallel}
If $X$ is a simply connected $4$--manifold and $A:\amalg^m S^2\to X$ consists of $m$ copies of {\em the same} map $A_0:S^2 \to X$ of a $2$--sphere with trivial normal Euler number, then $A$ can be pulled apart if and only if $[A_0]\in H_2(X;\Z)$ has vanishing homological self-intersection number $[A_0]\cdot [A_0]=0\in\Z$.
\end{thm}
Note that each transverse self-intersection of $A_0$ gives rise to $m^2-m$ intersections among the $m$ parallel copies $A$, not counting self-intersections, see Figure~\ref{fig:parallel-8-intersections-with-dots}. As a consequence, there cannot be a simple argument to pull $A$ apart. In fact, the analogous statement fails for non-simply connected 4--manifolds, see Example~\ref{example:parallel-counter-example}. 
\begin{figure}[ht!]
         \centerline{\includegraphics[scale=.5]{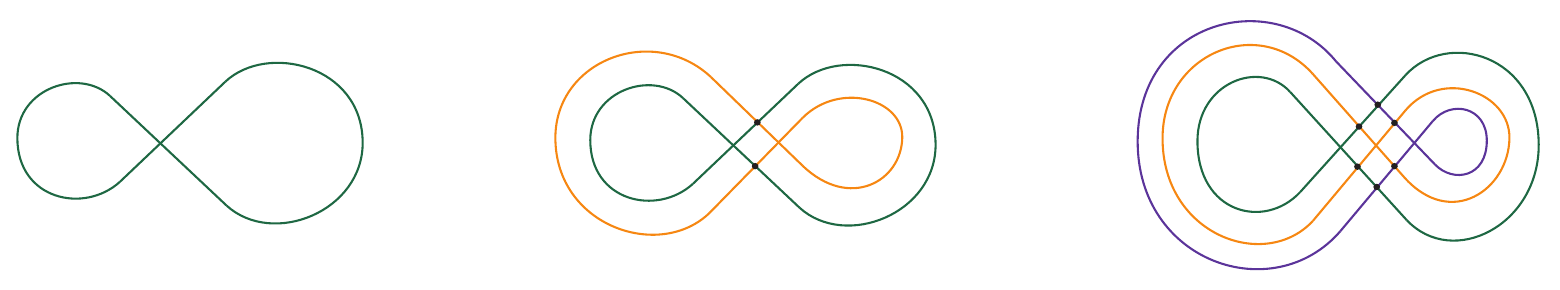}}
         \caption{One self-intersection leads to $m^2-m$ intersections among $m$ copies.}
         \label{fig:parallel-8-intersections-with-dots}
\end{figure}

\subsection*{Stably pulling apart 2--spheres}
We say that surfaces $A:\Sigma\to X$ can be \emph{stably} pulled apart if $A$ can be pulled apart after taking the connected sum of $X$ with finitely many copies of $S^2\times S^2$. The invariants $\lambda_0$ and $\lambda_1$ are unchanged by this stabilization, and in this setting they give the complete obstruction
to pulling apart $m$ maps of $2$--spheres: 

\begin{thm}\label{thm:stable-separation}
$A:\amalg^m S^2\to X$ can be stably pulled apart if and only if 
$\lambda_0(A)=0=\lambda_1(A)$. 
\end{thm}
This result also holds when the stabilizing factors $S^2\times S^2$ are replaced by any simply-connected closed 4-manifolds (other than $S^4$). It also holds when components of $A$ are maps of disks. The invariant 
$\lambda_1$ is described precisely in sections~\ref{sec:whitney-towers}
 and \ref{sec:order2-INT}; and the proof of Theorem~\ref{thm:stable-separation} is given in section~\ref{sec:stable-thm-proof}. (Note that $X$ is not required to be simply connected in Theorem~\ref{thm:stable-separation}.)
We remark that the stronger invariant $\tau_1(A)$ of \cite{ST1}, together with Wall's self-intersection invariant $\tau_0(A)$, is the complete obstruction to \emph{stably embedding} $A$, see \cite[Cor.1]{S3}. 

\begin{rem} \label{rem:clean-up}
The question of pulling apart surfaces in 4--manifolds is independent of category. More precisely, any connected 4--manifold can be given a smooth structure away from one point \cite{FQ} and any continuous map can be approximated arbitrarily closely by a smooth map. As a consequence, we can work in the smooth category and as a first step, we can always turn a map $A:\Sigma^2 \to X^4$ into a generic immersion. We will also assume that surfaces are \emph{properly} immersed, i.e.~$A(\partial \Sigma) \subset \partial X$ with the interior of $\Sigma$ mapping to the interior of $X$, and that homotopies fix the boundary. 
\end{rem}

\subsection{Pulling apart {\em two} 2--spheres in a 4--manifold} \label{sec:Wall}
To motivate the introduction of Whitney towers into the problem, it is important to understand the basic case of pulling apart {\em two} maps of 2--spheres $A_1,A_2 :S^2 \to X$. 
 Wall's intersection pairing associates a sign and an element of $\pi_1X$ to each
transverse intersection point between the surfaces, and the
vanishing of $\lambda(A_1,A_2)$ implies that all of these
intersections can be paired by {\em Whitney disks}. As illustrated in Figures~\ref{Push-downA1andA2-fig}
and~\ref{Push-downB1andB2-fig}, these Whitney
disks can be used to pull apart $A_1$ and $A_2$ by first pushing
any intersection points between $A_2$ and the interior of a
Whitney disk $W_{(1,2)}$ down into $A_2$, and then using the Whitney disks to
guide Whitney moves on $A_1$ to eliminate all intersections
between $A_1$ and $A_2$ (details in \cite{Ko}).
\begin{figure}[ht!]
         \centerline{\includegraphics[scale=.5]{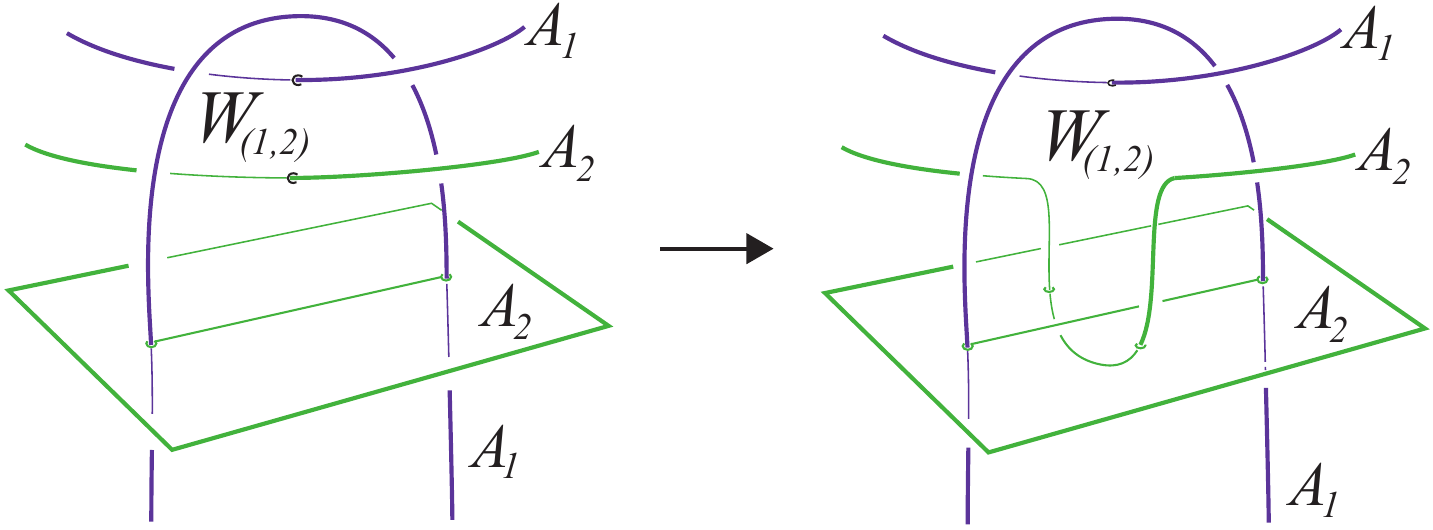}}
         \caption{Pushing an intersection between 
         $A_2$ and the interior of a Whitney disk $W_{(1,2)}$ down into $A_2$ only creates (two) self-intersections in $A_2$.}
         \label{Push-downA1andA2-fig}
\end{figure}
\begin{figure}[ht!]
         \centerline{\includegraphics[scale=.5]{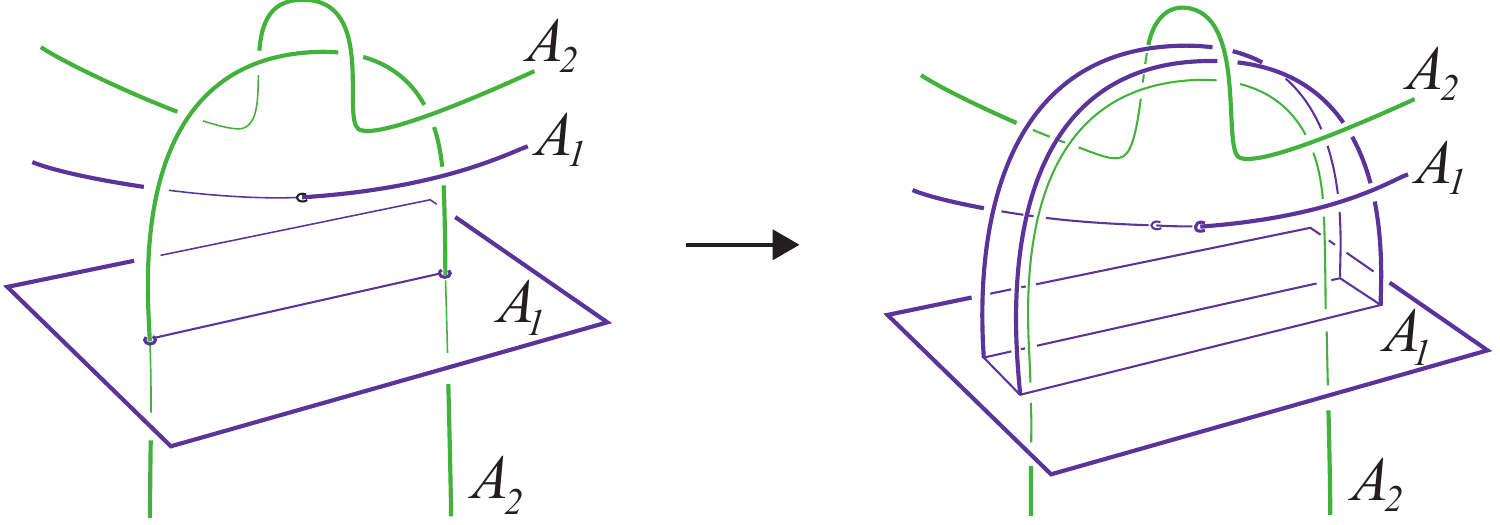}}
         \caption{A Whitney move guided by the Whitney disk of Figure~\ref{Push-downA1andA2-fig}. The intersection between $A_1$ and the interior of the Whitney disk becomes a pair of self-intersections of $A_1$ after the Whitney move.}
         \label{Push-downB1andB2-fig}
\end{figure}

\subsection{Pulling apart {\em three or more} 2--spheres} \label{subsec:3-or-more-intro}
Note that for a triple of spheres one cannot use the 
method of figures~\ref{Push-downA1andA2-fig} and \ref{Push-downB1andB2-fig} to eliminate an
intersection point between one sphere and a Whitney disk that
pairs intersections between the \emph{other} two spheres. Such
``higher-order'' intersections were used in \cite{ST1} to define the invariant
$\lambda_1(A)$ discussed above. In this case, the procedure for
separating the surfaces involves constructing ``second order''
Whitney disks which pair the intersections between surfaces and
Whitney disks. The existence of these second order Whitney disks
allows for an analogous pushing-down procedure which only creates
self-intersections and cleans up the Whitney disks enough to pull
apart the surfaces by an ambient homotopy.

Building on these ideas, we will describe an obstruction theory in
terms of {\em non-repeating Whitney towers} $\cW$ built on 
properly immersed surfaces in $X$, and {\em non-repeating
intersection invariants} $\lambda_n(\cW)$ taking values in quotients of the group ring
of $(n+1)$ products of $\pi_1X$. The {\em
order} $n$ of the non-repeating Whitney tower $\cW$ determines how many of the
underlying surfaces at the bottom of the tower can be made
pairwise disjoint by a homotopy, and the vanishing of $\lambda_n(\cW)$ is sufficient to find an order $n+1$
non-repeating Whitney tower. 

Non-repeating Whitney towers
are special cases of the Whitney towers defined in \cite{ST2} (see
also \cite{CST,CST1,CST2,CST3,S1,S2,ST1}). An introduction to these
notions is sketched here with details given in Section~\ref{sec:whitney-towers}.
We work in the smooth oriented category, with orientations usually suppressed.

\subsection{Whitney towers and non-repeating Whitney towers}\label{sec:intro-w-towers}
Consider $A:\Sigma=\amalg_i\Sigma_i^2 \to X^4$ where the surface components $\Sigma_i$ are spheres or disks (and see Remark~\ref{rem:clean-up} for initial clean-ups on $A$).  To begin our obstruction theory, we say that $A$ forms a {\em Whitney tower of order 0}, and define the \emph{order} of each properly immersed connected surface $A_i:\Sigma_i\to
 X$ to be zero. 

If all the singularities (transverse intersections) of $A$ can be paired by Whitney disks then
we get a {\em Whitney tower of order 1} which is the union of these
{\em order $1$ Whitney disks} and the order $0$ Whitney tower. 

If we only have Whitney disks pairing the intersections between \emph{distinct} order $0$ surfaces $A_i:\Sigma_i \to
 X$ of $A$, then we get an {\em order $1$ non-repeating
Whitney tower}. 

If it exists, an {\em order $2$ Whitney tower} 
also includes Whitney disks (of {\em order $2$}) pairing all the
intersections between the order $1$ Whitney disks and the order $0$
surfaces. An {\em order $2$ non-repeating Whitney tower} only
requires second order Whitney disks for intersections between an
$A_i$ and Whitney disks pairing $A_j$ and $A_k$, where $i$, $j$
and $k$ are distinct. As explained in Section~\ref{sec:whitney-towers}, all of this generalizes to higher order,
including the distinction between non-repeating and repeating
intersection points, however things get more subtle as different
``types'' of intersections of the same order can appear (parametrized by isomorphism classes of unitrivalent trees). 

An \emph{order $n$ Whitney tower} has Whitney disks pairing up all
intersections of order less than $n$, and an \emph{order $n$ non-repeating Whitney tower}
is only required to have Whitney disks pairing all non-repeating intersections of order less than $n$ (sections~\ref{sec:w-tower-def} and \ref{sec:non-repeating-w-tower-def}).
So ``order $n$ non-repeating'' is a weaker condition than ``order $n$''.

The underlying order $0$ surfaces $A$ in a Whitney tower $\cW$ are said to \emph{support} $\cW$, and we say that $A$ \emph{admits} an order $n$ Whitney tower if $A$ is homotopic (rel boundary) to $A'$ supporting $\cW$ of order $n$.

\subsection{Pulling apart surfaces in $4$--manifolds}\label{sec:intro-pulling-apart}
As a first step towards determining whether or not $A:\Sigma\to X$ can
be pulled apart, we have the following translation of the problem
into the language of Whitney towers. This is the main tool in our theory:
\begin{thm}\label{thm:tower-separates} Let $m$ be the number of components of $\Sigma$. Then
$A:\Sigma\to X$ can be pulled apart if and only if $A$ admits a non-repeating Whitney tower of order $m-1$.
\end{thm}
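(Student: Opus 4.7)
The plan is to prove the two implications separately. The forward direction is essentially vacuous: if $A:\Sigma \to X$ can be pulled apart, then after an ambient homotopy the images $A_i(\Sigma_i)$ are pairwise disjoint, so there are no intersections between distinct components and hence no non-repeating intersections at any order. The resulting immersion trivially supports a non-repeating Whitney tower of every order (with no Whitney disks needed), in particular of order $m-1$.

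The substantive backward direction I would prove by induction on the number of components $m$. The base case $m = 2$ is exactly the Wall argument recalled in Section~\ref{sec:Wall}: an order $1$ non-repeating Whitney tower on $A_1 \amalg A_2$ is a collection of Whitney disks pairing all intersections between $A_1$ and $A_2$, and these disks can be used to pull the two spheres apart by first pushing any intersections between $A_2$ and the disk interiors down into self-intersections of $A_2$ (Figure~\ref{Push-downA1andA2-fig}), and then performing the associated Whitney moves (Figure~\ref{Push-downB1andB2-fig}).

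For the inductive step, suppose the result holds for fewer than $m$ components, and let $\cW$ be an order $m-1$ non-repeating Whitney tower on $A = A_1 \amalg \cdots \amalg A_m$. The strategy is to use the top-order Whitney disks in $\cW$ (those whose associated trees carry a leaf labeled $m$) to separate $A_m$ from the other components by a sequence of push-downs and Whitney moves generalizing the base case. The governing combinatorial observation is that a non-repeating intersection of order $n$ corresponds to a tree with $n+1$ distinctly-labeled leaves, so non-repeating intersections of order $\geq m$ simply cannot occur, which forces every push-down and Whitney move in the procedure to happen at controlled order. After this first stage, $A_m$ has been made disjoint from $A_1, \ldots, A_{m-1}$, and the remaining configuration should inherit an order $m-2$ non-repeating Whitney tower on the $m-1$ remaining components; the inductive hypothesis then finishes the argument.

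The main obstacle is the bookkeeping needed to verify that these geometric operations transform the tree-indexed data of $\cW$ in a controlled way. Specifically, one must check that after the push-downs and Whitney moves used to separate $A_m$, the only new intersections created are either self-intersections of $A_m$ (which do not affect pulling apart) or intersections among $A_1, \ldots, A_{m-1}$ whose trees assemble into a valid order $m-2$ non-repeating tower. This amounts to a careful analysis of how trees transform under push-downs and Whitney moves in the language of Section~\ref{sec:prelim}; the non-repeating hypothesis, combined with the leaf-count bound above, should guarantee that no spurious non-repeating intersections involving $A_m$ reappear, so that the reduction from $m$ to $m-1$ components genuinely goes through.
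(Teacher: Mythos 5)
Your forward direction matches the paper and is fine. The backward direction, however, has a genuine gap, and also contains an arithmetic slip that should be corrected: an order-$n$ unitrivalent tree has $n+2$ univalent vertices, not $n+1$, so the correct consequence of the leaf-count/pigeonhole observation is that non-repeating intersections of order $\geq m-1$ (not $\geq m$) cannot occur among $m$ components. This is precisely the observation that drives the paper's argument.

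The more substantial issue is that you have outsourced the heart of the proof. You propose induction on $m$: separate $A_m$ by ``push-downs and Whitney moves,'' then invoke the inductive hypothesis on the remaining components. But the step you defer --- verifying that these moves do not create any new unpaired non-repeating intersections --- is exactly where the content lies, and you never exhibit a procedure that achieves this, you only assert that the leaf-count bound ``should guarantee'' it. The paper does not use induction at all; it gives a direct, top-down elimination argument whose key step is the following lemma you would need but do not have: if $W_{(I,J)}$ is a Whitney disk of \emph{maximal order} in $\cW$, then every unpaired intersection $p\in W_{(I,J)}\cap W_K$ is repeating (by the leaf-count bound), and moreover at least one of the brackets $(I,K)$ or $(J,K)$ is repeating; hence $p$ can be pushed down into $W_I$ or $W_J$ so as to create only \emph{repeating} intersections, after which the now-clean $W_{(I,J)}$ can be used for a Whitney move. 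Iterating on maximal-order Whitney disks eliminates all Whitney disks while preserving non-repeating order $m-1$, yielding pairwise disjoint surfaces at the end. Your proposal gestures at this but never proves the crucial ``only repeating intersections are created'' claim, nor does it explain how to choose the direction of the push-down.

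Your inductive framing also has an interference problem you do not address: even if you succeed in making $A_m$ disjoint from the rest, the homotopy you subsequently apply to pull apart $A_1,\ldots,A_{m-1}$ is an \emph{ambient} homotopy and may well re-intersect $A_m$ unless you argue that it can be supported away from $A_m$. The paper sidesteps this entirely by treating all components simultaneously. A correct proof along your lines would need to confront both issues; as written, the proposal identifies the right combinatorial fact but leaves the geometric argument --- which is the actual theorem --- as a hope.
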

The existence of a non-repeating Whitney tower of sufficient order
encodes ``pushing down'' homotopies and Whitney moves which lead
to disjointness, as will be seen in the proof of
Theorem~\ref{thm:tower-separates}
given in Section~\ref{sec:thm-tower-separates-proof}. 
It will be clear from the proof of Theorem~\ref{thm:tower-separates} that 
for $1< n<m$ the existence of a non-repeating Whitney tower of order $n$
implies that any $n+1$ of the order $0$ surfaces $A_i:\Sigma_i\to
 X$ can be pulled apart.

\subsection{Higher-order intersection invariants}\label{sec:intro-higher-order-ints}
An immediate advantage of this point of view is that the higher-order
intersection theory of \cite{ST2} can be applied inductively to
increase the order of a Whitney tower or, in some cases, detect
obstructions to doing so. 
The main idea is that to each unpaired
intersection point $p$ in a Whitney tower $\cW$ on $A$ one can
associate a decorated unitrivalent tree $t_p$ which bifurcates
down from $p$ through the Whitney disks to the order 0 surfaces
$A_i$ (Figure~\ref{fig:W-disks-inner-prod-oriented}, also Figure~\ref{bing-hopf-example-tower-and-grope-with-tree}). 
The \emph{order} of $p$ is the number of trivalent vertices
in $t_p$. The univalent vertices of $t_p$ are labeled by the elements $i\in \{1,\dots,m\}$ from the set indexing the $A_i$. 
The edges of $t_p$ are
decorated with elements of the fundamental group $\pi:=\pi_1X$ of
the ambient $4$--manifold $X$. Orientations of $A$ and $X$ determine
vertex-orientations and a sign $\sign(p)\in\{\pm\}$ for $t_p$,
and the {\em order $n$ intersection invariant} $\tau_n(\cW)$ of an order $n$ Whitney tower $\cW$ is defined
as the sum
$$
\tau_n(\cW):=\sum \sign(p) \cdot t_p \in\cT_n(\pi,m)
$$
over all order $n$ intersection points $p$ in $\cW$. Here
$\cT_n(\pi,m)$ is a free abelian group generated by order $n$
decorated trees modulo relations which include the usual
antisymmetry (AS) and Jacobi (IHX) relations of finite type theory
(Figure~\ref{fig:Relations-fig}). 
Restricting to non-repeating intersection points in an order $n$
non-repeating Whitney tower $\cW$, yields the analogous order $n$
{\em non-repeating intersection invariant} $\lambda_n(\cW)$:
$$
\lambda_n(\cW):=\sum \sign(p) \cdot t_p \in\Lambda_n(\pi,m)
$$
which takes values in the subgroup $\Lambda_n(\pi,m)<\cT_n(\pi,m)$
generated by order $n$ trees whose univalent vertices have \emph{distinct}
labels. 
We refer to Definition~\ref{def:tau-lambda} for more precise statements. 
In the following 
we shall sometimes suppress the number $m$ of components of $\Sigma$ and just write $\Lambda_n(\pi)$. 

\begin{rem}\label{rem:intro-target-tree-groups}
We will show in Lemma~\ref{lem:Lambda-computation} that $\Lambda_n(\pi,m)$ is isomorphic to the direct sum of $\binom{m}{n+2}n!$-many copies of the integral group ring $\Z[\pi^{(n+1)}]$ of the $(n+1)$-fold cartesian product $\pi^{(n+1)}=\pi\times\pi\times\cdots\times\pi$.
Note that $\Lambda_n(\pi,m)$ is trivial for $n\geq m-1$ since an order $n$ unitrivalent tree has $n+2$ univalent vertices.
For $\pi$ left-orderable, $\cT_1(\pi,m)$ is computed in \cite[Sec.2.3.1]{S3}. For $\pi$ trivial, $\cT_n(m):=\cT_n(1,m)$ is computed in \cite{CST3} for all $n$, and in \cite{CST1} the torsion subgroup of $\cT_n(m)$ (which is only $2$-torsion) is shown to correspond to obstructions to ``untwisting'' Whitney disks in \emph{twisted Whitney towers} in the $4$--ball. The absence of torsion in
$\Lambda_n(\pi,m)$ corresponds to the fact that such obstructions are not relevant in the non-repeating setting since
a \emph{boundary-twisting} operation \cite[Sec.1.3]{FQ}
can be used to eliminate non-trivially twisted Whitney disks
at the cost of only creating repeating intersections.
\end{rem}

In the case $n=0$, our notation
$\lambda_0(A)\in\Lambda_0(\pi)$ just describes
Wall's Hermitian intersection pairing $\lambda(A_i,A_j)\in\Z[\pi]$
(see section~\ref{sec:order-zero-int-trees}).

For $n=1$, we showed in \cite{ST1} that if
$\lambda_0(A)=0$ then taking
$\lambda_1(A):=\lambda_1(\cW)$ in an appropriate quotient of $\Lambda_1(\pi)$ defines a homotopy invariant of  $A$ (independent of the
choice of non-repeating Whitney tower $\cW$). See sections~\ref{sec:order-one-int-trees} and \ref{sec:order-1-INT}.

The main open problem in this intersection theory is to determine for $n\geq 2$ the largest quotient of $\Lambda_n(\pi)$ for which $\lambda_n(\cW)$ only depends on the homotopy class of $A:\Sigma\to X$. Even for $n=1$, this quotient will generally depend on $A$, unlike Wall's invariants $\lambda_0$.

\subsection{The geometric obstruction theory}
In Theorem~2 of \cite{ST2} it was shown that the vanishing of
$\tau_n(\cW)\in\cT_n(\pi)$ implies that $A$ admits an
order $n+1$ Whitney tower. The proof of this result uses controlled geometric realizations of the relations in 
$\cT_n(\pi)$, and the exact same constructions (which are all homogeneous in the univalent labels -- see Section~4 of \cite{ST2}) give the analogous result in the non-repeating setting:

\begin{thm}\label{thm:non-rep-obstruction-theory}
If $A:\Sigma\to X$ admits a
non-repeating Whitney tower $\cW$ of order~$n$
with $\lambda_n(\cW)=0\in\Lambda_n(\pi)$, then $A$ admits an
order $(n+1)$ non-repeating Whitney tower.$\hfill\square$
\end{thm}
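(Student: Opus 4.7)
The plan is to adapt the proof of Theorem~2 of \cite{ST2}, which establishes the analogous statement for the full intersection invariant $\tau_n(\cW)\in\cT_n(\pi)$. The reason this works is essentially bookkeeping: every local geometric move used to realize the relations defining $\cT_n(\pi)$ (antisymmetry, Jacobi/IHX, and the framing/INT relations) has the property that each newly introduced order $n$ tree has its univalent labels drawn from the \emph{same} multiset of labels as the trees in the relation being realized. Hence, if one begins with a formal sum of trees with pairwise distinct labels, the controlled geometric realization stays inside the non-repeating part of the tower, and produces only non-repeating intersections of order $n$ together with (possibly repeating) intersections of order~$>n$.

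Concretely, the steps I would carry out are the following. First, write $\lambda_n(\cW) = 0 \in \Lambda_n(\pi)$ as a finite sum of defining relations among trees with distinct univalent labels. Second, for each such relation, apply the geometric realization constructions of \cite[Sec.~4]{ST2}: controlled finger moves and Whitney moves produce, in a small regular neighborhood, new order $n$ intersection points whose signed trees realize that relation. Since the constructions are homogeneous in the univalent labels, only non-repeating order $n$ trees are introduced, apart from higher-order and repeating debris that does not affect $\lambda_n$. After performing all such realizations, the order $n$ non-repeating intersection points of the resulting tower $\cW'$ cancel in \emph{unsigned} pairs: algebraically cancelling pairs can be geometrically cancelled by introducing order $(n+1)$ Whitney disks pairing them, either directly or after a preliminary regular homotopy (finger move) that creates the required cancelling partner. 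This is the standard pairing-up step in the Whitney tower obstruction theory. The outcome is a non-repeating Whitney tower of order $n+1$, homotopic rel boundary to the original $A$.

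The one place where the non-repeating case genuinely diverges from the $\tau_n$ argument, and which deserves explicit care, is framing. In the full theory one must worry about twisted Whitney disks, which give rise to $2$-torsion obstructions living in $\cT_n$. Here this issue evaporates: any twisted Whitney disk $W_J$ pairing $W_I$ and $W_K$ can be boundary-twisted to remove its twisting, at the cost of creating intersections between $W_J$ and $W_I$ (or $W_K$). The associated trees have two univalent vertices with the same label (inherited from the common sub-labels of $I$ and $J$), so they are \emph{repeating} and lie outside $\Lambda_n(\pi)$. This is precisely the phenomenon flagged in Remark~\ref{rem:intro-target-tree-groups}, and it both explains the absence of torsion in $\Lambda_n(\pi,m)$ and lets the argument proceed without any framing correction.

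The main obstacle, and the point that requires the most attention, is verifying that each local construction from \cite[Sec.~4]{ST2} is genuinely homogeneous in the univalent labels, so that the non-repeating property is preserved at every step. In particular, the IHX realization introduces an auxiliary parallel copy of a sheet and new Whitney disks; one must check that the labels appearing on the resulting order $n$ trees are exactly those on the three trees in the IHX relation, not a larger label set. Once this label-tracking is confirmed for each relation (AS, IHX, and the framed/boundary-twist relations), the rest of the proof is a direct transcription of the $\tau_n$ argument with $\cT_n(\pi)$ replaced by $\Lambda_n(\pi)$ throughout.
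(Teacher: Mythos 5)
Your proposal is correct and takes essentially the same approach as the paper, which in fact gives no independent proof but simply cites Theorem~2 of \cite{ST2} with the remark that its geometric realization constructions are homogeneous in univalent labels and hence preserve the non-repeating condition. Your observation that the framing/torsion subtlety evaporates in the non-repeating setting (because boundary-twisting only produces repeating intersections) is precisely what the paper flags in Remark~\ref{rem:intro-target-tree-groups} and Section~\ref{sec:order-one-int-trees}; your flagging of the label-tracking in the IHX realization as the step deserving explicit verification is a reasonable place to be careful, and is exactly the homogeneity claim the authors assert but defer to \cite[Sec.~4]{ST2}.
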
 

Combining Theorem~\ref{thm:non-rep-obstruction-theory} with Theorem~\ref{thm:tower-separates} above yields the following result, which was announced in \cite[Thm.3]{ST2}:
\begin{cor}
\label{cor:tower=disjoint} 
If $\Sigma$ has $m$ components and $A:\Sigma\to X$ admits a
non-repeating Whitney tower $\cW$ of order~$m-2$ such that
$\lambda_{m-2}(\cW)=0\in\Lambda_{m-2}(\pi)$, then 
$A$ can be pulled apart.$\hfill\square$
\end{cor}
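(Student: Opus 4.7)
The plan is simply to chain Theorem~\ref{thm:non-rep-obstruction-theory} with Theorem~\ref{thm:tower-separates}, so this corollary is a direct consequence of the two preceding results and there is essentially no hard step; the main thing is to get the numerology of orders versus component counts right.

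First I would apply Theorem~\ref{thm:non-rep-obstruction-theory} to the given non-repeating Whitney tower $\cW$ of order $m$. Since by hypothesis $\lambda_m(\cW)=0\in\Lambda_m(\pi)$, that theorem produces a map $A':\Sigma\to X$ homotopic (rel boundary) to $A$ which admits a non-repeating Whitney tower $\cW'$ of order $m+1$.

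Next I would apply Theorem~\ref{thm:tower-separates} to $A'$. By hypothesis $\Sigma$ has $m+2$ components, so the threshold appearing in Theorem~\ref{thm:tower-separates} is $(m+2)-1 = m+1$, which is precisely the order of $\cW'$. Hence $A'$ can be pulled apart. Since being pullable apart is a property of the homotopy class, $A$ can be pulled apart as well.

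The only thing to double-check is that Theorem~\ref{thm:non-rep-obstruction-theory} is genuinely applicable in this corollary's setup, i.e.\ that the regular homotopy it produces does not spoil the count of components or the component-indexing on which $\lambda_m$ and the non-repeating condition are defined. This is immediate, since the theorem only modifies $A$ by a homotopy and the notion of non-repeating is indexed by the connected components of $\Sigma$, which are preserved. So no genuine obstacle arises, and the proof reduces to the two-line composition above.
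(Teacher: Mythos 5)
Your argument is exactly the paper's intended one: the paper states the corollary as an immediate consequence of Theorem~\ref{thm:non-rep-obstruction-theory} (applied with $n=m$) and Theorem~\ref{thm:tower-separates} (applied with $m+2$ components, threshold $(m+2)-1=m+1$), and your numerology matches. Your remark that homotopy preserves the component structure of $\Sigma$, so the two theorems chain without issue, is the right sanity check.
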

Thus, the problem of deciding whether or not any given $A$ can be pulled apart 
can be attacked inductively by determining the extent to which $\lambda_n(\cW)$ only depends on the homotopy class of $A$.  


The next two subsections describe settings where $\lambda_n(\cW)\in\Lambda_n(\pi)$
does indeed tell the whole story. For Whitney towers in simply connected $4$--manifolds, we drop $\pi$ from the notation, writing $\Lambda_n(m)$, or just $\Lambda_n$ if the number of order $0$ surfaces is understood.

\subsection{Pulling apart disks in the 4--ball}
A \emph{link-homotopy} of an $m$-component link $L=L_1\cup
L_2\cup\dots\cup L_m$ in the $3$--sphere is a homotopy of
$L$ which preserves disjointness of the link components, i.e.~during 
the homotopy only self-intersections of the $L_i$ are
allowed. In order to study ``linking modulo knotting'',  Milnor
\cite{M1} introduced the equivalence relation of link-homotopy
  and defined his (non-repeating) $\mu$-invariants, showing in particular that a
link is link-homotopically trivial if and only if it has all
vanishing $\mu$-invariants. In the setting of link-homotopy,
Milnor's algebraically defined $\mu$-invariants are intimately
connected to non-repeating intersection invariants as implied by the following result, proved in Section~\ref{sec:link-htpy-thm-proof}
using a new notion of \emph{Whitney tower-grope duality} (Proposition~\ref{prop:w-tower-duality}).

\begin{thm}\label{thm:lambda-link-htpy}
Let $L$ be an $m$-component link in $S^3$ bounding
$D: \amalg^m D^2 \to B^4$. If $D$ admits an order $n$ non-repeating Whitney tower $\cW$ then $\lambda_n(\cW)\in\Lambda_n(m)$ does not depend on the choice of $\cW$. In fact, $\lambda_n(\cW)$ contains the same information as all non-repeating Milnor invariants of length $n+2$ and it is therefore a link-homotopy invariant of $L$.
\end{thm}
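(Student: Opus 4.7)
The plan is to set up a direct dictionary between the non-repeating intersection trees of $\cW$ and the non-repeating Milnor invariants $\bar\mu(L)$ of length $n+2$, and then to invoke the classical fact that these $\bar\mu$-invariants are link-homotopy invariants of $L$.

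The key geometric step is an application of Whitney tower-grope duality (Proposition~\ref{prop:w-tower-duality}) to convert $\cW$ into a family of non-repeating capped gropes in $B^4$. For each unpaired order $n$ intersection point $p\in\cW$ with tree $t_p$ whose univalent vertices carry distinct labels $i_0,i_1,\ldots,i_{n+1}$, one extracts an embedded class $(n{+}2)$ grope $G_p$ whose bottom-stage surface is attached along a meridian of $D_{i_0}$ and whose caps pierce $D_{i_1},\ldots,D_{i_{n+1}}$ transversely once each, one cap per leaf of $t_p$. The combinatorial shape of $G_p$ is dictated by the tree type of $t_p$, so the bottom-stage boundary realizes, up to the sign $\sign(p)$ and modulo commutators of higher length, the iterated commutator of meridians $x_{i_1},\ldots,x_{i_{n+1}}$ bracketed according to $t_p$.

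Assembling these contributions I would read off each longitude $\ell_{i_0}\in\pi_1(S^3\setminus L)$ in the reduced free Milnor group on $x_1,\ldots,x_m$: modulo commutators of length $n+3$ and modulo the link-homotopy relations that kill any bracket containing a repeated letter, the image of $\ell_{i_0}$ equals the iterated bracket obtained by summing $\sign(p)\cdot t_p$ over those order $n$ intersections whose root is labeled $i_0$. Since $\pi_1B^4$ is trivial, $\Lambda_n(m)$ is the free abelian group on non-repeating order $n$ trees modulo AS and IHX, and these are precisely the antisymmetry and Jacobi relations governing the reduced free Lie algebra on the meridians. The assignment $\lambda_n(\cW)\mapsto\{\bar\mu_L\text{ of length }n+2\}$ is therefore an isomorphism between $\Lambda_n(m)$-valued data and the standard receptacle for length-$(n+2)$ non-repeating Milnor invariants.

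Because Milnor's non-repeating $\bar\mu$-invariants of length $n+2$ are classical link-homotopy invariants of $L$, depending neither on the bounding disks $D$ nor on $\cW$, the isomorphism above shows that $\lambda_n(\cW)$ is independent of $\cW$ and is a link-homotopy invariant of $L$ carrying exactly the information of these Milnor invariants. The main obstacle is the first step: setting up Whitney tower-grope duality precisely enough to read the Lie-bracket shape of the grope boundary off the tree $t_p$, and matching the cap data with single meridians carrying the correct signs. Once this geometric dictionary is in place, the algebraic identification with $\bar\mu$ follows from standard Magnus expansion/Milnor calculus in the reduced free Lie algebra.
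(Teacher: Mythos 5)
Your proposal captures the broad outline — resolve the Whitney tower to gropes, read the longitudes as iterated commutators, and identify with Milnor's non-repeating $\mu$-invariants — but it conflates two distinct tools and, as a result, skips the step that actually makes the argument go through.

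First, a terminological but symptomatic confusion: Proposition~\ref{prop:w-tower-duality} (Whitney tower-grope duality) is \emph{not} the device that converts $\cW$ into gropes. That conversion is the Whitney tower-to-grope construction of \cite{S1} (sketched in Section~\ref{subsec:w-tower-to-grope-review}), which turns the preferred disk $D_i$ together with all Whitney disks whose trees carry an $i$-labeled vertex into a single class $n+1$ grope $G_i$ with bottom stage bounded by $L_i$; the tips of $G_i$ are meridians to the other $D_j$. Note the construction produces one grope per index $i$, not one grope per unpaired intersection $p$, and the bottom stage is the resolved $D_i$, not something attached along a meridian of $D_{i_0}$ — your description of the geometry is off on both counts.

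The more serious gap is that you compute $\ell_{i_0}$ ``in the reduced free Milnor group on $x_1,\ldots,x_m$'' as if the grope were reading off the longitude directly in $\pi_1(S^3\setminus L)$. It is not: the grope $G_i$ lives in $B^4\setminus\cW^i$, where $\cW^i$ is what remains of $\cW$ after deleting $D_i$ and the Whitney disks used to build $G_i$, and the commutator expression it exhibits is an identity in $\pi_1(B^4\setminus\cW^i)$. To conclude anything about Milnor invariants you must show that the inclusion $S^3\setminus\partial D^i\hookrightarrow B^4\setminus\cW^i$ induces an isomorphism on Milnor groups modulo the $(n+2)$th lower central series term. This is exactly where Proposition~\ref{prop:w-tower-duality} is really used: by Dwyer's theorem, the kernel of the inclusion on $\pi_1$ is controlled by surfaces generating $H_2(B^4\setminus\cW^i)$, which is Alexander dual to $H_1(\cW^i,\partial D^i)$; Whitney tower-grope duality produces gropes of class $n+2$ in the complement dual to those generators, showing the potentially new relations are trivial modulo $(n+2)$-fold commutators. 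Without this step, the identification of the grope-commutator with the actual longitude $\ell_i$ of $L$ has no justification — you have only an expression in the wrong fundamental group. You should also be aware that the cleanup in Step (i) of the paper's proof (eliminating all repeating Whitney disks by pushing down) is needed to make the non-repeating tower behave like an honest order $n$ tower for the purposes of the grope resolution; this is not automatic.
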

We refer to Theorem~\ref{thm:mu} for a precise statement on how Milnor's invariants are related to
$\lambda_n(L):=\lambda_n(\cW)\in\Lambda_n(m)$. 
 Together with
Corollary~\ref{cor:tower=disjoint} we get the following result:
\begin{cor}\label{cor:lambda(L)}An $m$-component link $L$ is
link-homotopically trivial if and only if $\lambda_n(L)$ vanishes
for all $n=0,1,2,\ldots,m-1$. $\hfill\square$
\end{cor}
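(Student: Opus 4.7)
The plan is to combine the three preceding theorems with the classical equivalence that an $m$-component link $L \subset S^3$ is link-homotopically trivial if and only if $L$ bounds a pairwise disjoint immersed disk map $D: \amalg^m D^2 \to B^4$. One direction of this equivalence comes from tracing a null-link-homotopy through $S^3 \times [0,1] \subset B^4$ and capping off the resulting unlink by disjoint flat disks; the other direction is Giffen's shift-spinning (or Goldsmith's) construction. The three named theorems do the rest: Theorem \ref{thm:lambda-link-htpy} gives well-definedness and link-homotopy invariance of $\lambda_n(L)$; Theorem \ref{thm:non-rep-obstruction-theory} raises the order of a non-repeating Whitney tower by one whenever $\lambda_n$ vanishes; and Theorem \ref{thm:tower-separates} converts an order $m-1$ non-repeating Whitney tower on $D$ into an honest pulling apart.

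For the forward direction, if $L$ is link-homotopically trivial then it bounds a pairwise disjoint $D$, which is trivially an order $n$ non-repeating Whitney tower $\cW$ for every $n$, with the sum defining $\lambda_n(\cW)$ empty. Hence $\lambda_n(L) = \lambda_n(\cW) = 0$ for every $n$, and in particular for $n = 0, 1, \ldots, m-1$.

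For the reverse direction, I would start with any generically immersed $D : \amalg^m D^2 \to B^4$ bounding $L$, which is automatically an order $0$ non-repeating Whitney tower. Inductively, suppose $D$ has been homotoped rel $L$ so that it supports an order $n$ non-repeating Whitney tower $\cW_n$, for some $0 \le n \le m-2$. By Theorem \ref{thm:lambda-link-htpy} applied to this homotopic representative, $\lambda_n(\cW_n) = \lambda_n(L) \in \Lambda_n(m)$, which vanishes by hypothesis. Theorem \ref{thm:non-rep-obstruction-theory} then furnishes (after a further homotopy rel $L$) an order $n+1$ non-repeating Whitney tower. Iterating from $n = 0$ through $n = m-2$ produces an order $m-1$ non-repeating Whitney tower on some $D$ homotopic to the original, whereupon Theorem \ref{thm:tower-separates} yields that $D$ can be pulled apart, so $L$ is link-homotopically trivial.

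The work here is essentially bookkeeping once the three named theorems are in hand; the only point requiring genuine care is the reliance on Theorem \ref{thm:lambda-link-htpy} at each inductive step, so that the single hypothesis $\lambda_n(L) = 0$ controls the intersection invariant on the successively different representatives produced by the homotopies. I also note that the hypothesis $\lambda_{m-1}(L) = 0$ is never actually used: the induction terminates after invoking $\lambda_0, \ldots, \lambda_{m-2}$, and the corollary is stated with the slightly redundant upper index purely for symmetry with the range of orders at which a non-repeating Whitney tower can obstruct pulling apart.
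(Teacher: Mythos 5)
Your argument is correct and matches the paper's intended proof: the paper derives the corollary by combining Theorem~\ref{thm:lambda-link-htpy} with Corollary~\ref{cor:tower=disjoint} (itself just the composite of Theorems~\ref{thm:non-rep-obstruction-theory} and~\ref{thm:tower-separates}) and the Giffen--Goldsmith equivalence between link-homotopy triviality and bounding disjoint immersed disks in $B^4$, exactly as you describe. Your closing observation that the $n=m-1$ hypothesis is vacuous is also right, since $\Lambda_{m-1}(m)=0$ by Lemma~\ref{lem:Lambda-computation}.
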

This recovers Milnor's characterization of links which are link-homotopically trivial \cite{M1}, and uses the fact that $L$
bounding disjointly immersed disks into $B^4$ is equivalent to $L$ being
link-homotopically trivial \cite{Gi,Go}.

\begin{rem} A precise description of the relationship
between general (repeating) Whitney towers on $D$ and
Milnor's $\mu$-invariants (with repeating indices
\cite{M2}) for $L$ is given in \cite{CST2}. Our current discussion is both easier and harder at the same time: We only make a statement about non-repeating Milnor invariants, a subset of all Milnor invariants, but as an input we only use a non-repeating Whitney tower, an object containing less information then a Whitney tower.
\end{rem}

\subsection{Pulling apart $2$--spheres in special $4$--manifolds}\label{sec:intro-simply-connected-$4$--manifolds}
The relationship between Whitney towers and Milnor's link invariants can be used to describe some more general settings where the non-repeating intersection invariant $\lambda_n(\cW)\in\Lambda_n$ of a non-repeating Whitney tower gives homotopy invariants of the underlying order $0$ surfaces.
Denote by $X_L$ the $4$--manifold which is gotten by attaching $0$-framed $2$--handles to the $4$--ball along 
a link $L$ in the $3$--sphere. The following theorem is proved in Section~\ref{sec:proof-thm-in-X-L}:
\begin{thm}\label{thm:in-X-L}
If a link $L$ bounds an order $n$ Whitney tower on disks in the $4$--ball, then:
\begin{enumerate}
\item Any map $A:\amalg^m S^2\to X_L$ of $2$--spheres into $X_L$ admits an order $n$
Whitney tower.
\item For any order $n$ non-repeating Whitney tower $\cW$ supported by $A:\amalg^m S^2\to X_L$, the non-repeating intersection invariant $\lambda_n(A):=\lambda_n(\cW)\in\Lambda_n(m)$ is independent of the choice of $\cW$. 
\end{enumerate}
\end{thm}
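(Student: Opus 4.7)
The plan is to exploit the simple topology of $X_L$: as $X_L$ is built from $B^4$ by attaching $0$-framed $2$--handles to $L$, it is simply connected with $\pi_2(X_L) \cong H_2(X_L) \cong \Z^m$, where $m$ is the number of components of $L$. A generating set is given by the immersed $2$--spheres $S_j := C_j \cup D_j$, where $C_j$ is the core of the $j$-th $2$--handle and $D = \amalg_j D_j$ is the given order zero system of immersed disks in $B^4$ supporting an order $n$ Whitney tower $\cW_0$ with $\partial D = L$. After a homotopy, any collection $A$ of $2$--spheres in $X_L$ is in \emph{standard form}: each $A_i$ is a connected sum $\#_j\, k_{ij}\, S_j$ of signed parallel copies of the $S_j$'s tubed together in a collar of $L$, with the integers $k_{ij}$ determined by the homotopy class of $A$.

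For part (1), I would take parallel pushoffs of every component of $\cW_0$: the $0$-framed pushoffs of the $D_j$ and the pushoffs of all Whitney disks inherit the pairing structure of $\cW_0$, producing an order $n$ Whitney tower on the parallel copies of $D$ in $B^4$. Tubing these with parallel pushoffs of the cores $C_j$---which are mutually disjoint outside a collar of $L$, with the $0$-framing ensuring that parallel copies of a single core remain disjoint---yields an order $n$ Whitney tower on the standard-form representative, hence on $A$ up to homotopy.

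For part (2), I would reduce the independence of $\lambda_n(\cW)$ in $X_L$ to the link-homotopy invariance statement for disks in $B^4$ provided by Theorem~\ref{thm:lambda-link-htpy}. Given an order $n$ non-repeating Whitney tower $\cW$ on $A$, first homotope $A$ to standard form, carrying $\cW$ along. Then push every Whitney disk of $\cW$ off the cores $C_j$: since each copy of $C_j$ used in the standard form is part of a single specific component $A_i$, this pushoff only creates self-intersections within that $A_i$, i.e.\ repeating intersections invisible to $\lambda_n$. The result is a non-repeating Whitney tower $\hat\cW$ on $P := \amalg_i P_i \subset B^4$, where $P_i := A_i \cap B^4$ is a planar surface whose boundary is a sub-link $\hat L_i$ of the partitioned cabled link $\hat L := \partial P \subset S^3$. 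A partitioned-cabling refinement of Theorem~\ref{thm:lambda-link-htpy}, obtained by applying Proposition~\ref{prop:w-tower-duality} relative to the partition of $\hat L$ by the $\hat L_i$'s, then shows that $\lambda_n(\hat\cW) \in \Lambda_n(m)$ is a link-homotopy invariant of the partitioned link $\hat L$. Since $\hat L$ and its partition are determined by $L$ and the integers $k_{ij}$---both of which depend only on the homotopy class of $A$---we conclude $\lambda_n(\cW) = \lambda_n(\hat\cW)$ is independent of $\cW$.

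The hardest step will be justifying the partitioned refinement of Theorem~\ref{thm:lambda-link-htpy}: the stated theorem assigns one disk per link component, whereas here each label $i \in \{1,\dots,m\}$ labels a planar surface with several boundary components in $\hat L$. One must verify that the identification of $\lambda_n$ with non-repeating Milnor invariants in Theorem~\ref{thm:mu} survives collapse to the coarse partition---trees which become repeating after collapse must vanish consistently with the definition of $\lambda_n$ on $A$, and non-repeating trees must pull back to the correct $\mu$-invariants of the cabled link. A secondary issue is the controlled pushoff of Whitney disks off the cores in part (2), which must not spoil the pairing structure of $\cW$ at higher orders; here one uses that each $2$--handle is a $4$--ball, so such pushoffs are local moves creating only controlled (repeating) intersections.
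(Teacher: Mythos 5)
Your part (1) matches the paper's proof in substance: both use parallel copies of the given Whitney tower in $B^4$ together with $2$--handle cores to build a tower on the standard-form representative. That direction is fine.

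For part (2) your route diverges from the paper's and has two genuine gaps. First, ``homotope $A$ to standard form, carrying $\cW$ along'' is not a legitimate move: a homotopy of the order zero surfaces does not in general carry a Whitney tower with it, and in fact the nontrivial content of part (2) is precisely to compare towers on homotopic immersions. The paper circumvents this by never transporting $\cW$; it instead isotopes $\cW'$ so that its intersection with each handle is a union of parallel core disks, and it keeps the original reference tower $\cW_L\cup(\text{cores})$ untouched. Second, your reduction requires a ``partitioned-cabling refinement of Theorem~\ref{thm:lambda-link-htpy}'' -- that $\lambda_n(\hat\cW)$ is a link-homotopy invariant of the cabled link $\hat L$ equipped with the partition induced by the $A_i$. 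You correctly flag this as the hardest step, but as stated it is an unproved theorem, and it has a further complication you do not address: the cable $\hat L$ is \emph{not} determined by $L$ and the multiplicities $k_{ij}$, because a different choice of $\cW$ may use extra cancelling pairs of handle-core copies, changing $\hat L$ by adding oppositely-oriented parallel components. So even the well-definedness of your target (a partitioned Milnor invariant of $\hat L$) would require a separate cancellation argument along the lines of Lemma~\ref{lem:canceling-parallels}.

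The paper avoids both problems with a different and cleaner device. It only needs Theorem~\ref{thm:lambda-link-htpy} in the form of a vanishing statement: any order $n$ non-repeating Whitney tower $\cV$ on $2$--spheres in $S^4$ has $\lambda_n(\cV)=0$ (tube the spheres into disjointly embedded disks bounded by an unlink, reducing to the triviality of Milnor invariants of the unlink). Then, given any $\cW'$ on $A'\simeq A$, it pushes the singularities of $\cW'$ off the handles, cuts along $\partial B$, caps off with a tower $\cW^+\subset B^+$ built from parallels of the reference tower $\cW_L$, and obtains a non-repeating tower $\cV=\cW^+\cup\mathring\cW'$ on spheres in $S^4=B\cup_{S^3}B^+$. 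A direct tree count, together with the cancelling-parallel lemma, gives $\lambda_n(\cV)=\lambda_n(\cW_L)-\lambda_n(\cW')$, and the vanishing in $S^4$ finishes the proof. This side-steps any need for a partitioned Milnor theory or for transporting Whitney towers along homotopies. You would need to either adopt this gluing-in-$S^4$ strategy or actually develop the partitioned cabling version of Theorem~\ref{thm:lambda-link-htpy}, including the cancelling-parallel analysis, before your argument is complete.
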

Note that the number $m$ of $2$--spheres need not be equal to the number of components of the link $L$. 
Using the realization techniques for Whitney towers in the $4$--ball described in \cite[Sec.3]{CST1}, examples of such $A$ realizing any value in $\Lambda_n(m)$ can be constructed.
\begin{cor}\label{cor:X-L}
If a link $L$ bounds an order $n$ Whitney tower on disks in the $4$--ball, then:
\begin{enumerate}
\item
$A:\amalg^m S^2\to X_L$ admits an order $n+1$ non-repeating Whitney tower if and only if $\lambda_n(A)=0\in\Lambda_n(m)$. 

\item
In the case $m=n+2$, we have that $A:\amalg^m S^2\to X_L$ can be pulled apart if and only if $\lambda_{m-2}(A)=0\in\Lambda_{m-2}(m)$. $\hfill\square$

\end{enumerate}
\end{cor}
The ``if'' parts of the statements in Corollary~\ref{cor:X-L} follow from Theorem~\ref{thm:non-rep-obstruction-theory} and Corollary~\ref{cor:tower=disjoint} above. 
 The ``only if'' statements follow from the fact that the second statement of Theorem~\ref{thm:in-X-L} implies that $\lambda_n(A):=\lambda_n(\cW)\in\Lambda_n(m)$ only depends on the homotopy class of $A$, as explained in Proposition~\ref{prop:fixed-immersion} below.
In this setting, Kojima \cite{Koj} had identified (via Massey products) the first non-vanishing Milnor invariant $\mu_L(123\cdots m)$ of an $m$-component link $L$ as an obstruction to pulling apart the collection of $m$ $2$--spheres determined by $L$ in $X_L$.

\subsection{Indeterminacies from lower-order intersections.}\label{sec:intro-INT-indeterminacies}
The sufficiency results of Theorem~\ref{thm:non-rep-obstruction-theory}
and Corollary~\ref{cor:tower=disjoint} show that the groups $\Lambda_n(\pi)$
provide upper bounds on the invariants needed for a complete obstruction
theoretic answer to the question of whether or not $A:\Sigma\to X$ can be pulled apart. 
And as illustrated by
Theorem~\ref{thm:lambda-link-htpy} and Theorem~\ref{thm:in-X-L} above, 
there are settings in which $\lambda_n(\cW)\in\Lambda_n$ 
only depends on the homotopy class (rel boundary) of $A$, sometimes giving the complete obstruction 
to pulling $A$ apart.

In general however, more relations are needed in the target group
to account for indeterminacies in the choices of possible Whitney
towers on a given $A$. In particular, for Whitney
towers in a $4$--manifold $X$ with non-trivial second homotopy group $\pi_2X$, there can be
indeterminacies which correspond to tubing the interiors of
Whitney disks into immersed $2$--spheres. 
Such INT \emph{intersection relations} are, in principle, inductively
manageable in the sense that they are determined by strictly lower-order 
intersection invariants on generators of $\pi_2X$. 
For instance, the INT$_1$ relations in the target groups of the order $1$
invariants $\tau_1$ and $\lambda_1$ of \cite{Ma,ST1}
are determined by the order $0$ intersection  
form 
on $\pi_2X$.
However, as we
describe in Section~\ref{sec:order2-INT}, higher-order INT relations can be non-linear, and if one wants the
resulting target to carry exactly the obstruction to the
existence of a higher-order tower then interesting subtleties already arise in the order $2$ setting.

It is interesting to note that these INT indeterminacies are generalizations of
the Milnor-invariant indeterminacies in that they may involve intersections between $2$--spheres
\emph{other} than the $A_i$. The Milnor link-homotopy invariant indeterminacies come from sub-links
because there are no other essential 2-spheres in $X_L$. 
For instance, the proof of Theorem~\ref{thm:stable-separation} exploits the hyperbolic
summands of the stabilized intersection form on $\pi_2$. We pause here to note another positive consequence of the intersection indeterminacies before returning to further
discussion of the well-definedness of the invariants.

\subsubsection{Casson's separation lemma}\label{subsubsec:Casson-sep-lemma} 
The next theorem shows that in the presence of
algebraic duals for the order $0$ surfaces $A_i$, all our higher-order obstructions vanish.
This recovers the following result of Casson (proved algebraically in the
simply-connected setting \cite{Casson}) and Quinn (proved using
transverse spheres \cite{Edwards,Quinn}):
\begin{thm}\label{thm:duals}
If $\lambda(A_i,A_j)=0$ for all $i\neq j$, and there exist 
2--spheres $B_i:S^2\to X$ such that
$\lambda(A_i,B_j)=\delta_{ij}$ for all $i$, then 
$A_i$ can be pulled apart.
\end{thm}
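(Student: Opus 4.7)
The plan is to construct, inductively, a non-repeating Whitney tower of order $m-1$ on $A$ and then invoke Theorem~\ref{thm:tower-separates}. The inductive step uses Theorem~\ref{thm:non-rep-obstruction-theory} to climb the tower, and the dual spheres $B_i$ are used to kill the obstruction $\lambda_n(\cW)\in\Lambda_n(\pi,m)$ at each stage via a controlled tubing construction.

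The first preliminary is to upgrade the algebraic duals to geometric duals. Since $\lambda(A_i,B_j)=0$ for $i\neq j$, the intersections of $B_j$ with $A_i$ cancel algebraically and can be paired by Whitney disks; Whitney moves on the $B_j$ (which introduce self-intersections of the $B_j$ and intersections among the distinct $B_k$'s, none of which will matter in what follows) then eliminate them. A similar cleanup reduces $B_i\cap A_i$ to a single transverse point. We may therefore assume $B_i\cap A_j=\delta_{ij}$ as sets.

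Next we build the tower inductively. The hypothesis $\lambda_0(A)=0$ gives a non-repeating Whitney tower $\cW_1$ of order $1$ by pairing intersections between distinct $A_i,A_j$. Suppose inductively that a non-repeating Whitney tower $\cW_n$ of order $n$ on $A$ exists, with $1\le n\le m-2$. By Theorem~\ref{thm:non-rep-obstruction-theory}, to produce an order-$(n+1)$ tower it suffices to modify $\cW_n$ (by a homotopy of $A$) so that $\lambda_n(\cW_n)=0\in\Lambda_n(\pi,m)$. The key operation is to tube an interior point of an order-$n$ Whitney disk $W$ of $\cW_n$ into one of the geometric duals $B_i$, for a label $i$ not appearing among the leaves of the rooted tree $T$ associated to $W$. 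Because $B_i$ meets the order-$0$ surfaces only at the single point $B_i\cap A_i$, this tubing produces exactly one new order-$n$ intersection with an order-$0$ surface, namely with $A_i$, and its associated tree is $T$ with a leaf labeled $i$ attached at the root. All other new intersections introduced by the tubing involve the auxiliary spheres $B_j$ or self-intersections of $B_i$ and so do not contribute to the non-repeating invariant $\lambda_n$. The sign and $\pi_1$-decoration of the new intersection can be prescribed by choosing the tubing arc and its framing appropriately, so the operation adds any desired element of the form $g\cdot(T\cdot i)$ to $\lambda_n(\cW_n)$.

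Finally, observe that these modifications span $\Lambda_n(\pi,m)$: every generator has $n+2\le m$ distinct leaves, and removing any chosen leaf $i$ yields an order-$(n-1)$ rooted tree on a set of $n+1$ distinct labels. Using the geometric AS and IHX moves of Section~4 of \cite{ST2}, the Whitney disks of $\cW_n$ can be altered so as to realize any such rooted tree. Thus $\lambda_n(\cW_n)$ can be set to zero, and the induction continues until we reach $\cW_{m-1}$; Theorem~\ref{thm:tower-separates} then concludes. The main obstacle in this plan is the final spanning claim — verifying that every desired rooted tree of order $n-1$ can in fact be arranged as the rooted tree of some order-$n$ Whitney disk in $\cW_n$ — and this is precisely where the non-repeating hypothesis is used essentially, since each tubing requires a dual $B_i$ whose label $i$ is disjoint from the labels already present in the tree, and such an $i$ is available as long as $n+2\le m$.
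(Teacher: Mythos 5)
Your overall strategy is the same as the paper's: induct on the order of a non-repeating Whitney tower on $A$, use the controlled realization of the relations (the geometric AS/IHX moves of Section~4 of \cite{ST2}) to create new clean Whitney disks with prescribed rooted trees, tube a dual sphere $B_i$ into such a disk to add an arbitrary generator $\pm\langle J,i\rangle_g$ to $\lambda_n(\cW)$, kill $\lambda_n(\cW)\in\Lambda_n(\pi,m)$, and finish by Theorem~\ref{thm:non-rep-obstruction-theory} and Theorem~\ref{thm:tower-separates}. That is the paper's proof.

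However, your ``preliminary'' step --- upgrading the algebraic duals $B_i$ to geometric duals with $B_i\cap A_j=\delta_{ij}$ as point sets --- is a genuine gap. You assert that the Whitney moves used to eliminate $B_j\cap A_i$ (for $i\neq j$) ``introduce self-intersections of the $B_j$ and intersections among the distinct $B_k$'s, none of which will matter.'' This is false in general: the interiors of the Whitney disks pairing $B_j\cap A_i$ typically meet the other surfaces $A_k$, and the Whitney move then re-creates intersections between $B_j$ and those $A_k$; pushing the interior intersections down first either produces new $B_j\cap A_k$ pairs (so no net progress) or new $A_i\cap A_k$ pairs (changing the $A$'s, contrary to your claim). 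There is no evident way to terminate this process --- and indeed, if geometric duals were obtainable this cheaply, the whole theorem would be an immediate consequence of the Norman trick, which is not how Casson and Quinn proved it. The paper sidesteps the issue entirely: it tubes $B_i$ into a freshly created \emph{clean} order-$n$ Whitney disk $W_J$, and then notes that the intersections of $B_i$ with $A_j$ for $j\neq i$ contribute canceling pairs of order-$n$ trees to $\lambda_n(\cW)$, precisely because $\lambda(A_j,B_i)=0\in\Z[\pi]$. In other words, the algebraic cancellation is used directly at the level of the invariant, so geometric disjointness is never needed. Replacing your geometric-dual preliminary with that observation turns your argument into the paper's proof. (A minor slip at the end: removing a leaf $i$ from an order-$n$ non-repeating tree and taking the freed edge as the root edge yields an order-$n$ rooted tree with $n+1$ leaves, not an order-$(n-1)$ one.)
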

Here $\lambda$ denotes Wall's intersection pairing with values in
$\Z[\pi]$, and $\delta_{ij}\in\{0,1\}$ is the Kronecker delta. Note that there
are no restrictions on  intersections among the dual spheres $B_i$.
Theorem~\ref{thm:duals} is proved in section~\ref{sec:casson-lemma-proof}.

\subsubsection{Homotopy invariance of higher-order intersection invariants}\label{subsubsec:htpy-invariance}
Our proposed program for pulling apart $2$--spheres in $4$--manifolds involves refining 
Theorem~\ref{thm:non-rep-obstruction-theory} by formulating (and computing)
the relations INT$_n(A)\subset\Lambda_n(\pi)$ so that $\lambda_n(A):=\lambda_n(\cW)\in\Lambda_n(\pi)/\mathrm{INT}_n(A)$ is a homotopy invariant of $A$ (independent of the choice of order $n$ non-repeating Whitney tower $\cW$) which represents
the complete obstruction to the existence
of an order $n+1$ non-repeating tower supported by $A$. Via Theorem~\ref{thm:tower-separates} this would provide a
procedure to determine whether or not $A$ can be pulled apart. The following observation clarifies what needs to be shown:
\begin{prop}\label{prop:fixed-immersion}
If for a \emph{fixed} immersion $A$ the value of $\lambda_n(\cW)\in\Lambda_n(\pi)/\mathrm{INT}_n(A)$ does not depend the choice of order $n$ non-repeating Whitney tower $\cW$ supported by $A$, then 
$\lambda_n(A):=\lambda_n(\cW)\in\Lambda_n(\pi)/\mathrm{INT}_n(A)$ only depends on the
homotopy class of $A$.$\hfill\square$
\end{prop}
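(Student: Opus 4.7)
The plan is to reduce homotopy invariance to invariance of $\lambda_n$ under the elementary moves of a generic homotopy, with the tower-independence hypothesis providing the main lever. Given homotopic immersions $A$ and $A'$ both admitting order $n$ non-repeating Whitney towers, a standard general-position argument factors a generic homotopy between them into a finite sequence of ambient isotopies, finger moves, Whitney moves, and cusp homotopies. Ambient isotopies carry the Whitney tower along and manifestly preserve $\lambda_n$. The target group $\Lambda_n(\pi)/\mathrm{INT}_n(A)$ is itself homotopy-invariant, since the $\mathrm{INT}_n$ relations depend only on the homotopy classes of the order zero sheets and of the $2$--spheres used to tube Whitney disks, so it remains to handle the three nontrivial local moves.

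The finger-move step is the heart of the argument. Suppose $A'$ is obtained from $A$ by a finger move, with associated canceling transverse intersections $p^\pm$ and the natural Whitney disk $W_f$ pairing them. Fix any order $n$ non-repeating Whitney tower $\cW$ on $A$. By general position, the support of the move and the disk $W_f$ can be taken disjoint from $\cW$. If $p^\pm$ lie on two distinct order zero sheets, then $\cW':=\cW\cup W_f$ is an order $n$ non-repeating Whitney tower on $A'$ in which $W_f$ is clean; a clean order $1$ Whitney disk contributes no unpaired intersection points, so $\lambda_n(\cW')=\lambda_n(\cW)$ in $\Lambda_n(\pi)$. If $p^\pm$ are self-intersections of a single sheet, they are repeating and need not be paired, so $\cW$ itself is already an order $n$ non-repeating tower on $A'$ with the same $\lambda_n$. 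Applying the hypothesis to $A'$ then yields $\lambda_n(A')=\lambda_n(\cW')=\lambda_n(\cW)=\lambda_n(A)$ in $\Lambda_n(\pi)/\mathrm{INT}_n(A)$.

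The Whitney-move case is formally dual: if $A \to A'$ is a Whitney move, then $A'\to A$ is a finger move, and the previous paragraph applies in reverse. For a cusp homotopy, the new or removed double point is automatically a self-intersection, hence a repeating intersection invisible to $\lambda_n$; after a small perturbation the cusp is supported in a ball disjoint from $\cW$, so $\cW$ remains an order $n$ non-repeating tower on the new immersion and $\lambda_n$ is unchanged.

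The main technical obstacle is carrying out the general-position arguments that localize each elementary move away from a preassigned Whitney tower, and, in the finger case, verifying that $W_f$ is correctly framed and unobstructed so that it genuinely extends the tower. Once this localization is in hand, the tree-count comparisons are routine, and the hypothesis on tower-independence applied to the target immersion closes the argument at each step.
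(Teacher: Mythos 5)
Your overall plan---decompose a generic homotopy into elementary moves and exploit that a finger move creates only a locally Whitney-disked pair (or a repeating self-intersection)---is the same strategy as the paper's. But the move-by-move structure leaves a genuine gap. After a Whitney move $A_i\to A_{i+1}$, the intermediate immersion $A_{i+1}$ need not admit \emph{any} order $n$ non-repeating Whitney tower: a Whitney move can eliminate an order zero non-repeating intersection pair that your current tower $\cW_i$ was pairing with one of \emph{its own} Whitney disks (which need not be the disk guiding the move), and then $\cW_i$ is no longer a tower on $A_{i+1}$, possibly with no evident repair. Saying ``the previous paragraph applies in reverse'' presupposes a tower on $A_{i+1}$ to which you can apply the inverse finger-move step; if none exists, $\lambda_n(A_{i+1})$ is undefined and the chain of equalities $\lambda_n(A_0)=\cdots=\lambda_n(A_k)$ cannot be assembled.

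The paper circumvents this entirely with a reordering you do not perform: up to isotopy, a generic regular homotopy can be arranged so that \emph{all} finger moves precede \emph{all} Whitney moves. Letting $A''$ be the immersion sitting between the finger moves and the Whitney moves, $A''$ differs from $A$ by finger moves alone, and also from $A'$ by finger moves alone (the Whitney moves $A''\to A'$ are inverses of finger moves $A'\to A''$). Any tower on $A$ or on $A'$ then extends to a tower on $A''$ with the same intersection invariant simply by adjoining the local Whitney disks over the finger arcs, and the hypothesis is applied \emph{once}, at $A''$. You should incorporate this reordering to $A''$; without it your step-by-step induction does not go through. (Your explicit handling of cusp homotopies, which the paper leaves implicit under the word ``regular,'' is otherwise reasonable.)
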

To see why this is true, observe that, up to isotopy, any generic regular homotopy from $A$ to $A'$ can be realized as a sequence of finitely many finger moves followed by finitely many Whitney moves. Since any Whitney move has a finger move as an ``inverse'', there exists $A''$ which differs from each of $A$ and $A'$ by only finger moves (up to isotopy). But a finger move is supported near an arc, which can be assumed to be disjoint from the Whitney disks in a Whitney tower, and the pair of intersections created by a finger move admit a local Whitney disk; so any Whitney tower on $A$ or $A'$ gives rise to a Whitney tower on $A''$ with the same intersection invariant.

Thus, the problem is to find INT$_n(A)$ relations which give independence of the choice of $\cW$ for a fixed immersion $A$, and can be realized geometrically so that  
$\lambda_n(\cW)\in\mathrm{INT}_n(A)$ implies that $A$ bounds an order $n+1$ non-repeating Whitney tower.
We conjecture that all these needed relations do indeed 
correspond to lower-order intersections involving $2$--spheres, and hence deserve to be called ``intersection'' relations.
Although such INT$_n(A)$ relations are completely understood for $n=1$ (see \ref{sec:order-1-INT} below),
a precise formulation for the $n=2$ case already presents interesting subtleties. We remark that for maps of higher genus surfaces there can also be indeterminacies (due to choices of boundary arcs of Whitney disks) which do not come from $2$--spheres; see \cite{S3} for the order $1$ invariants of immersed annuli.

Useful necessary and sufficient conditions for pulling apart four or more $2$--spheres in an arbitrary $4$--manifold are not currently known. In Section~\ref{sec:order2-INT} we examine the intersection indeterminacies for the relevant order 2 non-repeating
intersection invariant $\lambda_2$ in the simply connected setting, and show how they can be computed as the image 
in $\Lambda_2(4)\cong \Z^2$ of a map whose non-linear part
is determined by certain Diophantine quadratic equations
which are coupled by the intersection form on $\pi_2X$ (see section~\ref{subsubsec:quadratic-INT2-maps}).
Carrying out this computation in general raises interesting number theoretic questions, and has motivated work
of Konyagin and Nathanson in \cite{Kon}.

We'd like to pose the following {\em challenge}: Formulate the $\mathrm{INT}_n(A)$ relations for $n\geq 2$ which make the following conjecture precise and true:
\begin{conj}\label{conj:INT}
$A:\amalg^m S^2\to X$ can be pulled apart if and only if $\lambda_n(A):=\lambda_n(\cW)$
vanishes in $\Lambda_n(\pi)/\mathrm{INT}_n(A)$ for $n=0,1,2,3,\ldots,m-2$.
\end{conj}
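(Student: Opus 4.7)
The plan is to set up a two-part program: (a) inductively define $\mathrm{INT}_n(A)\subset\Lambda_n(\pi)$ as the indeterminacy arising from lower-order intersections with auxiliary $2$--spheres in $X$, and (b) prove the biconditional by induction on $n$. For (a), guided by the discussion around Proposition~\ref{prop:fixed-immersion} and Section~\ref{sec:order2-INT}, one would declare $\mathrm{INT}_n(A)$ to be the subgroup generated by all differences $\lambda_n(\cW')-\lambda_n(\cW)$ over pairs of order $n$ non-repeating Whitney towers on the \emph{same} immersion $A$, and then show that this subgroup equals the image of a geometric realization map whose inputs are tuples of classes in $\pi_2 X$ together with their lower-order non-repeating intersection data with the $A_i$ and with each other. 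The conjectural content is that this realization map depends only on strictly lower-order invariants, so that $\mathrm{INT}_n(A)$ is computable inductively.

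The ``only if'' direction of the biconditional is immediate: if $A$ has been pulled apart then the empty non-repeating Whitney tower supports $A$ at every order, forcing $\lambda_n(A)=0$ trivially. The substance lies in the ``if'' direction, which I would establish by induction on $n$ starting from the base cases $n=0$ (definition of $\lambda_0$) and $n=1$ \cite{ST1}. Inductively, assume $A$ admits an order $n$ non-repeating Whitney tower $\cW$ with $\lambda_n(A)=0$ in $\Lambda_n(\pi)/\mathrm{INT}_n(A)$. By the construction of $\mathrm{INT}_n(A)$ there should be a sequence of geometric modifications (tubings into $\pi_2 X$ with controlled boundary twists, analogous to those used for Theorem~\ref{thm:duals}) fixing $A$ up to homotopy and producing $\cW'$ with $\lambda_n(\cW')=0\in\Lambda_n(\pi)$. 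Theorem~\ref{thm:non-rep-obstruction-theory} then promotes $\cW'$ to an order $n+1$ non-repeating Whitney tower on a homotopic $A'$. Iterating up to order $n=m-1$ and invoking Theorem~\ref{thm:tower-separates} pulls $A$ apart.

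The hard part — and the reason this remains a conjecture rather than a theorem — is precisely part (a): providing a clean, algebraically effective description of $\mathrm{INT}_n(A)$ and verifying that every element admits a controlled geometric realization. As Section~\ref{sec:order2-INT} already shows at $n=2$, these indeterminacies are \emph{non-linear}: they are cut out by Diophantine quadratic forms coupled through the intersection form on $\pi_2 X$, a phenomenon whose arithmetic is studied in \cite{Kon}. At higher orders one must expect increasingly intricate coupling between intersections of Whitney disks of different orders with multiple tubed-in spheres, so that $\mathrm{INT}_n(A)$ is genuinely not the image of a linear map. A second delicate issue is that the modifications used to realize an element of $\mathrm{INT}_n(A)$ must be performed without destroying the non-repeating condition elsewhere in the tower, which requires carefully arranged boundary-twisting and pushing-down operations that exchange indeterminate non-repeating data for new repeating self-intersections, as in the argument sketched in Remark~\ref{rem:intro-target-tree-groups}.
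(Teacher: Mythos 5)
This statement is explicitly posed in the paper as an open \emph{conjecture}, not a theorem: the sentence immediately preceding it reads ``We'd like to pose the following challenge: Formulate the $\mathrm{INT}_n(A)$ relations which make the following conjecture precise and true.'' There is no proof in the paper for you to be compared against, and your own write-up in fact recognizes this by explaining why ``this remains a conjecture rather than a theorem.'' So what you have produced is, correctly, not a proof but a restatement of the program the authors themselves outline around Proposition~\ref{prop:fixed-immersion}, Theorem~\ref{thm:non-rep-obstruction-theory}, Theorem~\ref{thm:tower-separates}, and Section~\ref{sec:order2-INT}. Your conditional chain --- fix $A$, show $\lambda_n(\cW)$ is well-defined modulo $\mathrm{INT}_n(A)$, invoke Proposition~\ref{prop:fixed-immersion} for homotopy invariance, realize the relations geometrically, promote to an order $n+1$ tower by Theorem~\ref{thm:non-rep-obstruction-theory}, and terminate at $n=m-1$ via Theorem~\ref{thm:tower-separates} --- does match the intended strategy.

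Two cautions, though. First, declaring $\mathrm{INT}_n(A)$ tautologically as the set of differences $\lambda_n(\cW')-\lambda_n(\cW)$ over towers on the fixed $A$ makes well-definedness of $\lambda_n(A)$ true by fiat, but then the entire content of the conjecture is that this set admits an a priori description in terms of lower-order invariants of tubed-in $2$--spheres and is geometrically realizable; the paper's challenge is precisely to replace the tautology with such a formula, so the inductive step in your ``if'' direction (``there should be a sequence of geometric modifications producing $\cW'$ with $\lambda_n(\cW')=0$'') is exactly the unproved assertion, not a consequence of the construction. Second, you propose $\mathrm{INT}_n(A)$ as a \emph{subgroup}, but the paper's own analysis at order $2$ (Definition~\ref{def:INT(A)} and Section~\ref{subsubsec:quadratic-INT2-maps}) shows that once $\lambda_0$ is nontrivial on $\pi_2 X$ the indeterminacy set $\mathrm{INT}_2^{\cW}$ is quadratic and is declared only a \emph{subset} of $\Lambda_2(4)$, with the quotient formed by identifying its elements rather than by dividing out a subgroup; whether the image of the resulting non-linear realization map is a subgroup is raised there as an open number-theoretic question, partially addressed in \cite{Kon}. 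Your formulation should not presuppose a group structure that the paper explicitly leaves undecided.
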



\section{Whitney towers}\label{sec:whitney-towers}

This section contains a summary of relevant Whitney tower notions and
notations as described in more detail in \cite{CST,CST1,CST2,S1,S2,S3,ST1,ST2}.
Recall our blurring of the distinction between a map $A:\Sigma\to X$ and its image, which leads us to speak of
$A$ as a ``collection'' of immersed connected surfaces in $X$.

\begin{rem}\label{rem:pi1-null}
Although this paper focuses on pulling apart $A$ in the case where the components $\Sigma_i$ of $\Sigma$ are spheres and/or disks, much of the
discussion is also relevant to the \emph{$\pi_1$-null} setting; i.e.~the $\Sigma_i$ are compact connected surfaces of arbitrary genus and the component maps
$A_i:\Sigma_i\to X$ induce trivial maps 
$\pi_1\Sigma_i\to\pi_1X$
on fundamental groups.
\end{rem}

\subsection{Whitney towers}\label{sec:w-tower-def}
The following formalizes the discussion from the introduction by inductively defining Whitney towers
of order $n$ for each non-negative integer $n$.
\begin{defn}\label{w-tower-defn}\mbox{}
\begin{itemize}
\item A {\em surface of order 0} in a $4$--manifold $X$
is a properly immersed connected compact surface (boundary embedded in the boundary
of $X$ and interior immersed in the interior of $X$). A {\em
Whitney tower of order 0} in $X$ is a collection of order 0
surfaces.
\item The {\em order of a (transverse) intersection point} between a surface of order $n_1$ and a
surface of order $n_2$ is $n_1+n_2$.
\item The {\em order of a Whitney disk} is $n+1$ if it pairs intersection points of order $n$.
\item For $n\geq 0$,
a {\em Whitney tower of order $n+1$} is a Whitney tower $\cW$ of
order $n$ together with Whitney disks pairing all order $n$
intersection points of $\cW$. These order $n+1$ Whitney disks are allowed to
intersect each other as well as lower-order surfaces.
\end{itemize}

The Whitney disks in a Whitney tower are required to be {\em
framed} \cite{CST1,FQ,ST1} and have disjointly embedded boundaries.
Each order $0$ surface in a Whitney tower is also required to be framed, in the sense that its normal bundle in $X$ has trivial
(relative) Euler number.
Interior intersections are assumed to be transverse. A
Whitney tower is {\em oriented} if all its surfaces (order $0$
surfaces {\em and} Whitney disks) are oriented. Orientations and framings on any 
boundary components of order $0$ surfaces are required to be compatible
with those of the order $0$ surfaces.  
A {\em based}
Whitney tower includes a chosen basepoint on each surface
(including Whitney disks) together with a {\em whisker} (arc) for
each surface connecting the chosen basepoints to the basepoint of
$X$. 
\end{defn}
We will assume our Whitney towers are based and oriented, although
whiskers and orientations will usually be suppressed from
notation. The collection $A$ of order $0$ surfaces in a Whitney tower $\cW$ is said to \emph{support} $\cW$, and we also say that $\cW$ is a Whitney tower \emph{on} $A$.
A collection $A$ of order $0$ surfaces is said to
\emph{admit} 
an order $n$ Whitney tower if $A$ is homotopic (rel boundary)
to $A'$ supporting an order $n$ Whitney tower.

\subsection{Trees for Whitney disks and intersection points.}
In this paper, a {\em tree} will always refer to a finite oriented unitrivalent tree, where the {\em (vertex) orientation} of a tree is given by cyclic orderings of the adjacent edges around each trivalent vertex. The \emph{order} of a tree is the number of trivalent vertices.
Univalent vertices will usually be labeled from the set $\{1,2,3,\ldots,m\}$ indexing the order $0$ surfaces, and we consider trees up to isomorphisms preserving these labelings.
A tree is \emph{non-repeating} if its
univalent labels are distinct. When $X$ is not simply connected, edges will be oriented
and labeled with elements of $\pi_1X$. A \emph{root} of a tree
is a chosen univalent vertex (usually left un-labeled).

We start by considering the case where $X$ is simply connected:

Formal non-associative bracketings of elements from the index set are used as subscripts to
index surfaces in a Whitney tower $\cW\subset X$, writing $A_i$ for an
order~$0$ surface (dropping the brackets around the singleton
$i$), $W_{(i,j)}$ for an order~$1$ Whitney disk that pairs
intersections between $A_i$ and $A_j$, and $W_{((i,j),k)}$ for an order~$2$
Whitney disk pairing intersections between
$W_{(i,j)}$ and $A_k$, and so on, with the ordering of the bracket components determined 
by an orientation convention described below (\ref{sec:orientations}). When writing $W_{(I,J)}$ for a
Whitney disk pairing intersections between $W_I$ and $W_J$, the
understanding is that if a bracket $I$ is just a singleton $i$
then the surface $W_I=W_{i}$ is just the order~zero surface
$A_i$. Note that both Whitney disks and order $0$ surfaces are referred to as ``surfaces in $\cW$''.

Via the usual correspondence 
between non-associative brackets and rooted trees, this indexing gives a correspondence between surfaces in $\cW$
 and rooted trees: To a Whitney disk $W_{(I,J)}$
we associate the rooted tree corresponding to the bracket $(I,J)$. We use the same notation for rooted trees and brackets,
so the bracket operation
corresponds to the \emph{rooted
product} of trees which glues together the root vertices of $I$ and $J$ to a single vertex and sprouts a new rooted edge from this vertex.
With this notation the order of a Whitney disk $W_K$ is equal to the order of (the rooted tree) $K$.

The rooted tree
$(I,J)$ associated to $W_{(I,J)}$ can be considered to be a subset of $\cW$, with its root edge
(including the root edge's trivalent vertex) sitting in the interior of
$W_{(I,J)}$, and its other edges bifurcating down through lower-order
Whitney disks. The unrooted tree $t_p$ associated to any intersection point $p\in W_{(I,J)}\cap W_K$ is the \emph{inner product}
$t_p= \langle \,(I,J),K\,\rangle$ gotten by identifying the roots of the trees $(I,J)$ and $K$ to a single non-vertex point.
Note that $t_p$ also can be considered as a subset of $\cW$, with the edge of $t_p$ containing $p$ a sheet-changing
path connecting the basepoints of $W_{(I,J)}$ and $W_K$ (see Figure~\ref{fig:W-disks-inner-prod-oriented}). 

If $X$ is not simply connected, then the edges of the just-described trees are decorated by elements of $\pi_1X$ as follows:
Considering the trees as subsets of $\cW$, each edge of a tree is a sheet-changing path connecting basepoints of
adjacent surfaces of $\cW$. Choosing orientations of these sheet-changing paths determines elements of $\pi_1X$ (using the whiskers on the surfaces) which are attached as labels on
the correspondingly oriented tree edges.

Note that the notation for trees is slightly different in the older papers \cite{S1,ST2}, where the rooted tree 
associated to a bracket $I$ is denoted $t(I)$, and the rooted and inner products are denoted by $*$ and $\cdot$ respectively. The notation of this paper agrees with the more recent papers \cite{CST0, CST1,CST2,CST3, CST4,S3}.

\begin{figure}[ht!]
         \centerline{\includegraphics[scale=.45]{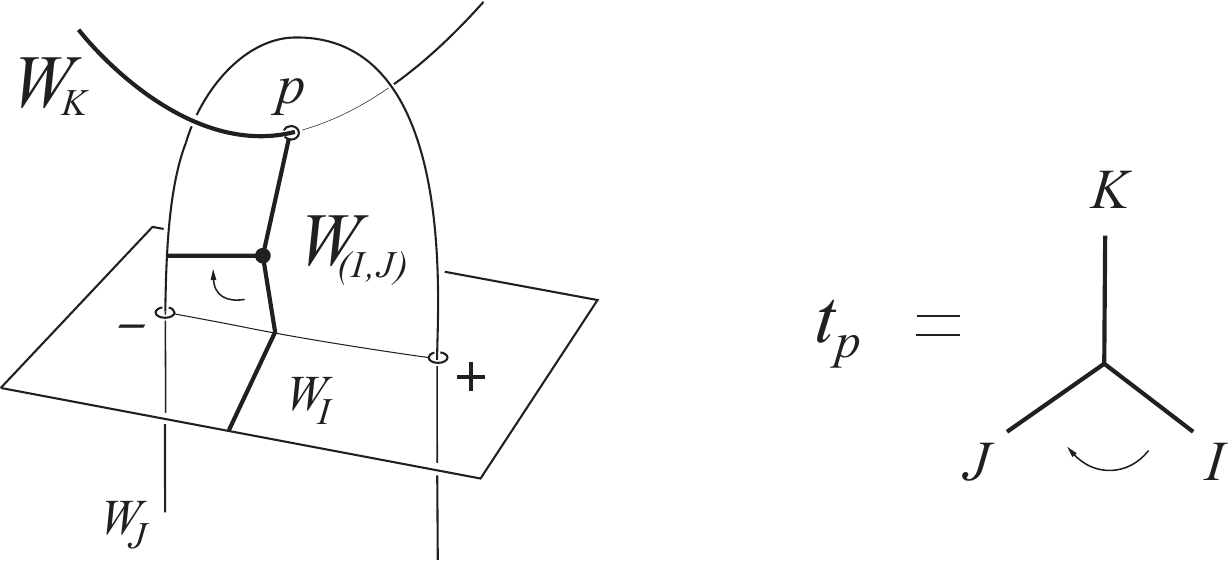}}
         \caption{A local picture of the tree $t_p=\langle (I,J),K\rangle$ associated to $p\in W_{(I,J)}\cap W_K$ near 
         a trivalent vertex adjacent to the edge of $t_p$ passing through an unpaired intersection point $p$
         in a Whitney tower $\cW$. On the left $t_p$ is pictured as a subset of $\cW$, and on the right as an abstract 
         labeled vertex-oriented tree. In a non-simply connected $4$--manifold $X$ the edges of $t_p$ would also be oriented and labeled by elements of $\pi_1X$ (as in Figure~\ref{fig:Relations-fig} below).}
         \label{fig:W-disks-inner-prod-oriented}
\end{figure}

\subsection{Orientation conventions}\label{sec:orientations}
Thinking of the tree $I$ associated to
a Whitney disk
$W_I$ as a subset of $\cW$, it can be arranged that the
trivalent orientations of $I$ are induced by the orientations
of the corresponding Whitney disks: Note that the pair of edges
which pass from a trivalent vertex down into the lower-order
surfaces paired by a Whitney disk determine a ``corner'' of the
Whitney disk which does not contain the other edge of the
trivalent vertex. If this corner contains the {\em negative}
intersection point paired by the Whitney disk, then the vertex
orientation and the Whitney disk orientation agree. Our figures
are drawn to satisfy this convention. 

This ``negative corner'' convention (also used in \cite{CST1,CST2}), which differs from the
positive corner convention used in \cite{CST,ST2}, turns out to be
compatible with the usual commutator conventions, for instance in the setting of Milnor invariants
(see Figure~\ref{fig:Wtower-to-grope1and2}).

\subsection{Non-repeating Whitney towers}\label{sec:non-repeating-w-tower-def}
Whitney disks and intersection points are called
\emph{non-repeating} if their associated trees are non-repeating. This means that the univalent vertices are labeled by {\em distinct} indices 
(corresponding to distinct order $0$ surfaces, i.e.~distinct connected components of $A$). 
A Whitney tower $\cW$ is an order $n$ \emph{non-repeating Whitney tower} if all non-repeating intersections of order (strictly) less than $n$ are paired by Whitney disks. In particular, if $\cW$ is an order $n$ Whitney tower then $\cW$ is also an order $n$ non-repeating Whitney tower. In a non-repeating Whitney tower repeating
intersections of any order are not required to be paired by Whitney disks.

\subsection{Intersection invariants}\label{sec:int-trees}
For a group $\pi$, denote by $\cT_n(m,\pi)$ the abelian group
generated by order $n$ (decorated) trees modulo the relations
illustrated in Figure~\ref{fig:Relations-fig}. 
\begin{figure}[ht!]
         \centerline{\includegraphics[scale=.75]{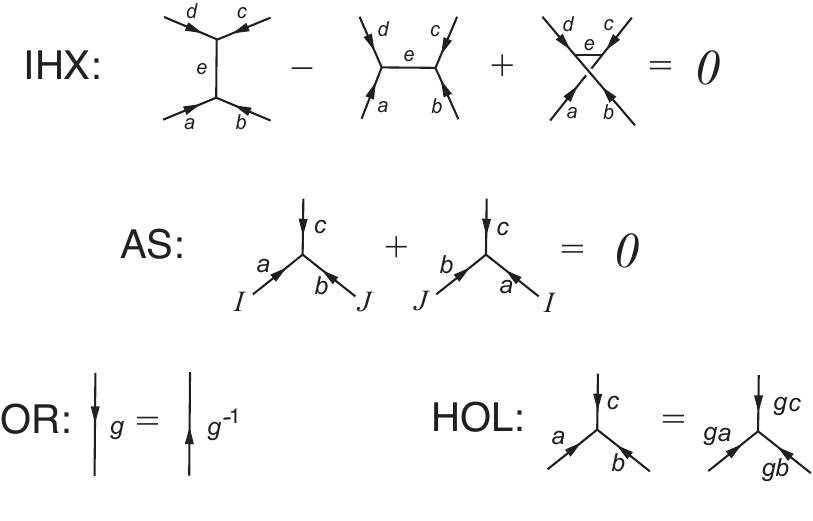}}
         \caption{The relations in $\cT_n(\pi,m)$: IHX (Jacobi), AS (antisymmetry), OR (orientation), HOL (holonomy). 
         These are `local' pictures, meaning that the unlabeled univalent vertices extend to fixed decorated subtrees in each equation.
         For instance, in the right-hand term of the HOL relation only the three visible edge decorations are multiplied by the element $g$, 
         corresponding to a change of whisker on a Whitney disk at the indicated trivalent vertex. 
         All vertex-orientations are induced from a fixed orientation of the plane; in particular, the two terms in the AS relation 
         only differ by the orientation at the indicated trivalent vertex, where the two edges 
         extending to the subtrees $I$ and $J$ have been interchanged.}
         \label{fig:Relations-fig}
\end{figure}

Note that when $\pi$ is the trivial group, the edge
decorations (orientations and $\pi$-labels) disappear, and the relations reduce to the usual AS
antisymmetry and IHX Jacobi relations of finite type theory (compare also the decorated graphs of \cite{GL}). All the relations are
homogeneous in the univalent labels, and restricting the
generating trees to be non-repeating order $n$ trees defines the
subgroup $\Lambda_n(m,\pi)<\cT_n(m,\pi)$.
(See sections 2.1 and 3 of \cite{ST2} for explanations of these relations.)

\begin{defn}\label{def:tau-lambda}
For an order $n$ (oriented) Whitney tower $\cW$ in $X$, the \emph{order n intersection
invariant} $\tau_n (\cW)$ is defined by summing the signed trees $\pm
t_p$ over all order $n$ intersections $p\in\cW$:
$$
\tau_n(\cW):=\sum \sign(p)\cdot t_p\in\cT_n(\pi).
$$
Here $\pi=\pi_1X$; and $\sign(p)=\pm$, for $p\in W_I\cap W_J$, is the usual
sign of an intersection between the oriented Whitney disks $W_I$ and
$W_J$.

If $\cW$ is an order $n$ non-repeating Whitney tower, the order $n$
\emph{non-repeating intersection invariant}
$\lambda_n(\cW)$ is analogously defined by
$$
\lambda_n(\cW):=\sum \sign(p)\cdot t_p\in\Lambda_n(\pi)
$$
where the sum is over all order $n$ non-repeating intersections
$p\in\cW$.
\end{defn}

\subsection{Order $0$ intersection
invariants}\label{sec:order-zero-int-trees} The order $0$
intersection invariants $\tau_0$ and $\lambda_0$ for $A:\amalg^m S^2 \to X$ carry the same information as Wall's \cite{W}
Hermitian intersection form $\mu,\lambda$: The generators in $\tau_0(A)\in\cT_0(\pi,m)$ with
both vertices labeled by the same index $i$ correspond to Wall's
self-intersection invariant $\mu(A_i)$. For $\mu(A_i)$ to be a \emph{homotopy}
(not just regular homotopy) invariant, one must also mod out by a
\emph{framing relation} which kills order $0$ trees labeled by the trivial
element in $\pi$ (see \cite{CST1} for higher-order framing
relations). Wall's homotopy invariant Hermitian intersection
pairing $\lambda(A_i,A_j)\in\Z[\pi]$ for $i\neq j$ corresponds
to $\lambda_0(A)\in\Lambda_0(\pi,m)$. 

The vanishing of these invariants corresponds to the order $0$ intersections coming in canceling pairs (after perhaps a homotopy of $A$), so $A$ admits an order 1 Whitney tower if and
only if $\tau_0(A)=0\in\cT_0(\pi,m)$, and admits
an order 1 non-repeating Whitney tower if and only if
$\lambda_0(A)=0\in\Lambda_0(\pi,m)$.

\subsection{Order $1$ intersection
invariants}\label{sec:order-one-int-trees} It was shown in
\cite{ST1}, and for $\pi_1X=1$ and $m=3$ in \cite{Ma,Y}, that for
$A:\amalg^m S^2 \to X$ admitting an order 1 Whitney tower 
(resp.~non-repeating Whitney tower) $\cW$, the order 1 intersection invariant
$\tau_1(A):=\tau_1(\cW)$ (resp.~order $1$
non-repeating intersection invariant
$\lambda_1(A):=\lambda_1(\cW)$) is a homotopy
invariant of $A$, if taken in an appropriate
quotient of $\cT_1(\pi,m)$ (resp.~$\Lambda_1(\pi,m)$). The relations defining this quotient 
are
determined by order $0$ intersections between the $A_i$ and
immersed $2$--spheres in $X$. These are the order 1
intersection relations $\textrm{INT}_1$ which are described in \cite{ST1} (in
slightly different notation) and below in
Section~\ref{sec:order2-INT} (for $\lambda_1$). As remarked in the introduction, for $\tau_1$ there are also framing
relations, but there are no framing relations for $\lambda_n$ (for
all $n$) because Whitney disks can always be framed
by the boundary-twisting operation \cite[Sec.1.3]{FQ} which creates only repeating
intersections.

From \cite{ST1}, we have that $A$ admits an order 2 Whitney tower (resp.~order 2 non-repeating Whitney tower) if and only if $\tau_1(A)$ 
(resp.~$\lambda_1(A)$) vanishes. In particular,
$\lambda_1(A_1,A_2,A_3)\in\Lambda_1(\pi,3)/\mathrm{INT}_1$ is the complete
obstruction to pulling apart three order $0$ surfaces with vanishing $\lambda_0(A_1,A_2,A_3)$.

\subsection{Order $n$ intersection
invariants}\label{sec:order-n-obstruction} As was shown in Theorem~2 of \cite{ST2}, for $A$
admitting a Whitney tower $\cW$ of order $n$, if
$\tau_n(\cW)=0\in\cT_n(\pi)$ then $A$ admits a Whitney tower of
order $n+1$. The proof of this result proceeds by geometrically realizing
the relations in the target group of the intersection invariant in a
controlled manner, so that one can convert ``algebraic cancellation'' of
pairs of trees to ``geometric cancellation'' of pairs of points (paired by
next-order Whitney disks).
The exact same arguments work
restricting to the non-repeating case to prove Theorem~\ref{thm:non-rep-obstruction-theory}
of the introduction: For $A$
admitting a non-repeating Whitney tower $\cW$ of order $n$, if
$\lambda_n(\cW)=0\in\Lambda_n(\pi)$ then $A$ admits a non-repeating Whitney tower of
order $n+1$. Beyond this ``sufficiency'' result, it is not known for $n\geq 2$ what additional relations
 $\mathrm{INT}_n\subset\Lambda_n(\pi)$ would also make the vanishing of $\lambda_n(\cW)$ in the quotient a necessary
 condition
 for $A$ to admit a non-repeating Whitney tower of
order $n+1$, as discussed in \ref{subsubsec:htpy-invariance} of the introduction.

\subsection{The groups $\Lambda_n$}\label{subsec:lambda-n-groups}
The groups $\Lambda_n(\pi,m)$ provide upper bounds for the order $n$ non-repeating obstruction theory, and hence by Corollary~\ref{cor:tower=disjoint} also for the obstructions to pulling apart surfaces.
The following result describes the structure of  $\Lambda_n(\pi,m)$:
\begin{lem}\label{lem:Lambda-computation}
$\Lambda_n(\pi,m)$ is isomorphic (as an additive abelian group) to the $\binom{m}{n+2}n!$-fold direct sum of the integral group ring $\Z[\pi^{n+1}]$ of the $(n+1)$-fold cartesian product $\pi^{n+1}=\pi\times\pi\times\cdots\times\pi$.
\end{lem}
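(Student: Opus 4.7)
The plan is to split $\Lambda_n(\pi,m)$ by the set of univalent labels, normalize the $\pi$-decorations on edges via OR and HOL, and finally invoke a known rank count for unlabeled trees modulo AS and IHX.

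First, each of the relations AS, IHX, OR, and HOL preserves the (unordered) set of univalent labels of a tree. Since the trees generating $\Lambda_n(\pi,m)$ are non-repeating and of order $n$, this set has exactly $n+2$ elements, so one obtains a direct sum decomposition
$$
\Lambda_n(\pi,m)\;\cong\;\bigoplus_{S}\Lambda_n(\pi,S),
$$
where $S$ ranges over the $\binom{m}{n+2}$ subsets of $\{1,\ldots,m\}$ of size $n+2$, and $\Lambda_n(\pi,S)$ is the subgroup generated by those trees whose univalent vertices are bijectively labeled by $S$.

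Next, fix such a subset $S$. An order $n$ tree has $n$ trivalent vertices and $2n+1$ edges, of which $n+2$ are incident to leaves and $n-1$ are interior edges. HOL provides a gauge action of $\pi$ at each trivalent vertex, multiplying the labels of the three incident edges uniformly, and OR allows reversing any edge at the cost of inverting its label. Fixing a root leaf in $S$ and orienting all edges away from it, I would apply HOL iteratively at each trivalent vertex, starting from the trivalent vertex farthest from the root, to trivialize the $n-1$ interior edge labels together with one additional leaf edge label adjacent to the root. The remaining $n+1$ leaf edges can then be labeled by an arbitrary element of $\pi^{n+1}$, and since the tree is simply connected, the $\pi^n$ gauge action on $\pi^{2n+1}$ is free, giving a bijection between gauge-equivalence classes of edge decorations and $\pi^{n+1}$.

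After this normalization, the relations AS and IHX act only on the underlying oriented tree shape and leave the $\pi^{n+1}$-tuple of edge labels untouched, yielding an identification
$$
\Lambda_n(\pi,S)\;\cong\;\Z[\pi^{n+1}]\otimes_{\Z}T_n(S),
$$
where $T_n(S)$ is the free abelian group on trees with leaves bijectively labeled by $S$ modulo AS and IHX. Rooting at the chosen leaf identifies $T_n(S)$ with the multilinear degree-$(n+1)$ component of the free Lie algebra on $n+1$ generators, which is classically (and as used in \cite{CST3} for the repeating groups $\cT_n$) free abelian of rank $n!$. Summing over the $\binom{m}{n+2}$ choices of $S$ then assembles the stated isomorphism.

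The step that will require the most care is verifying that the HOL/OR normalization is compatible with the AS and IHX relations, so that the tensor product identification $\Z[\pi^{n+1}]\otimes T_n(S)$ is well-defined. Concretely, one must show that applying an AS or IHX move to a normalized tree and then re-normalizing produces the expected element of the tensor product; this is essentially mechanical once one unwinds the sign and orientation conventions of Section~\ref{sec:orientations}, but keeping careful track of the $\pi$-labels on edges near the support of the move and of the residual gauge action requires some bookkeeping.
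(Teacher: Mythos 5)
Your decomposition into $\binom{m}{n+2}$ summands by label-set matches the paper's first step, and both arguments ultimately rest on the rank-$n!$ count for the (reduced) free Lie algebra; but your route through the middle is genuinely different, and that is where a gap opens up. The paper first reduces \emph{tree shapes} to a concrete basis of ``simple'' (caterpillar) trees as in Figure~\ref{fig:simple-tree-decorated}, using an explicit algorithm: slide all subtrees to one side of the geodesic from the minimal to the maximal label via AS, then repeatedly apply IHX along that geodesic to flatten emanating branches. Only afterwards does it normalize the $\pi$-decorations, and only on those few fixed shapes, working from the minimal to the maximal label. You instead try to factor the $\pi$-decorations out \emph{first}, across all tree shapes simultaneously, as a global tensor factor $\Z[\pi^{n+1}]$, and then handle the shapes by appealing to the free Lie algebra.

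The step you flag as ``requiring the most care'' is in fact the substantive content of the claim, and your sketch does not close it. An IHX move changes the underlying tree shape, hence changes which edges are interior and which path each trivalent vertex takes to the root; consequently the gauge moves needed to re-normalize the two new IHX terms are different from those applied to the original tree, and one must check that all three normalized terms land on the \emph{same} element of $\pi^{n+1}$ (a statement about how the decorated IHX of Figure~\ref{fig:Relations-fig} interacts with your chosen normalization). Until that is proved you have only a well-defined map $\Lambda_n(\pi,S)\to\Z[\pi^{n+1}]\otimes T_n(S)$, not the asserted isomorphism. The paper's order of operations sidesteps the issue entirely, since after reducing to simple trees there is a single shape per basis element and no cross-shape compatibility to verify. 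Two smaller remarks: your iteration order ``starting from the trivalent vertex farthest from the root'' undoes itself, since a later HOL move at an inner vertex re-perturbs the already-killed incoming edge of a farther vertex — the normalization terminates cleanly only when you work outward from the root; and the $n!$ count is, in the paper, taken from the reduced free Lie algebra \cite[Thm.5.11]{MKS}, which agrees with the multilinear component you cite, so that part is fine.
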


\emph{Proof:}  
First consider the case where $\pi$ is trivial. Since the relations in $\Lambda_n(m)$ are all homogenous in the univalent labels, $\Lambda_n(m)$ is the direct sum of 
subgroups $\Lambda_n(n+2)$ over the $\binom{m}{n+2}$ choices of $n+2$ of the $m$ labels. 
(As noted in the introduction, $\Lambda_n(\pi,m)$ is trivial for $n\geq m-1$ since an order $n$ unitrivalent tree has $n+2$ univalent vertices.) We will show that each of these subgroups
has a basis given by the  $n!$ distinct
\emph{simple} non-repeating trees shown in Figure~\ref{fig:simple-tree-decorated} (ignoring the edge decorations for the moment), where an order $n$ tree is simple if it contains a geodesic of edge-length $n+1$.

For a given choice of $n+2$ labels, placing a root at the minimal-labeled vertex of each order $n$ tree gives an isomorphism from $\Lambda_n(n+2)$ to
the subgroup of non-repeating length $n+1$ brackets in the free Lie algebra (over $\Z$) on the other labels (with AS and IHX relations going to skew-symmetry relations and Jacobi identities).
This ``reduced'' free Lie algebra (see also \ref{sec:mu-invts} below) is known to have rank $n!$, as explicitly described in \cite[Thm.5.11]{MKS} (also implicitly contained in \cite[Sec.4--5]{M1}), so the
trees in Figure~\ref{fig:simple-tree-decorated} are linearly independent if they span.

To see that the
trees in Figure~\ref{fig:simple-tree-decorated} form a spanning set, first observe that for a given choice of $n+2$ labels, 
each order $n$ non-repeating tree
$t$ has a distinguished geodesic edge path $T_t$ from the minimal-label univalent vertex to the maximal-label univalent vertex.
For an orientation-inducing embedding of $t$ in the plane, it can be arranged that
all the sub-trees of $t$ emanating from $T_t$ lie on a preferred side of $T_t$ by applying AS relations at the trivalent vertices of $T_t$ as needed. Then, by repeatedly applying IHX relations (replacing the left-most I-tree by the difference of the H-tree and X-tree in the IHX relation of Figure~\ref{fig:Relations-fig}) at trivalent vertices of distinguished geodesics to reduce the order of the emanating sub-trees one eventually gets a linear combination of simple non-repeating trees as in Figure~\ref{fig:simple-tree-decorated} which is uniquely determined by $t$. (To see how the IHX relation reduces the order of subtrees emanating from a distinguished geodesic, 
observe that if the central edge of the I-tree in an IHX relation is the first edge of such a subtree, then the corresponding emanating subtrees in the H-tree and X-tree both have order decreased by one.)

In the case of non-trivial $\pi$, the group elements decorating the edges of the simple trees can always be (uniquely) normalized
to the trivial element on all but $n+1$ of the edges as shown in Figure~\ref{fig:simple-tree-decorated} (by applying HOL relations from Figure~\ref{fig:Relations-fig} and working from the minimal towards the maximal vertex label).
$\hfill\square$

\begin{figure}[ht!]
         \centerline{\includegraphics[scale=.35]{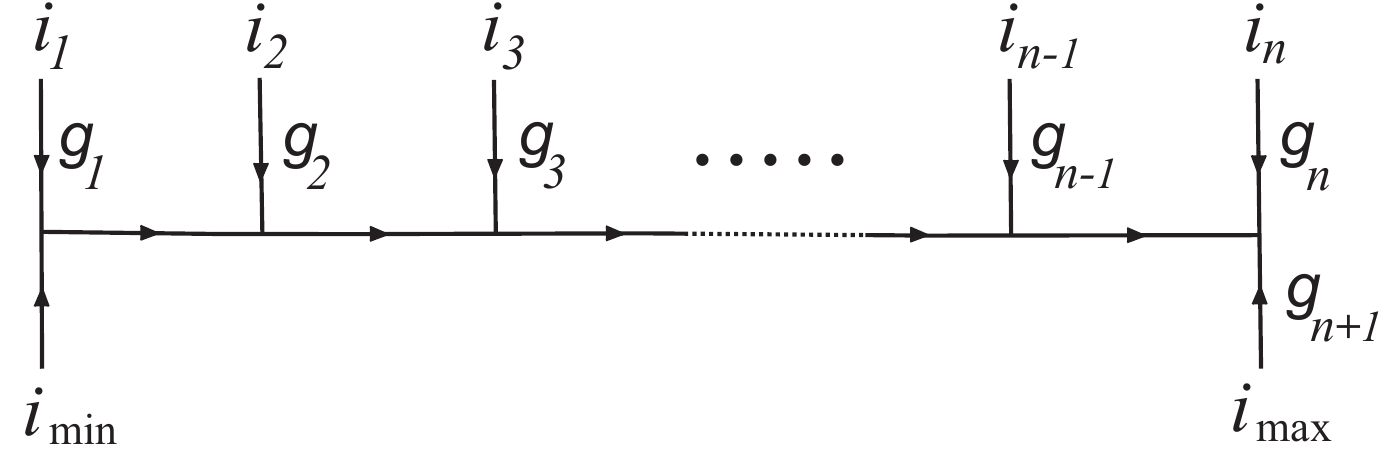}}
         \caption{A \emph{simple} order $n$ tree with minimal- and maximal-labeled vertices 
         connected by a length $n+1$ geodesic; 
         $1\leq i_{\textrm{min}}<i_{\textrm{max}}\leq m$, and $i_{\textrm{min}}<i_k<i_{\textrm{max}}$ for $1\leq k\leq n$. 
         Vertex orientations are induced by the planar embedding. By the HOL relations, all but $n+1$ of 
         the edge decorations can be set to the trivial element in $\pi$ (indicated by the `empty-labeled' edges in the figure).}
         \label{fig:simple-tree-decorated}
\end{figure}

\subsection{Some properties of Whitney towers}\label{sec:properties-of-w-towers}
For future reference, we note here some elementary properties of Whitney towers and their intersection invariants.

Let $A:\Sigma\to X$ support an order $n$ Whitney tower $\cW\subset X$, where $\Sigma$ has $m$ connected components $\Sigma_i$. We will consider the effects on $\tau_n(\cW)$ of changing the order $0$ surfaces $A_i:\Sigma_i\to
 X$ of $A$ by the operations of re-indexing, including parallel copies, taking internal sums, switching orientations, and deletions; all of which preserve the property that $A$ supports an order $n$ Whitney tower. We will focus on the case where $X$ is simply connected, which will be used in Section~\ref{sec:proof-thm-in-X-L}. (Analogous properties hold in the non-simply connected setting,
although when taking internal sums (\ref{subsec:connected-sums}) some care would be needed in keeping track of the effect on the edge decorations due to choices of arcs guiding the sums.)

\subsubsection{Re-indexing order $0$ surfaces}\label{subsec:re-indexing-order-zero-surfaces}
For $A:\Sigma\to X$ the natural indexing of the order $0$ surfaces of $\cW$ is by $\pi_0\Sigma$. In practice, we fix an identification
of $\pi_0\Sigma$ with the label set $\{1,2,\ldots,m\}$, and the effect of changing this identification is given by the corresponding permutation
of the univalent labels on all the trees representing $\tau_n(\cW)$. 

\subsubsection{Parallel Whitney towers}\label{subsec:parallel-w-towers}
Suppose $A$ is extended to $A'$ by including a parallel copy $A_{m+1}$ of the last order $0$ surface $A_m$ of $A$. 
Recall from Definition~\ref{w-tower-defn} that order $0$ surfaces have trivial (relative) normal Euler numbers, so each self-intersection of $A_m$ will give rise to a single self-intersection of $A_{m+1}$ and a pair of intersections between $A_{m+1}$ and $A_m$; and each intersection between $A_m$ and any $A_i$, for $i\neq m$, will give rise to a single intersection between $A_{m+1}$ and $A_i$; and no other intersections in $A'$ will be created. By the splitting procedure of \cite[Lem.13]{ST2}
(also \cite[Lem.3.5]{S1}) it can be arranged that all Whitney disks in $\cW$ are embedded and contained in standard $4$--ball thickenings of their trees. Since the Whitney disks are all framed, $\cW$ can be extended to an order $n$ Whitney tower $\cW'$ on $A'$ by including parallel copies of the Whitney disks in
$\cW$ as illustrated by Figure~\ref{fig:parallel-w-disks}.  This new Whitney tower $\cW'$ can be constructed in an arbitrarily small neighborhood of $\cW$, and the intersection invariants are related in the following way. 

\begin{figure}[ht!]
         \centerline{\includegraphics[scale=.275]{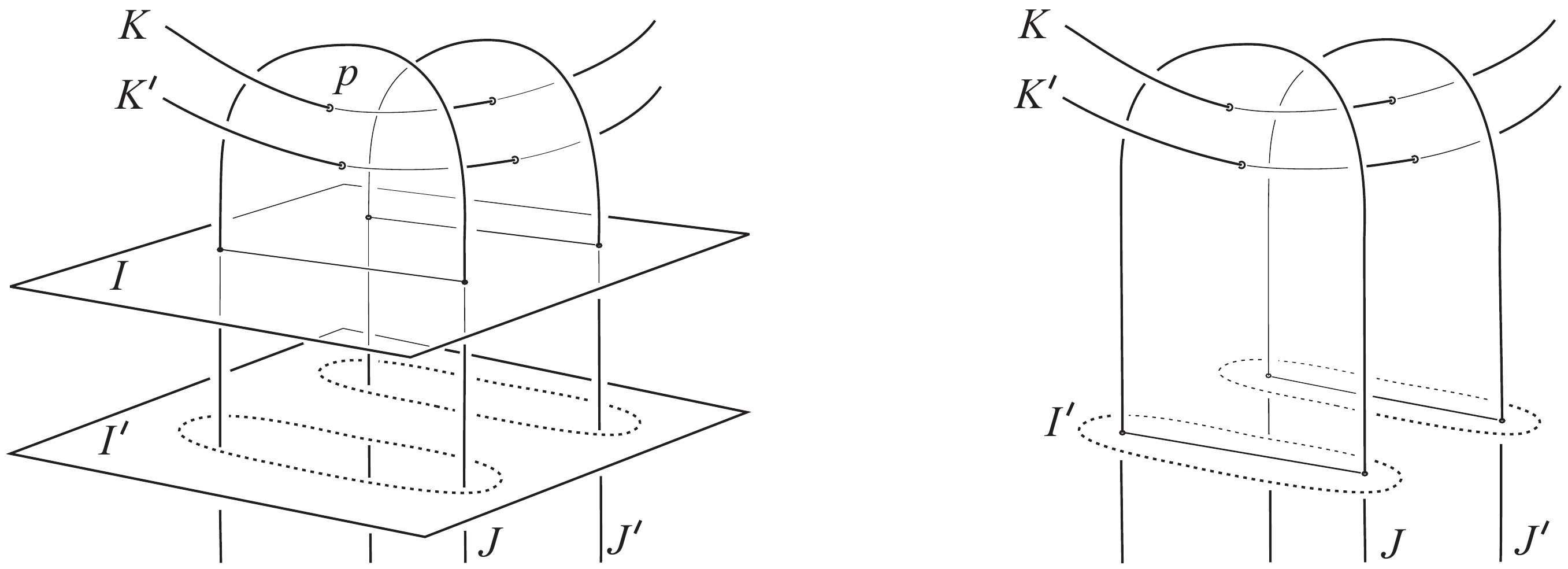}}
         \caption{Extending a Whitney tower using parallel Whitney disks: An unpaired intersection point 
         $p\in W_{(I,J)}\cap W_K$ in a Whitney tower on $A$, where each of $I$, $J$ 
         and $K$ contain exactly one occurrence of the index $m$, 
         gives rise to eight unpaired intersections in four Whitney disks in the 
         new Whitney tower on $A'$, formed from $A$ by including a parallel copy of $A_m$.
         The dotted oval loops (left) bound neighborhoods of the Whitney arcs in the opaque $I'$ sheet which have been perturbed into a nearby time coordinate along with the two corresponding translucent Whitney disks (right).}
         \label{fig:parallel-w-disks}
\end{figure}

Define $\delta\colon\cT_n(m)\to\cT_n(m+1)$ to 
be the homomorphism induced by the map which sends a generator $t$ having $r$-many $m$-labeled univalent vertices to the $2^{r}$-term sum over all choices of replacing the label $m$ by the label $(m+1)$.
Then $\tau_n(\cW')=\delta(\tau_n(\cW))$. (In the non-simply connected setting, group elements decorating the edges would be preserved by taking parallel whiskers.)

Via re-indexing, the effect of including a parallel copy of any $i$th order $0$ surface can be described by analogous relabeling maps $\delta_i$, and 
iterating this procedure constructs an order $n$ Whitney tower near $\cW$ on any number
of parallel copies of any order $0$ surfaces of $A$, with the resulting change in $\tau_n(\cW)$
described by compositions of the $\delta_i$ maps. 

\subsubsection{Internal sums}\label{subsec:connected-sums}
Suppose $A'$ is formed from $A$ by taking the ambient connected sum of $A_{m-1}$ with $A_m$ in $X$ 
(or by joining $\partial A_{m-1}$ to $\partial A_m$ with a band in $\partial X$), 
so that $A'$ has $m-1$ components.
Since it may be assumed that the interior of the arc guiding the sum is disjoint from $\cW$, 
it is clear that $A'$ bounds an order $n$ Whitney tower $\cW'$ all of whose Whitney disks and singularities are identical to $\cW$. 
Then $\tau_n(\cW')=\sigma(\tau_n(\cW))\in\cT_n(m-1)$, where the map 
$\sigma:\cT_n(m)\to\cT_n(m-1)$ is induced by the relabeling map on generators which changes all $m$-labeled 
univalent vertices to $(m-1)$-labeled univalent vertices. (In the non-simply connected setting, group elements decorating all edges would be preserved if the guiding arc together with the whiskers on $A_{m-1}$ and $A_m$ formed a null-homotopic loop.)

Via re-indexing, the effect of summing any $A_i$ with any $A_j$ ($j\neq i$) is described
by the analogous map $\sigma_{ij}$,
and for iterated internal sums the resulting intersection invariant
is described by compositions of the $\sigma_{ij}$ maps.

\subsubsection{Switching order $0$ surface orientations}\label{subsec:reorient-order-zero}
As explained in \cite[Sec.3]{ST2}, the orientation of $A$ determines the vertex-orientations of the
trees representing $\tau_n(\cW)$ up to AS relations, via our above convention (\ref{sec:orientations}). The effect 
on $\tau_n(\cW)$ of switching the orientation
of an order $0$ surface $A_i$ of $A$ is described as follows. 

Define $s_i\colon\cT_n(m)\to\cT_n(m)$ to 
be the automorphism induced by the map which sends a generator $t$
to $(-1)^{i(t)}t$, where again $i(t)$ denotes the multiplicity of the univalent label $i$ in $t$.
Then if $\cW'$ is a reorientation of $\cW$ which is compatible with a reversal of orientation of
$A_i$, then we have $\tau_n(\cW')=s_i(\tau_n(\cW))$. 

The effect on the intersection invariant of reorienting any number of order $0$ surfaces of $A$
is described by compositions of the $s_i$ maps.

\subsubsection{Deleting order $0$ surfaces}\label{subsec:deleting-order-zero}
The result $A'$ of deleting the last order $0$ surface $A_m$ of $A$ supports an order $n$ Whitney tower $\cW'$
formed by deleting those Whitney disks from $\cW$ which involve $A_m$; that is, deleting any Whitney disk whose tree has at least one univalent vertex labeled by $m$. We have $\tau_n(\cW')=e(\tau_n(\cW))$, where the homomorphism
$e\colon\cT_n(m)\to\cT_n(m-1)$ is induced by the map which sends a generator $t$
to zero if $m$ appears as a label in $t$, and is the identity otherwise. Via re-indexing, the effect of deleting any $A_i$ can be described by analogous maps $e_i$, and the change in $\tau_n(\cW)$ due to multiple deletions of order $0$ surfaces is
described by compositions of the $e_i$. 

\subsubsection{Canceling parallels}\label{subsubsec:canceling-parallel-lemma}
We note here the following easily-checked lemma, which will be used in Section~\ref{sec:proof-thm-in-X-L}:
\begin{lem}\label{lem:canceling-parallels}
The composition $\sigma_{ji'}\circ\sigma_{i'i''}\circ s_{i''}\circ\delta_{i'}\circ\delta_{i}$ is the identity map
on $\cT_n(m)$.\hfill$\square$
\end{lem}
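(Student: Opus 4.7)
The plan is to verify the lemma by a direct combinatorial computation on a single generator $t\in\cT_n(\pi,m)$, using that each of the five maps in the composition is induced by an explicit operation on univalent vertex labels while leaving the underlying tree and its edge decorations untouched. Let $V_i$ denote the set of univalent vertices of $t$ labeled $i$, with $|V_i|=k$, and fix the conventions that $\sigma_{i'i''}$ relabels $i''\mapsto i'$ and $\sigma_{ji'}$ relabels $i'\mapsto j$, so that $i',i''$ are the two new indices introduced (in order) by $\delta_i$ and $\delta_{i'}$.

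First, I would expand $\delta_{i'}\circ\delta_i(t)$: by the definition of $\delta$, each original $i$-vertex is independently assigned a label from $\{i,i',i''\}$, producing the sum $\sum_f t|_f$ over all $3^k$ functions $f\colon V_i\to\{i,i',i''\}$, each with coefficient $+1$. Applying $s_{i'}$ weights each summand by $(-1)^{|f^{-1}(i')|}$. The central step is $\sigma_{i'i''}$, which collapses $i''$ onto $i'$ and hence groups terms by the induced function $f'\colon V_i\to\{i,i'\}$; for fixed $f'$ with $j:=|(f')^{-1}(i')|$, the $2^j$ lifts to functions into $\{i,i',i''\}$ contribute $\sum_{S\subseteq (f')^{-1}(i')}(-1)^{|S|}=(1-1)^j$, which vanishes unless $j=0$. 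Hence only the constant map $f'\equiv i$ survives, with coefficient $+1$, returning the original tree $t$.

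Finally, $\sigma_{ji'}$ acts as the identity on $t$ because the surviving tree contains no $i'$-labels, so the full composition is the identity on the chosen generator and, by linearity, on all of $\cT_n(\pi,m)$. The geometric picture behind the algebra is that $\delta_{i'}\circ\delta_i$ produces two parallel copies of $A_i$, $s_{i'}$ reverses the orientation of one, and $\sigma_{i'i''}$ bands the oppositely-oriented parallels into a null-homotopic summand which $\sigma_{ji'}$ then absorbs into $A_j$ to restore the component count; the algebra reflects this cancellation through the binomial identity $(1-1)^j=0$ for $j>0$. The main obstacle is simply keeping signs and label conventions straight under the lift-and-sum step over preimages of $\sigma_{i'i''}$, which that identity resolves automatically.
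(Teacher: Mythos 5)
Your computation is correct and supplies exactly the verification the paper leaves to the reader (the lemma is prefaced by ``the following easily-checked lemma,'' with no written proof). Expanding $\delta_{i'}\circ\delta_i$ as a sum over functions $f\colon V_i\to\{i,i',i''\}$, reading off the sign $(-1)^{|f^{-1}(i')|}$ from $s_{i'}$, and then grouping lifts of each collapsed map $f'\colon V_i\to\{i,i'\}$ under $\sigma_{i'i''}$ to obtain the vanishing alternating sum $(1-1)^{|(f')^{-1}(i')|}$ is the natural argument; your conventions for $\sigma$ (second subscript relabeled to first) match the paper's prototype $\sigma\colon m\mapsto m-1$, and you correctly observe that edge decorations are unaffected throughout.

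One small remark, not a defect in your proof: the paper's explanatory sentence following the lemma says the orientation switch is on the second parallel copy $A''_i$ (which would be $s_{i''}$), whereas the lemma as stated uses $s_{i'}$. The two versions are equivalent --- in your group-by-$f'$ step, the sign $(-1)^{|U|}$ runs over the subset $U$ of $(f')^{-1}(i')$ on which the lift chooses $i'$ (for $s_{i'}$) or its complement (for $s_{i''}$), and either way the alternating sum vanishes for $j>0$ --- so proving the stated $s_{i'}$ version, as you did, is the right call.
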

Lemma~\ref{lem:canceling-parallels} describes the effect on the intersection invariant that corresponds to including two parallel copies $A'_i$ and $A''_i$ of $A_i$, switching the orientation on $A''_i$, then recombining $A'_i$ and $A''_i$ by an internal sum into a single $i'$th component, and then internal summing this combined $i'$th component into any $j$th component of $A$.
(Note that applying the analogous sequence of operations to a link obviously preserves the isotopy class of the link.)

\section{Proof of Theorem~\ref{thm:tower-separates}}\label{sec:thm-tower-separates-proof}
We want to show that $m$ connected surfaces $A_i:\Sigma_i\to
 X$ can be pulled apart if and only if they admit an order $m-1$ non-repeating Whitney tower.
 
\emph{Proof:}  
The ``only if'' direction follows by definition, since disjoint order $0$ surfaces form a non-repeating Whitney tower of any order.
So let $\cW$ be a non-repeating Whitney tower of order $m-1$ on $A_1,
A_2,\ldots,A_m$. If $\cW$ contains no Whitney disks, then the
$A_i$ are pairwise disjoint. In case $\cW$ does contain Whitney
disks, we will describe how to use finger moves and Whitney moves
to eliminate the Whitney disks of $\cW$ while preserving the
non-repeating order $m-1$. 

First note that $\cW$ contains no unpaired non-repeating intersections: All non-repeating intersections of order $<m-1$ are paired 
by definition; and since trees of order $\geq m-1$ have $\geq m+1$ univalent vertices, all intersections of
order greater than or equal to $m-1$ in any Whitney tower on $m$ order $0$ surfaces 
must be repeating intersections.

Now consider a Whitney disk $W_{(I,J)}$ in $\cW$ of maximal order. If $W_{(I,J)}$ is \emph{clean}
(the interior of $W_{(I,J)}$ contains no singularities) then do the
$W_{(I,J)}$-Whitney move on either $W_I$ or $W_J$. This eliminates
$W_{(I,J)}$ (and the corresponding canceling pair of intersections between
$W_I$ and $W_J$) while creating no new intersections, hence preserves the order of the resulting non-repeating Whitney tower
which we continue to denote by $\cW$.

If any maximal order Whitney disk $W_{(I,J)}$ in $\cW$ is not
clean, then the singularities in the interior of $W_{(I,J)}$ are
exactly a finite number of unpaired intersection points, all of
which are repeating. (Since $W_{(I,J)}$ is of maximal order, the
interior of $W_{(I,J)}$ contains no Whitney arcs; and $\cW$ contains no unpaired non-repeating intersections, as noted above.)
So, for any $p\in W_{(I,J)}\cap W_K$, at least one of $(I,K)$ or
$(J,K)$ is a repeating bracket. Assuming that $(I,K)$, say, is
repeating, push $p$ off of $W_{(I,J)}$ down into $W_I$ by a finger
move (Figure~\ref{fig:PushDownIJK}). This creates only a pair of
repeating intersections between $W_I$ and $W_K$. After pushing
down all intersections in the interior of $W_{(I,J)}$ by
finger moves in this way, do the clean $W_{(I,J)}$-Whitney move on
either $W_I$ or $W_J$. Repeating this procedure on all maximal
order Whitney disks eventually yields the desired order $m-1$
non-repeating Whitney tower (with no Whitney disks) on order $0$ surfaces
$A_i'$. The $A_i'$ are regularly homotopic to the $A_i$; the pushing-down finger moves
will have created pairs of self-intersections in the pairwise disjointly immersed $A_i'$.
$\hfill\square$
\begin{figure}[ht!]
         \centerline{\includegraphics[scale=.45]{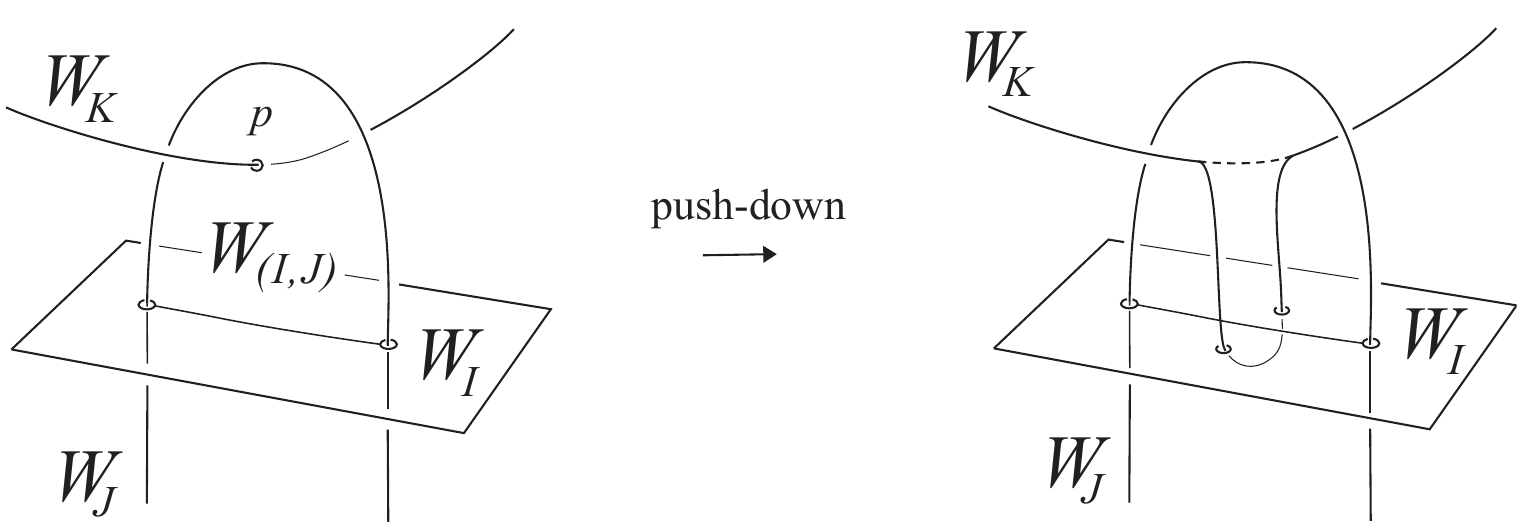}}
         \caption{}
         \label{fig:PushDownIJK}
\end{figure}

\section{Proof of Theorem~\ref{thm:lambda-link-htpy}}\label{sec:link-htpy-thm-proof}
Consider a link $L=L_1\cup L_2\cup\cdots\cup L_m\subset S^3$ that bounds an order $n$ non-repeating Whitney tower $\cW$ on immersed disks in the $4$--ball. We will prove Theorem~\ref{thm:lambda-link-htpy} by relating $\lambda_n(\cW)$ to Milnor's length $n+2$ link-homotopy $\mu$-invariants of $L$ in Theorem~\ref{thm:mu}, showing in particular that $\lambda_n(L):=\lambda_n(\cW)\in\Lambda_n(m)$ only depends on the link-homotopy class of $L$ (and not on the Whitney tower $\cW$). 

The essential idea is that $\cW$ can be used to compute the link longitudes as iterated commutators in
Milnor's nilpotent quotients of the fundamental group of the link
complement.
The proof uses a new
result, \emph{Whitney tower-grope duality}, which describes certain class $n+2$ gropes
that live in the complement of an order $n$ Whitney tower in any $4$--manifold (Proposition~\ref{prop:w-tower-duality}).
After fixing notation for the first-non-vanishing Milnor invariants of $L$ in section~\ref{sec:mu-invts}, we give the explicit
identification of them with $\lambda_n(\cW)$ in Theorem~\ref{thm:mu} of section~\ref{subsec:trees-to-brackets-main-Milnor-thm}. 

\subsection{Milnor's link-homotopy $\mu$-invariants}\label{sec:mu-invts}
This subsection briefly reviews and fixes notation for the first non-vanishing non-repeating $\mu$-invariants of a link.
See any of
\cite{B2,HL,HM,M1} for details.  

For a group $G$ {\em normally} generated by elements $g_1,g_2,\ldots,g_m$, the
\emph{Milnor group} of $G$ (with respect to the $g_i$) is
the quotient of $G$ by the subgroup normally generated by all commutators between $g_i$ and $g_i^h:=hg_ih^{-1}$, so we kill the elements
$$
[g_i,g_i^h]=g_ig_i^hg_i^{-1}g_i^{-h}
$$
for $1\leq i\leq m$, and all $h\in G$.
One can prove (e.g.~\cite[Lem.1.3]{HL}) by induction on $m$ that this quotient is nilpotent and (therefore) generated by $g_1,\dots,g_m$. 

The \emph{Milnor group} $\cM (L)$ of an $m$-component link $L$ is
the Milnor group of the fundamental group of the link complement $\pi_1(S^3\smallsetminus L)$ with respect to a
generating set of meridional elements. Specifically, $\cM(L)$ has
a presentation
$$
\cM(L)=\langle x_1,x_2,\ldots,x_m\,|\,[\ell_i,x_i], \,
[x_j,x_j^h]\rangle
$$
where each $x_i$ is represented by a meridian (one for each component), 
and the $\ell_i$ are words in the 
$x_i$ determined by the link longitudes. The Milnor group
$\cM(L)$ is the largest common quotient of the fundamental groups
of all links which are link homotopic to $L$. Since $\cM(L)$ only depends on the
conjugacy classes of the meridional generators $x_i$, it only depends on the link $L$ (and no base-points are necessary). 

A presentation for the Milnor group of the unlink (or any
link-homotopically trivial link) corresponds to the case where all
$\ell_i=1$, and Milnor's $\mu$-invariants (with non-repeating
indices) compare $\cM(L)$ with this \emph{free Milnor group}
$\cM(m)$ by examining each longitudinal element in terms of the
generators corresponding to the \emph{other} components.
Specifically, mapping $x_i^{\pm 1}$ to $\pm X_i$ induces a canonical isomorphism 
\[
\cM(m)_{(n)}/\cM(m)_{(n+1)} \cong \sRL_n(m)
\] 
from the lower central series quotients
to the \emph{reduced} free Lie algebra $\sRL(m)=\oplus_{n=1}^m \sRL_n(m)$, which is the quotient of
the free $\Z$-Lie algebra on the $X_i$ by the relations which set an iterated
Lie bracket equal to zero if it contains more than one occurrence of a generator. This isomorphism takes 
a product of length $n$ commutators in distinct $x_i$ to a sum of length $n$ Lie brackets
in distinct $X_i$. In particular, $\sRL_n(m)=0$ for $n>m$. 

Let $\cM^i(L)$ denote the quotient of $\cM(L)$ by the relation $x_i=1$. If
the element in $\cM^i(L)$ determined by the longitude $\ell_i$
lies in the $(n+1)$th lower central subgroup $\cM^i(L)_{(n+1)}$ for
each $i$, then we have isomorphisms:
$$
\cM(L)_{(n+1)}/\cM(L)_{(n+2)}\cong\cM(m)_{(n+1)}/\cM(m)_{(n+2)}\cong
\sRL_{(n+1)}(m).
$$
Via the usual identification of non-associative bracketings and binary trees, 
$\sRL_{(n+1)}(m)$ can be identified with the abelian group on order $n$ \emph{rooted} non-repeating trees
modulo IHX and antisymmetry relations as in Figure~\ref{fig:Relations-fig} (with $\pi$ trivial). 
This identification explains the subscripts in the following definition:
\begin{defn}\label{def:mu(L)}
The elements $\mu^i_n(L)\in \sRL^i_{(n+1)}(m)$
determined by the longitudes $\ell_i$ are the  \emph{non-repeating Milnor-invariants} of \emph{order} $n$. Here
$\sRL^i(m)$ is the reduced free Lie algebra on the $m-1$ generators $X_j$, for $j\neq i$.
\end{defn}

This definition of non-repeating $\mu$-invariants was originally given by Milnor \cite{M1}. He later expressed the elements $\mu^i_n(L)$ in terms of integers $\mu_L(i, k_1, \dots, k_{n+1})$, which are the coefficients of $X_{k_1}\cdots X_{k_{n+1}}$ in the Magnus expansion of $\ell_i$.
We note that our \emph{order} $n$ corresponds to the originally used \emph{length} $n+2$ (of entries in $\mu_L$). 

By construction, these non-repeating $\mu$-invariants depend only on the link-homotopy class of the link $L$. We have only defined order $n$ $\mu$-invariants assuming that the lower-order $\mu$-invariants vanish, which will turn out to be guaranteed by the existence of an order $n$ non-repeating Whitney tower.

\subsection{Mapping from trees to Lie brackets}\label{subsec:trees-to-brackets-main-Milnor-thm}
For each $i$, define a map 
\[
\eta_n^i :\Lambda_n(m)\to \sRL^i_{(n+1)}(m)
\]
by sending a tree $t$ which has an
$i$-labeled univalent vertex $v_i$ to the iterated bracketing determined
by $t$ with a root at $v_i$. Trees without an $i$-labeled vertex
are sent to zero. For example, if $t$ is an order $1$ $Y$-tree
with univalent labels $1,2,3$, and cyclic vertex orientation
$(1,2,3)$, then $\eta_1^1(t)=[X_2,X_3]$, and
$\eta_1^3(t)=[X_1,X_2]$, and $\eta_1^2(t)=[X_3,X_1]$. Note that the 
IHX and AS relations in $\Lambda_n(m)$
go to the Jacobi and skew-symmetry relations in
$\sRL^i_{(n+1)}(m)$, so the maps $\eta_n^i$ are well-defined.

\begin{lem}\label{lem:RL}
\[
\sum_{i=1}^m \eta_n^i :\Lambda_n(m)\longrightarrow \oplus_{i=1}^m \sRL^i_{(n+1)}(m)
\]
is a monomorphism. 
\end{lem}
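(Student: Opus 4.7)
The strategy is to reduce to a single ``multilinear'' summand and then invoke the classical fact about non-repeating Lie brackets that was already used in the proof of Lemma~\ref{lem:Lambda-computation}.

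First, since every relation defining $\Lambda_n(m)$ is homogeneous in the univalent labels, the group decomposes as
\[
\Lambda_n(m)=\bigoplus_{S}\Lambda_n(S),
\]
where $S$ ranges over $(n+2)$-element subsets of $\{1,\dots,m\}$ and $\Lambda_n(S)$ is generated by those non-repeating order $n$ trees whose univalent labels are exactly the elements of $S$. Accordingly, for each $i$, the target $\sRL^i_{(n+1)}(m)$ contains the ``multilinear'' subspaces $\sRL^i_{(n+1)}(S\setminus\{i\})\subset \sRL^i_{(n+1)}(m)$ consisting of Lie brackets in which every generator $X_j$ with $j\in S\setminus\{i\}$ appears exactly once. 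The key observation is that $\eta_n^i$ carries $\Lambda_n(S)$ into $\sRL^i_{(n+1)}(S\setminus\{i\})$ when $i\in S$, and to zero when $i\notin S$; and for distinct subsets $S\neq S'$ the multilinear subspaces $\sRL^i_{(n+1)}(S\setminus\{i\})$ and $\sRL^i_{(n+1)}(S'\setminus\{i\})$ intersect trivially inside $\sRL^i_{(n+1)}(m)$.

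Given $\alpha=\sum_S\alpha_S\in\Lambda_n(m)$ with $\sum_i\eta_n^i(\alpha)=0$, projecting the equation to the multilinear subspace $\sRL^i_{(n+1)}(S\setminus\{i\})$ of the $i$-th summand (for any choice of $i\in S$) therefore isolates the single contribution $\eta_n^i(\alpha_S)=0$. Thus it suffices to show that, for each fixed $S$ of size $n+2$ and each fixed $i\in S$, the restricted map
\[
\eta_n^i\colon\Lambda_n(S)\longrightarrow \sRL^i_{(n+1)}(S\setminus\{i\})
\]
is injective.

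This last step is where the real content lies, but it was essentially established already in the proof of Lemma~\ref{lem:Lambda-computation}: the bijection between non-associative bracketings and rooted binary trees sends $\eta_n^i$ (placing the root at the $i$-labeled vertex) to the tautological map from order $n$ non-repeating trees to length $n+1$ Lie brackets on $\{X_j:j\in S\setminus\{i\}\}$, carrying AS and IHX to skew-symmetry and Jacobi. Both sides are free abelian of rank $n!$ — the target by \cite[Thm.5.11]{MKS}, the source by Lemma~\ref{lem:Lambda-computation} — and the spanning argument of that lemma (reduce any tree to a linear combination of the $n!$ simple trees of Figure~\ref{fig:simple-tree-decorated} via AS and IHX moves along the geodesic) shows that $\eta_n^i$ sends the simple-tree basis of $\Lambda_n(S)$ onto the standard multilinear basis of $\sRL^i_{(n+1)}(S\setminus\{i\})$. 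Hence $\eta_n^i$ is in fact an isomorphism on each $\Lambda_n(S)$, and the main claim follows. The only step requiring care is keeping the bookkeeping of signs and orientations consistent across AS/IHX reductions, but this is exactly the bookkeeping already carried out for Lemma~\ref{lem:Lambda-computation}.
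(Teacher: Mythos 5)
Your proof is correct, but it takes a genuinely different route from the paper's. The paper's argument is more economical: for each $i$ it defines the ``replace the root by an $i$-label'' map $\rho^i\colon\sRL^i_{(n+1)}(m)\to\Lambda_n(m)$, observes that $(\sum_i\rho^i)\circ(\sum_i\eta_n^i)$ is multiplication by $n+2$ on $\Lambda_n(m)$ (each non-repeating order-$n$ tree has exactly $n+2$ univalent labels, each contributing one copy of $t$ after root removal), and then concludes injectivity from the torsion-freeness of $\Lambda_n(m)$ supplied by Lemma~\ref{lem:Lambda-computation}. It never needs to know that any individual $\eta_n^i$ is injective on a piece. Your argument instead decomposes source and target by multidegree and establishes the stronger statement that $\eta_n^i\colon\Lambda_n(S)\to\sRL^i_{(n+1)}(S\setminus\{i\})$ is an isomorphism for every $i\in S$, not just the minimal label used in the proof of Lemma~\ref{lem:Lambda-computation}. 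This does go through (the isomorphism for an arbitrary root label follows from the minimal-label case by a relabeling automorphism on both sides), and it yields extra information: it identifies the image of $\sum_i\eta_n^i$ as the diagonal copy of $\Lambda_n(S)$ inside $\oplus_{i\in S}\sRL^i_{(n+1)}(S\setminus\{i\})$. The trade-off is that your route invokes the full force of the free-Lie-algebra identification, whereas the paper only needs its corollary that $\Lambda_n(m)$ has no torsion, plus the near-tautological fact that root-removal and root-insertion compose to multiplication by the number of leaves. If you keep your version, it would be worth explicitly noting that the relabeling automorphisms of $\Lambda_n(S)$ and of $\sRL_{(n+1)}$ intertwine the various $\eta_n^i$, so that the isomorphism claim for $i=i_{\min}$ propagates to all $i\in S$ --- that step is currently a little compressed in the phrase about ``sending the simple-tree basis onto the standard multilinear basis,'' since for $i\neq i_{\min}$ the simple trees of Figure~\ref{fig:simple-tree-decorated} do not literally map to the standard basis without a further change of basis.
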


\emph{Proof:}  
Putting an $i$-label in place of the root in a tree corresponding to a Lie bracket in
$\sRL^i_{(n+1)}(m)$ gives a left inverse to $\eta_n^i$. In fact, for the top degree $n+2=m$, this is an inverse because every index $i$ appears exactly once in a tree $t$ of order $n=m-2$. 
For arbitrary $n$, it is easy to check that composing the sum of these left inverse maps with $\sum_{i=1}^m \eta_n^i $ is just multiplication by $n+2$ on $\Lambda_n(m)$.
Since $\Lambda_n(m)$ is torsion-free by Lemma~\ref{lem:Lambda-computation}, it follows that $\sum_{i=1}^m \eta_n^i$ is injective.
$\hfill\square$

\begin{rem}\label{rem:repeating}
The monomorphism $\sum_{i=1}^m \eta_n^i$ fits into the bottom row of a commutative diagram:
\[
\xymatrix{
\cT_n(m)\ar@{>->}[r]^{\eta_n} \ar@{->>}[d] & \ar@{->>}[d]  \oplus_{i=1}^m \sL_{(n+1)}(m)\\
 \Lambda_n(m) \ar@{>->}[r] & \oplus_{i=1}^m \sRL^i_{(n+1)}(m) 
}
\]

Here the upper row is relevant for {\em repeating} Milnor invariants as explained in \cite{CST0,CST1}. The injectivity of the top horizontal map $\eta_n$, defined by Jerry Levine, is much harder to show and is the central result of \cite{CST3} (implying that $\cT_n(m)$ has at most 2-torsion). The two vertical projections simply set trees with repeating labels to zero. 
\end{rem}

The maps $\eta_n^i$ correspond to tree-preserving geometric constructions which
desingularize an order $n$ Whitney tower to a collection of
class $n+1$ \emph{gropes}, as described in detail in
\cite{S1}, and sketched in section~\ref{subsec:w-tower-to-grope-review} below. Gropes are 2-complexes built by gluing together compact
orientable surfaces, and this correspondence will be used in
the proof of the following theorem:

\begin{thm}\label{thm:mu} If a link $L\subset S^3$ bounds a
non-repeating Whitney tower $\cW$ of order $n$ on immersed disks
$D=\amalg^m D^2\to
 B^4$, then for each $i$ the longitude $\ell_i$
lies in $\cM^i(L)_{(n+1)}$, and
\[
\eta_n^i(\lambda_n(\cW))=\mu^i_n(L) \ \in \sRL^i_{(n+1)}(m)
\]
\end{thm}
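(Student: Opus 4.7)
The plan is to use the Whitney tower $\cW$ to express each longitude $\ell_i$ explicitly as a product of length-$(n+1)$ iterated commutators of meridians of $L$ in the Milnor quotient $\cM^i(L)/\cM^i(L)_{(n+2)}$, and then read off the invariant. The core geometric input is the Whitney tower to grope desingularization of \cite{S1} together with the advertised Whitney tower--grope duality (Proposition~\ref{prop:w-tower-duality}): for each unpaired intersection $p \in \cW$ and each choice of univalent vertex $v$ of $t_p$, one can surger the Whitney disks of $\cW$ along parallel pushoffs to produce a framed capped grope of class equal to the order of $t_p$ plus one, whose rooted tree type (rooted at $v$) is $t_p$ rooted at $v$, and whose top-stage tips are parallel pushoffs of the sheets at the other univalent vertices of $t_p$.

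Applying this construction to a parallel (longitudinal) pushoff of $D_i$ in $B^4$ yields a class $n+1$ grope $G_i \subset B^4$ with $\partial G_i = \ell_i \subset S^3$. The summands of $G_i$ are indexed by the unpaired intersection points $p \in \cW$ whose tree $t_p$ carries an $i$-label, the root being placed at that $i$-labeled vertex. The remaining tips of $G_i$ are parallel pushoffs of sheets of $D$, and can be isotoped into meridional disks of $L$ pushed into $S^3 \setminus L$. The standard grope-to-commutator dictionary then expresses $\ell_i$ in $\pi_1(S^3\setminus L)$ as a product of signed iterated commutators in conjugates of meridians $x_j$, with one factor per contributing unpaired intersection $p$: the commutator pattern is exactly the bracket $\eta_n^i(t_p)$ obtained by rooting $t_p$ at its $i$-labeled vertex, and the sign is $\sign(p)$. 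In particular this shows $\ell_i \in \cM^i(L)_{(n+1)}$.

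Passing to the Milnor quotient, the relations $[x_j,x_j^h]=1$ kill any iterated commutator containing a repeated generator, so contributions from \emph{repeating} trees drop out, as do any contributions from higher-order unpaired intersections introduced during the grope construction (those lie in $\cM^i(L)_{(n+2)}$ because their trees have order $>n$, hence length $>n+2$). Under the canonical isomorphism $\cM^i(L)_{(n+1)}/\cM^i(L)_{(n+2)} \cong \sRL^i_{(n+1)}(m)$, the conjugation ambiguity in the meridians disappears, and the AS/IHX relations on trees match the skew-symmetry/Jacobi relations on Lie brackets (so $\eta_n^i$ is the correct algebraic shadow of the geometric grope-to-commutator procedure). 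What survives is precisely $\sum_p \sign(p)\,\eta_n^i(t_p) = \eta_n^i(\lambda_n(\cW))$, which by Definition~\ref{def:mu(L)} equals $\mu_n^i(L)$.

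The main obstacle will be establishing the Whitney tower--grope duality statement (Proposition~\ref{prop:w-tower-duality}) and carrying out the associated sign and framing bookkeeping. Specifically, one must check that the negative-corner orientation convention of \S\ref{sec:orientations}, the framings on the Whitney disks, and the choice of whiskers translate, via the surgery and pushoff operations, into the correct signed iterated commutator of meridians; this is where AS and IHX in $\Lambda_n(m)$ must match skew-symmetry and Jacobi in $\sRL^i_{(n+1)}(m)$ on the nose. Once this geometric dictionary is pinned down, the algebraic identifications and the pass to the Milnor quotient follow in a routine manner from the definitions of $\cM(L)$ and $\mu_n^i(L)$.
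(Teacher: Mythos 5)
Your outline of Step~(ii) --- resolving the $i$th disk and its associated Whitney disks into a class $n+1$ grope $G_i$ and reading off the rooted trees $\eta_n^i(t_p)$ --- matches the paper's construction, and your observation that repeating trees and higher-order residue vanish in the Milnor quotient is correct. But there is a genuine gap in how you close the argument, and it is precisely the step the paper's new Proposition~\ref{prop:w-tower-duality} (Whitney tower--grope duality) is designed to handle.

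The grope $G_i$ produced by the Whitney-tower-to-grope desingularization lives in $B^4$, and it exhibits $\ell_i$ as a product of iterated commutators of meridians \emph{in the group $\pi_1(B^4\smallsetminus \cW^i)$}, where $\cW^i$ is the subtower on the disks $D^i = \cup_{j\neq i}D_j$ that was not consumed by the grope construction. What is needed, however, is the commutator expression for $\ell_i$ in $\pi_1(S^3\smallsetminus L)$, since that is where $\mu_n^i(L)$ is defined. Your proposal asserts that the tips ``can be isotoped into meridional disks of $L$ pushed into $S^3\smallsetminus L$,'' but this does not address the actual issue: the inclusion $S^3\smallsetminus \partial D^i \hookrightarrow B^4\smallsetminus \cW^i$ need not be a $\pi_1$-isomorphism, because the $D_j$ are only immersed and generally intersect each other, and the Whitney disks of $\cW^i$ also change the topology of the complement. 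So a relation exhibited by $G_i$ in the $4$-dimensional complement could a priori not hold in the link group at the relevant nilpotent level.

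This is where the paper has to work. It invokes Dwyer's theorem: the kernel of the map induced on $\pi_1$ by the inclusion $S^3\smallsetminus \partial D^i \to B^4\smallsetminus \cW^i$ (after abelianization checks) is generated by attaching maps of surfaces generating $H_2(B^4\smallsetminus \cW^i)$. The content of Whitney tower--grope duality is to produce, for \emph{each} generator of $H_1(\cW^i,\partial D^i) \cong H_2(B^4\smallsetminus \cW^i)$ (by Alexander duality), a dyadic grope in the complement of class $\geq n+2$, so that the extra relations lie in the $(n+2)$th lower central series term or are Milnor relations (Clifford tori). Only then is the Milnor group quotient $\cM^i(L)/\cM^i(L)_{(n+2)}$ computable in the $4$-dimensional complement. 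In your write-up you appear to conflate Proposition~\ref{prop:w-tower-duality} with the Whitney-tower-to-grope construction of \cite{S1} and relegate it to ``sign and framing bookkeeping''; in fact it is the mechanism that bridges the $B^4$-complement and the $S^3$-complement, and without it (or some substitute argument) the final identification with $\mu_n^i(L)$ does not follow.

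Secondarily, the paper first cleans the tower so that all positive-order repeating intersections are pushed down to self-intersections of the order zero disks (Step~(i)). This is not merely cosmetic: the grope construction and the duality argument are carried out on this cleaned-up tower, and your approach of letting repeating trees ``drop out algebraically'' in the Milnor quotient does not by itself guarantee that the geometric grope $G_i$ one actually builds has the claimed tree type. You should either reproduce that clean-up step or argue that the construction is unaffected by the presence of repeating Whitney disks.
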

Since the sum of the $\eta_n^i$ is injective, this will prove Theorem~\ref{thm:lambda-link-htpy}: The intersection invariant $\lambda_n(\cW) \in \Lambda_n(m)$ does not depend on the Whitney tower $\cW$ and is a link homotopy invariant of $L$, denoted by $\lambda_n(L)$. 

For $L$ bounding an honest order $n$ Whitney tower, one can deduce this theorem from the main result in \cite[Thm.5]{CST2} (and the diagram in Remark~\ref{rem:repeating} above); but here we only have a non-repeating order $n$ Whitney tower as an input.

\emph{Proof:}  
We start by giving an outline of the argument, introducing some notation that will be clarified during the proof: 
\begin{enumerate}
\item First the Whitney tower will be cleaned up, including the elimination of all repeating intersections of positive order and all repeating Whitney disks,
to arrive at an order $n$ non-repeating Whitney tower $\cW$ bounded by $L$ such that all unpaired intersection points of positive order have non-repeating trees (so the only repeating intersections are self-intersections in the order $0$ disks $D_j$). 

\item Then the preferred order $0$ disk $D_i$ (and all Whitney disks involving $D_i$) will be resolved to a grope $G_i$ of class $n+1$ bounded by $L_i$, such that
$G_i$ is in the complement $B^4\smallsetminus \cW^i$, where $\cW^i$ is the result of deleting $D_i$ and the Whitney disks used to construct $G_i$ from $\cW$.  
The grope $G_i$ will display the longitude $\ell_i$ in $\pi_1(B^4\smallsetminus \cW^i)$ as a product of $(n+1)$-fold commutators of meridians to the order $0$ surfaces $D^i:=\cup_{j\neq i}D_j$ of $\cW^i$ corresponding to 
putting roots at all $i$-labeled vertices of the trees representing $\lambda_n(\cW)$. This is the same formula as in the definition of the map $\eta_n^i$,
so it only remains to show that $\mu_n^i(L)$ can be computed in $\pi_1(B^4\smallsetminus \cW^i)$.

\item This last step is accomplished by using \emph{Whitney tower-grope duality} (Proposition~\ref{prop:w-tower-duality}) and Dwyer's theorem \cite{Dw} to show that 
the inclusion
$S^3\smallsetminus \partial D^i\to B^4\smallsetminus \cW^i$ induces an isomorphism
on the Milnor groups modulo the $(n+2)$th terms of the lower central series.
\end{enumerate}

Step (i):
Let $\cW$ be an order $n$ non-repeating Whitney tower on $D\to B^4$ bounded by $L\subset S^3$. As described in
\cite[Lem.3.5]{S1} (or \cite[Lem.13]{ST2}), $\cW$ can
be \emph{split}, so that each Whitney disk of $\cW$ is embedded, and the interior of each Whitney disk contains either a single unpaired intersection 
$p$ or a single
boundary arc of a higher-order Whitney disk, and no other
singularities.
This splitting process does not change the trees representing $\lambda_n(\cW)$, and results in each tree $t_p$ associated to
an order $n$ intersection $p$ being contained in a $4$-ball thickening of $t_p$, with all these $4$--balls pairwise disjoint.
Splitting simplifies combinatorics, and facilitates the use of local coordinates for describing constructions.
Also, split Whitney towers correspond to \emph{dyadic} gropes (whose upper stages are all genus one), and dyadic gropes are parametrized by trivalent (rooted) trees.
 
We continue to denote the split order $n$ non-repeating Whitney tower by $\cW$, and will
keep this notation despite future modifications. In the following, further splitting may be performed without mention. 
 
If $\cW$ contains any repeating intersections of positive order, then by following the pushing-down procedure described in the proof of 
Theorem~\ref{thm:tower-separates} given in section~\ref{sec:thm-tower-separates-proof}, all these repeating intersections
can be pushed-down until they create (many) pairs of self-intersections in the order $0$ disks. Then all repeating Whitney disks are clean,
and by doing Whitney moves guided by these clean Whitney disks it can be arranged that $\cW$ contains no repeating Whitney disks and no repeating intersections of positive order. 
 
Step (ii):
Consider now the component $L_i$ bounding $D_i$. We want to  
convert $D_i$ into a class $n+1$ grope displaying the longitude $\ell_i$ as a product of $(n+1)$-fold iterated commutators in meridians 
to the $D_{j\neq i}$ using the tree-preserving Whitney tower-to-grope construction of \cite[Thm.5]{S1}. 
This construction is sketched roughly below in section~\ref{subsec:w-tower-to-grope-review}, and a simple case is illustrated in Figure~\ref{bing-hopf-example-tower-and-grope-with-tree}.
Actually, the resulting grope $G_i$ comes with \emph{caps}, which in this setting are embedded normal disks to the other $D_j$ which are bounded by essential circles called \emph{tips}
on $G_i$. For our purposes the caps only serve to show that these tips are meridians to
the $D_j$. The trees associated to gropes are rooted trees, with the root vertex corresponding to the bottom stage surface, and the other univalent vertices corresponding to the tips (or to the caps). Since $\cW$ was split, the upper surface stages of $G_i$ will all be genus one,
so the collection of order $n$ unitrivalent trees $t(G_i)$ associated to $G_i$ will contain one tree for each dyadic branch of upper stages, with each trivalent vertex of a tree corresponding to a genus one surface in a branch. In this setting the class of $G_i$ is equal to $n+1$,
the number of non-root univalent vertices in each tree in the collection $t(G_i)$ (see e.g.~\cite[Sec.2.3]{S1}).

Applying the construction of \cite[Thm.5]{S1} to $D_i$ converts $D_i$ and all the Whitney disks of $\cW$ corresponding to trivalent vertices in trees containing an $i$-label into a class $n+1$ grope $G_i$. This grope $G_i$ (without the extra caps provided by \cite[Thm.5]{S1}) 
is disjoint from $\cW^i\subset \cW$, where the order $n$ 
non-repeating Whitney tower $\cW^i$ consists of the order $0$ immersed disks $D^i:=\cup_{j\neq i}D_j$ together with the Whitney disks 
of $\cW$ whose trees do not have an $i$-labeled vertex. In the present setting, any self-intersections of $D_i$ will give rise to self-intersections in the bottom stage surface of $G_i$ (which is bounded by $L_i$), but all higher stages of $G_i$ will be embedded.

At the level of trees, this construction of $G_i$ corresponds to replacing each $i$-labeled vertex on a tree representing
$\lambda_n(\cW)$ with a root \cite[Thm.5(v)]{S1}, which is the same formula as the map on generators defining $\eta_n^i$
(signs and orientations are checked in \cite[Lem.31]{CST2} and \cite[Sec.4.2]{CST2} in the setting of repeating Milnor invariants; see also sketch in section~\ref{subsec:w-tower-to-grope-review} below). 
So we have shown that, as an element in $\pi_1(B^4\smallsetminus \cW^i)$, the $i$th longitude $\ell_i$ is represented by 
the iterated commutators in meridians to the $D_j$ that correspond to the image of $\eta_n^i(\lambda_n(\cW))$ if 
the inclusion
$S^3\smallsetminus \partial D^i\to B^4\smallsetminus \cW^i$ induces an isomorphism
on the quotients of the Milnor groups by the $(n+2)$th terms of the lower central series.

Step (iii): To finish the proof of Theorem~\ref{thm:mu} we will use Dwyer's Theorem \cite{Dw} and a new notion of Whitney tower-grope duality to check that the inclusion
$S^3\smallsetminus \partial D^i\to B^4\smallsetminus \cW^i$ does indeed 
induce the desired isomorphism on the quotients of the Milnor groups by the $(n+2)$th terms of the lower central series.
It is easy to check that the inclusion induces an isomorphism on first homology, 
so by \cite{Dw} the kernel of the induced map on $\pi_1$
is generated by the attaching maps of the $2$-cells of surfaces generating the (integral) second homology
group $H_2(B^4\smallsetminus \cW^i)$. The order $0$ self-intersections $D_j\cap D_j$ only contribute Milnor relations, coming from the attaching maps of the $2$-cells of the Clifford tori around the self-intersections.
If the $D_j$ were pairwise disjoint, then by introducing (more) self-intersections as needed (by finger moves realizing the Milnor relations, see e.g.~\cite[XII.2]{Kirby}), it could be arranged that
$\pi_1(B^4\smallsetminus D^i)$ was in fact isomorphic to the free Milnor group.
Since the $D_j$ will generally intersect each other, we have to use the fact that $\cW^i$ is a non-repeating Whitney tower of order $n$ to show that any new relations coming from
(higher-order) intersections are trivial modulo $(n+2)$-fold commutators.
Since $H_2(B^4\smallsetminus \cW^i)$ is Alexander dual
to $H_1(\cW^i,\partial D^i)$, the proof of Theorem~\ref{thm:mu}
is completed by applying the following
general duality result to $\cW^i\subset B^4$, which shows that the other generating surfaces extend to class $n+2$ gropes. 
$\hfill\square$

\begin{prop}[Whitney tower-grope duality]\label{prop:w-tower-duality}
If $\cV$ is a split Whitney tower on 
$A:\Sigma=\amalg_j\Sigma_j^2 \to X$, where each order $0$ surface $A_j$ is a sphere $S^2\to X$ or a disk $(D^2,\partial D^2)\to (X,\partial X)$, then there exist dyadic
gropes $G_k\subset X\smallsetminus \cV$ such that the $G_k$ are geometrically dual to a generating set for the relative 
first homology group $H_1(\cV,\partial A)$. Furthermore, 
the tree $t(G_k)$ associated to each $G_k$ is obtained by attaching a rooted edge to the interior of an edge
of a tree $t_p$ associated to an unpaired intersection $p$ of $\cV$.
\end{prop}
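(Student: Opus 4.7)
The plan is to construct, for each edge $e$ of each tree $t_p$ associated to an unpaired intersection $p$ of $\cV$, a dyadic grope $G_e\subset X\smallsetminus\cV$ that is Alexander dual to a corresponding generator of $H_1(\cV,\partial A)$, via a variant of the tree-preserving Whitney tower-to-grope procedure of \cite{S1} rooted at the midpoint of $e$ rather than at a univalent vertex. Since each order zero component is a sphere or disk with trivial relative first homology, nontrivial classes of $H_1(\cV,\partial A)$ arise only from the self-intersections of order zero surfaces and from the attaching arcs of Whitney disks onto lower surfaces; using the split structure, a cellular computation shows that $H_1(\cV,\partial A)$ is generated exactly by loops indexed by the edges of the trees $t_p$ associated to unpaired intersections of $\cV$, each crossing the corresponding sheet-changing path transversely once.

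For each such edge $e$, I would build $G_e$ by choosing an interior point $q$ of $e$ lying in the interior of some surface $W_e$ of $\cV$, taking a small meridional $2$-sphere to $W_e$ at $q$, and surgering this sphere along the two subtrees $t_1,t_2$ of $t_p$ obtained by deleting the interior of $e$. The resulting genus one bottom stage has a symplectic basis of two meridional circles pointing into $W_e$ on either side of $q$; each is then capped off inductively by a further genus one surface following its respective sub-branch, with the caps at the univalent vertices of $t_p$ being meridional disks to the order zero surfaces (using the sphere/disk hypothesis to push such caps off the rest of $\cV$). By construction, the tree associated to $G_e$ is exactly $t_p$ with a new rooted edge attached at the interior of $e$, and $G_e$ is Alexander dual to the generator indexed by $e$ and linking-trivial with all other generators.

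The main obstacle is ensuring that all upper stages of each $G_e$ can actually be built disjoint from $\cV$, and this is exactly where the splitting hypothesis is essential: each Whitney disk of $\cV$ is embedded and lives in a standard $4$-ball thickening of its own tree, the interior of each Whitney disk contains at most one higher-order boundary arc or one transverse intersection, and these $4$-ball models can be arranged to be pairwise disjoint. Working inductively inside these models, each meridional tip of a grope stage can be pushed to a meridian of the next surface in its sub-branch via a small genus one surface that avoids $\cV$, and the recursion terminates at univalent vertices of $t_p$ where meridional disks to the order zero surfaces close off the construction. Book-keeping at the trivalent vertices verifies the claimed tree for $G_e$, completing the proof.
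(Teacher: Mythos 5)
Your overall strategy -- realize generators of $H_1(\cV,\partial A)$ by sheet-changing loops and construct dyadic gropes in the complement that are geometrically dual to them, inductively following the branches of $t_p$ and using the split structure and the sphere/disk hypothesis -- is the right one and is the paper's. But the construction of the bottom stage does not exist as written. A properly immersed surface in a $4$--manifold has meridian \emph{circles}, not $2$--spheres, so ``a small meridional $2$--sphere to $W_e$ at $q$'' is not defined; a small sphere near an interior point of a single sheet links nothing, and ``surgering along subtrees'' has no meaning for it. The paper instead singles out two concrete bottom stages: for a sheet-changing loop through an \emph{unpaired} intersection $p\in W_I\cap W_J$ it takes the \emph{Clifford torus} around $p$, whose symplectic pair of circles are meridians to $W_I$ and $W_J$; for a loop through a \emph{paired} intersection it takes the normal circle-bundle torus over an \emph{oval} -- the boundary of a neighborhood of a Whitney arc in one of the two paired sheets -- whose symplectic pair consists of a meridian to that sheet and a parallel of the oval. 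Both tori manifestly hit the corresponding sheet-changing generator once, which is what geometric duality requires; your proposed starting surface has neither this linking property nor the two independent meridional circles you assert.

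The second, more substantive, omission is the inductive mechanism that lets you cap those dual circles off by gropes inside $X\smallsetminus\cV$. The paper isolates this as Lemma~\ref{lem:WI1I2-meridian-bounds-grope}: any meridian to a Whitney disk $W_{(I_1,I_2)}$ bounds a grope in the complement with tree $(I_1,I_2)$, built from a punctured Clifford torus around one of the two intersection points paired by $W_{(I_1,I_2)}$, and iterated downward. Your ``capped off inductively by a further genus one surface following its respective sub-branch'' is exactly this lemma, but it is asserted rather than proved, and without it the recursion does not terminate with control on the tree $t(G_e)$ or on disjointness from $\cV$. Also, the paper's gropes are \emph{open} (their tips are meridians to the order zero surfaces, with no caps), so the caps you append at univalent vertices are unnecessary and would reintroduce intersections with $\cV$; and the sphere/disk hypothesis is used to guarantee that $H_1(\cV,\partial A)$ has no genus contribution, not to push caps off $\cV$. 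With the Clifford-torus and oval bottom stages plus Lemma~\ref{lem:WI1I2-meridian-bounds-grope}, the rest of your outline (generators from intersections, splitting for disjointness, bookkeeping of the tree at the trivalent vertices) aligns with the paper.
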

Here \emph{geometrically dual} means that the bottom stage surface of each $G_k$ bounds a $3$--manifold which intersects 
exactly one generating curve of $H_1(\cV,\partial A)$ transversely in a single point, and is disjoint from the other generators.
In particular, there are as many gropes $G_k$ as free generators of $H_1(\cV,\partial A)$. 
Note that it follows from the last sentence of the proposition that if $\cV$ is order $n$, then each $G_k$ is class $n+2$. 

\emph{Proof:}  
Since the $A_j$ are simply-connected, the group $H_1(\cV,\partial A)$ is generated by sheet-changing curves in $\cV$ which pass once through a transverse intersection
(and avoid all other transverse intersections in $\cV$). Such curves either pass through an unpaired intersection
or a paired intersection. First we consider a sheet-changing curve through an unpaired intersection $p\in W_I\cap W_J$ 
(so $t_p=\langle I,J \rangle$).
The Clifford torus $T$ around $p$ is geometrically dual to the curve, and the dual pair of circles in $T$ represent meridians to
$W_I$ and $W_J$, respectively (recall our convention that if, say, $J=j$ is order $0$, Then $W_J=A_j$ is an order $0$ surface).
The next lemma shows that the circles on $T$ bound branches of the desired grope $G_{(I,J)}$, with $t(G_{(I,J)})=(I,J)$.
\begin{figure}[ht!]
\centerline{\includegraphics[scale=.3]{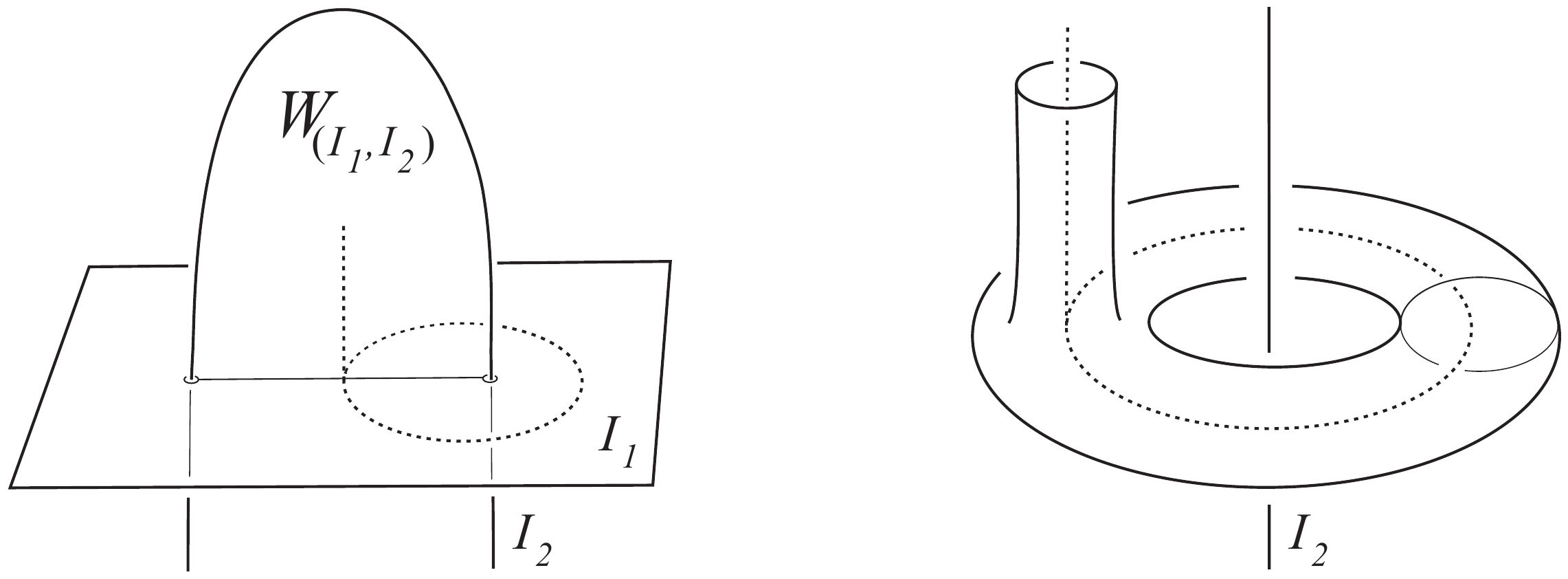}}
         \caption{The normal circle bundle $T_I$ to $W_{I_1}$ and $W_{(I_1,I_2)}$ over the dotted circle and arc on the left is shown on the right.}
         \label{fig:WI1I2-meridian-bounds-grope}
\end{figure}
\begin{lem}\label{lem:WI1I2-meridian-bounds-grope}
Any meridian to a Whitney disk $W_{(I_1,I_2)}$ in a Whitney tower $\cV\subset X$ 
bounds a grope $G_{(I_1,I_2)}\subset X\smallsetminus \cV$ such that $t(G_{(I_1,I_2)})=(I_1,I_2)$.
\end{lem}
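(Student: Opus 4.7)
The plan is to induct on the order of $(I_1, I_2)$, with the main geometric work at each step being the construction of a once-punctured torus $\Sigma_{(I_1, I_2)} \subset X \smallsetminus \cV$ satisfying $\partial \Sigma_{(I_1, I_2)} = \mu$ and having symplectic basis consisting of a meridian $c_1$ to $W_{I_1}$ together with a meridian $c_2$ to $W_{I_2}$. This once-punctured torus will serve as the bottom genus-one stage of the dyadic grope $G_{(I_1,I_2)}$.

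To construct $\Sigma_{(I_1, I_2)}$, recall that $W_{(I_1, I_2)}$ pairs canceling intersections $p^\pm \in W_{I_1} \cap W_{I_2}$. Near $p^+$ the standard Clifford torus $T$ carries a symplectic pair of curves realizing a meridian to $W_{I_1}$ and a meridian to $W_{I_2}$. Remove a small disk from $T$ and join the resulting boundary circle to $\mu$ by the annulus obtained as the normal $S^1$-bundle of $W_{(I_1, I_2)}$ restricted to an embedded arc $\delta \subset \int W_{(I_1, I_2)}$ running from the basepoint of $\mu$ to a point near $p^+$ (this is the situation depicted in Figure~\ref{fig:WI1I2-meridian-bounds-grope}). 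After first splitting $\cV$ so that each of its Whitney disks sits inside a small $4$--ball thickening of its tree, these ingredients can be selected in a small neighborhood of $W_{(I_1, I_2)} \cup \{p^+\}$, and a general-position perturbation then ensures $\Sigma_{(I_1, I_2)} \subset X \smallsetminus \cV$.

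In the base case where $(I_1, I_2) = (i, j)$ has order one, both dual circles of $\Sigma_{(i, j)}$ are meridians to the order zero surfaces $A_i, A_j$ and so serve as tips of the grope $G_{(i, j)} := \Sigma_{(i, j)}$, whose tree is visibly $(i, j)$. For the inductive step with $(I_1, I_2)$ of order $n \geq 2$, form $\Sigma_{(I_1, I_2)}$ as above; for each $k \in \{1, 2\}$, either $I_k$ is a singleton (so $c_k$ is a meridian to an order zero surface, hence a tip of the grope), or $W_{I_k}$ is a Whitney disk of order strictly less than $n$ and the inductive hypothesis supplies a grope $G_{I_k} \subset X \smallsetminus \cV$ with $t(G_{I_k}) = I_k$ bounded by $c_k$. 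Gluing $G_{I_1}$ and $G_{I_2}$ onto $\Sigma_{(I_1, I_2)}$ along $c_1$ and $c_2$ produces $G_{(I_1, I_2)}$ whose associated tree is $(I_1, I_2)$ by the rooted-product definition.

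The main obstacle is disjointness: ensuring $\Sigma_{(I_1, I_2)}$ lies in $X \smallsetminus \cV$ and that the inductively produced $G_{I_k}$ can be assembled with $\Sigma_{(I_1, I_2)}$ while remaining in $X \smallsetminus \cV$ and avoiding each other. This is handled by a nested-neighborhoods argument made available by splitness of $\cV$, together with the freedom to perturb the basepoint of $\mu$ inside $\int W_{(I_1, I_2)}$ away from the boundary arcs of any higher-order Whitney disks built on $W_{(I_1, I_2)}$. The inductive gropes $G_{I_k}$ may likewise be shrunk into neighborhoods small enough to miss $\Sigma_{(I_1, I_2)}$ away from their attaching circles, and one another.
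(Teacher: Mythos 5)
Your proof is correct and follows essentially the same approach as the paper's: realizing the meridian as the boundary of a punctured Clifford torus around one of the paired intersections $p^\pm\in W_{I_1}\cap W_{I_2}$ (tubed along $W_{(I_1,I_2)}$), observing that its symplectic pair of dual circles are meridians to $W_{I_1}$ and $W_{I_2}$, and then iterating down to the order zero surfaces. You make the recursion explicit as an induction on the order of the bracket and spell out the splitting and disjointness bookkeeping that the paper leaves implicit, but the geometric content is identical.
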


\emph{Proof:}  
As illustrated in Figure~\ref{fig:WI1I2-meridian-bounds-grope}, such a meridian bounds a punctured Clifford
torus $T_I$ around one of the intersections paired by $W_{(I_1,I_2)}$. Each of a symplectic pair of circles
on $T_I$ is a meridian to one of the Whitney disks $W_{I_i}$ paired by $W_{(I_1,I_2)}$, so iterating
this construction until reaching meridians to order $0$ surfaces yields the desired grope $G_{(I_1,I_2)}$ with bottom stage $T_I$.
$\hfill\square$

Now we consider the sheet-changing curves through intersection points that are paired by Whitney disks.
Let $W_{(I,J)}$ be a Whitney disk, and consider the boundary $\gamma$ of a neighborhood
of a boundary arc of $W_{(I,J)}$ in one of the sheets paired by $W_{(I,J)}$, as illustrated in the left-hand side of 
Figure~\ref{fig:oval-bounds-grope}. We call such a loop $\gamma$ an \emph{oval} of the Whitney disk. Clearly, an oval intersects once with a sheet-changing curve that passes
once through one of the two intersections paired by $W_{(I,J)}$. So the normal circle bundle to the sheet over an oval is geometrically dual to such a sheet-changing curve. The following lemma completes the proof of Proposition~\ref{prop:w-tower-duality}. $\hfill\square$
\begin{figure}[ht!]
\centerline{\includegraphics[scale=.3]{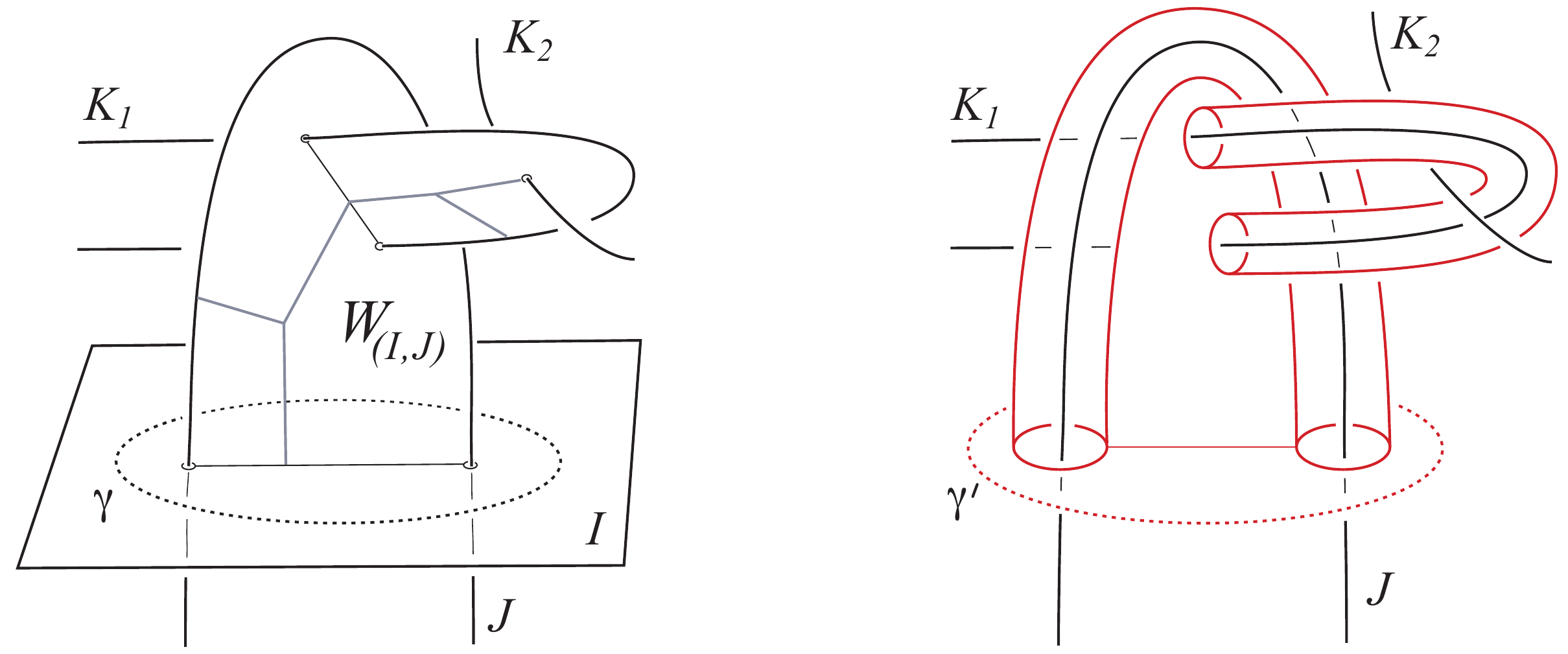}}
         \caption{Left: A (dotted) oval $\gamma\subset W_I$. Right: A (dotted) parallel $\gamma'$ to the oval bounds a grope in a nearby `time' coordinate. Not shown is the dual branch of the grope attached to the meridian of $W_J$ (as per Lemma~\ref{lem:WI1I2-meridian-bounds-grope}).}
         \label{fig:oval-bounds-grope}
\end{figure}

\begin{lem}\label{lem:ovals-bound-gropes}
Let $W_{(I,J)}$ be a Whitney disk in a split Whitney tower $\cV$ such that $W_{(I,J)}$ contains a trivalent vertex of
a tree $t_p=\langle (I,J),K \rangle$ associated to an unpaired intersection point $p\in\cV$.
If $\gamma\subset W_I$ is an oval of $W_{(I,J)}\subset\cV$; then the normal circle bundle 
$T$ to $W_I$ over $\gamma$ is the bottom stage of a dyadic grope $G\subset (X\smallsetminus \cV)$, such that 
$t(G)=( I,(J,K))$. 
\end{lem}

\emph{Proof:}  
The torus $T$ contains a symplectic pair of circles, one of which is a meridian to $W_I$, while the other is a parallel $\gamma'$ of $\gamma$.  
By Lemma~\ref{lem:WI1I2-meridian-bounds-grope}, the meridian to $W_I$ bounds a grope $G_I$ with $t(G_I)=I$, so we need to check that $\gamma'$ bounds
a grope $G_{(J,K)}$ with tree $(J,K)$.

As shown in Figure~\ref{fig:oval-bounds-grope}, $\gamma'$ bounds
a grope whose bottom stage contains a symplectic pair of circles, one of which is a meridian to $W_J$; while the other is either
parallel to an oval in $W_{(I,J)}$ around the boundary arc of a higher-order Whitney disk $W_{((I,J),K_1)}$ for $K=(K_1,K_2)$ (as shown in the figure), or is a meridian to $W_K$ if $W_{(I,J)}$ contains the unpaired intersection $p=W_{(I,J)}\cap W_K$ (since $\cV$ is split, these are the only two possible types of singularities in $W_{(I,J)}$). By Lemma~\ref{lem:WI1I2-meridian-bounds-grope},
the meridian to $W_J$ bounds a grope $G_J$; and inductively the oval-parallel circle, or again by Lemma~\ref{lem:WI1I2-meridian-bounds-grope}
the meridian to $W_K$, bounds a grope $G_K$; so the grope
bounded by $\gamma'$ does indeed have tree $(J,K)$.  
$\hfill\square$


\begin{figure}[ht!]
\centerline{\includegraphics[scale=.6]{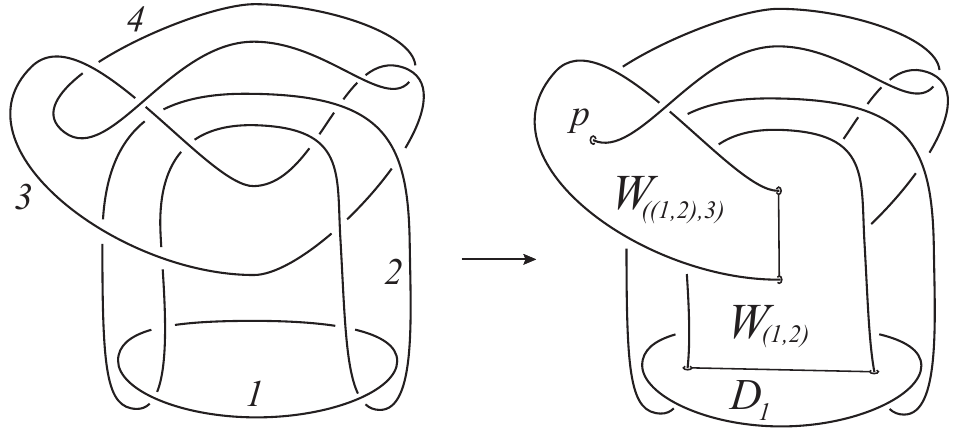}}
         \caption{Moving radially into $B^4$ from left to right, a link $L\subset S^3$ bounds an order $2$ 
         (non-repeating) Whitney tower $\cW$:
         The order $0$ disk $D_1$ consists of a collar on $L_1$ 
         together with the indicated embedded disk on the right. The other three order $0$ disks in $\cW$
         consist of collars on the other link components which extend further into $B^4$ and are capped off by disjointly embedded disks.
         The order $1$ Whitney disk $W_{(1,2)}$ pairs $D_1\cap D_2$, and the order $2$ Whitney disk $W_{((1,2),3)}$
         pairs $W_{(1,2)}\cap D_3$, with $p=W_{((1,2),3)}\cap D_4$
         the only unpaired intersection point in $\cW$. 
         See Figure~\ref{bing-hopf-example-tower-and-grope-with-tree} for the tree-preserving resolution of $\cW$ to a grope.}
         \label{bing-hopf-example-with-tower}
\end{figure}
\begin{figure}[ht!]
\centerline{\includegraphics[scale=.6]{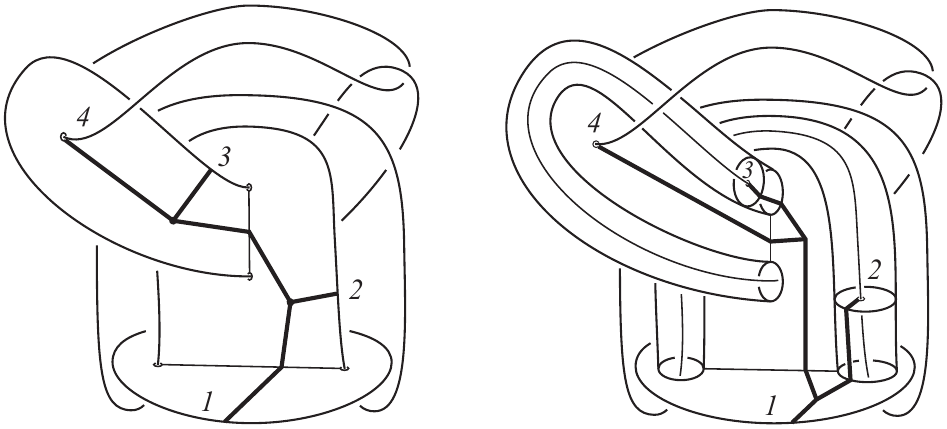}}
         \caption{Both sides of this figure correspond to the slice of 
         $B^4$ shown in the right-hand side of Figure~\ref{bing-hopf-example-with-tower}.
         The tree $t_p=\langle ((1,2),3),4 \rangle\subset \cW$ is shown on the left. 
         Replacing this left-hand side by the right-hand side illustrates the construction
         of a class $3$ (capped) grope $G_1^c$ bounded by $L_1$, shown (partly translucent) 
         on the right, gotten by surgering $D_1$ and $W_{(1,2)}$. Each of the disks $D_2$, $D_3$ and $D_4$
         has a single intersection with a cap of $G^c_1$, with $G_1$ displaying the longitude of $L_1$ as the triple
         commutator $[x_2,[x_3,x_4]]$ in $\pi_1(B^4\smallsetminus \cW^1)$, where $\cW^1=D_2\cup D_3\cup D_4$. 
         This simple case illustrates the local picture of 
         the general 
         computation of $\eta_n^i(\lambda_n(\cW))$: For a more complicated $L=\partial\cW$ this construction would be carried out in a 
         $4$--ball neighborhood of each tree containing an $i$-labeled vertex, and $\cW^i$ would consist of other 
         Whitney disks as well as the $D_{j\neq i}$. }
         \label{bing-hopf-example-tower-and-grope-with-tree}
\end{figure}



\subsection{The Whitney tower-to-grope construction}\label{subsec:w-tower-to-grope-review}
This subsection briefly sketches the Whitney tower-to-grope construction used above in 
Step (ii) of the proof of Theorem~\ref{thm:mu}.
In \cite{CST2} this procedure of converting Whitney towers to capped gropes in order to read off commutators determined by link longitudes is covered in detail in the setting of repeating Milnor invariants.
The analogous computation of repeating Milnor invariants from capped gropes described there is trickier in that meridians to a given link component $L_i$ can also contribute to the same longitude $\ell_i$. Hence the computation of $\ell_i$ uses a push-off $G_i'$
of the grope body $G_i$, and there may be intersections between the bottom stage of $G'_i$ and caps on $G_i^c$ which correspond
to repeating indices on the associated tree.

Here in the non-repeating setting, $\ell_i$ can be computed as described above directly from the body $G_i$
of the capped grope $G_i^c$, by throwing away the caps and just remembering how the tips of $G_i$ are meridians to the other components corresponding to the univalent labels on $t(G^c_i)$. See Figures~\ref{bing-hopf-example-with-tower} and~\ref{bing-hopf-example-tower-and-grope-with-tree} for the local model near a tree.

A typical 0-surgery which
converts a Whitney disk $W_{(I,J)}$ into a cap $c_{(I,J)}$ is
illustrated in Figure~\ref{fig:Wtower-to-grope1and2}, which
also shows how the signed tree is preserved. 
The sign associated
to the capped grope is the product of the 
signs coming from the
intersections of the caps with the bottom stages, which corresponds
to the sign of the un-paired intersection point in the Whitney
tower; (surgering along the other boundary arc of the Whitney
disk, and the other sign cases are checked similarly). If either of the $J$- and $K$-labeled
sheets is a Whitney disk, then the corresponding cap will be
surgered after a Whitney move which turns the single cap-Whitney
disk intersection into a cancelling pair of intersections between
the cap and a surface sheet that was paired by the Whitney disk,
as
described in Section~4.2 of \cite{S1}
(with orientations checked in Lemma~14,
Figures ~10 and 11 of \cite{ST2}).

\begin{figure}[ht!]
         \centerline{\includegraphics[scale=.4]{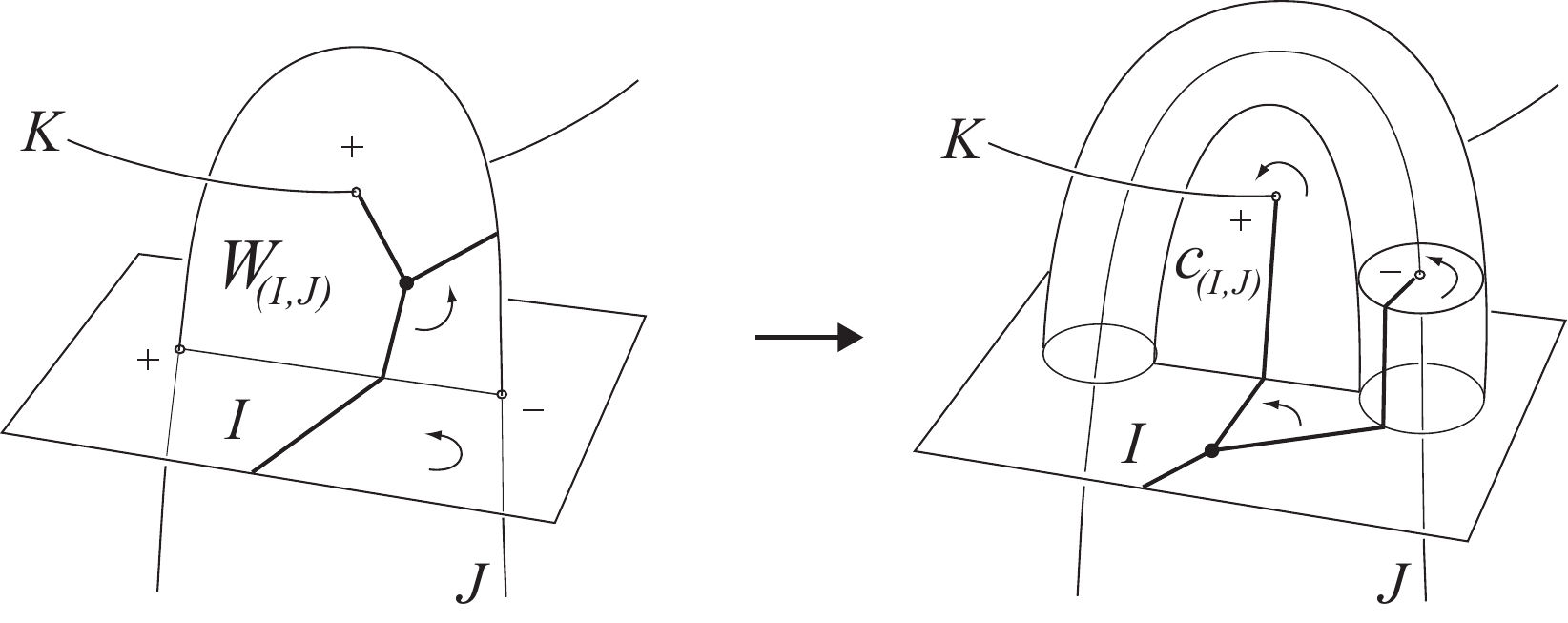}}
         \caption{Resolving a Whitney tower to a capped grope preserves
         the associated oriented trees. The boundary of the $I$-labeled sheet
         represents the commutator
         $[x_J,x_K]$, up to conjugation,
         of the meridians $x_J$ and $x_K$
         to the $J$- and $K$-labeled sheets.}
         \label{fig:Wtower-to-grope1and2}
\end{figure}

\section{Proof of Theorem~\ref{thm:in-X-L}}\label{sec:proof-thm-in-X-L}
Let $L\subset S^3$ bound an order $n$ Whitney tower in $B^4$, and let $X_L$ be the $4$--manifold gotten by attaching $0$-framed $2$--handles to $L$. We need to show:
\begin{enumerate}
\item Any map $A:\amalg^m S^2\to X_L$ of $2$--spheres into $X_L$ admits an order $n$
Whitney tower.
\item For any order $n$ non-repeating Whitney tower $\cW$ supported by $A:\amalg^m S^2\to X_L$, the non-repeating intersection invariant $\lambda_n(A):=\lambda_n(\cW)\in\Lambda_n(m)$ is independent of the choice of $\cW$. 
\end{enumerate}

The first statement of Theorem~\ref{thm:in-X-L} follows from the observations
in section~\ref{sec:properties-of-w-towers}: Any $A:\amalg^m S^2\to X_L$ is homotopic to the union of
band sums of parallel copies of cores of the $2$--handles of $X_L$ with $0$-framed immersed disks bounded by a link $L'$ formed from $L$ by the operations of adding parallel components, switching orientations, taking internal band sums and deleting components. Any order $n$ Whitney tower
on immersed disks in the $4$--ball bounded by $L$ can be modified to give an order $n$ Whitney tower on immersed disks bounded by $L'$ as described in 
subsection~\ref{sec:properties-of-w-towers}. Then the union of the Whitney tower bounded by $L'$ with the
$2$--handle cores forms an order $n$ Whitney tower supported by $A$.

To prove the second statement of Theorem~\ref{thm:in-X-L} we will use the following consequence of Theorem~\ref{thm:lambda-link-htpy}: If $\cV$ is any order $n$ non-repeating Whitney tower on a collection of $m$ immersed $2$--spheres in the $4$--sphere, then the order $n$ non-repeating intersection invariant $\lambda_n(\cV)$ must vanish in $\Lambda_n(m)$. Otherwise, the $2$--spheres supporting $\cV$ could be tubed into disjointly embedded $2$--disks in the $4$--ball bounded by an unlink $U$ in the $3$--sphere
to create an order $n$ Whitney tower $\cW_U$ in $B^4=B^4\#S^4$ with 
$\lambda_n(U)=\lambda_n(\cW_U)=\lambda_n(\cV)\neq 0\in\Lambda_n(m)$.

We start with the case where $L$ is an $m$-component link, and $A=\amalg^m_{i=1} A_i:S^2\to X_L$ is such that each $A_i$ goes geometrically once over the $2$--handle of $X_L$ attached to the $i$th component $L_i$ of $L$, and is disjoint from all other $2$--handles.
We assume that the orientations of $A$ and $L$ are compatible. In this case, the union of
an order $n$ Whitney tower $\cW_L$ bounded by $L$ 
with the cores of the $2$--handles forms an order $n$ Whitney tower $\cW$ on $A$, with
$\lambda_n(\cW)=\lambda_n(L):=\lambda_n(\cW_L)\in\Lambda_n(m)$. If $\cW'$ is any other
order $n$ non-repeating Whitney tower on $A'$, with $A'$ homotopic to $A$,
then we will show that $\lambda_n(\cW')=\lambda_n(\cW)\in\Lambda_n(m)$
by exhibiting the difference $\lambda_n(\cW)-\lambda_n(\cW')$ as $\lambda_n(\cV)$,
where $\cV$ is an order $n$ non-repeating Whitney tower on a collection of immersed $2$--spheres in the $4$--sphere.

To start the construction, let $\cW'\subset X_L=B\cup H_1\cup H_2\cup\cdots\cup H_m$ be an
order $n$ non-repeating Whitney tower on $A'$, with $A'$ homotopic to $A$. 
Here $B$ is the $4$--ball, and the $H_i$ are the $0$-framed $2$--handles.  
Any singularities of $\cW'$ which are contained in the $H_i$ can be pushed off by radial ambient isotopies, so that $\cW'$ may be assumed to only intersect the $H_i$ in disjointly embedded disks which are parallel copies of the handle cores. These embedded disks lie in the order $0$ $2$--spheres and the interiors of Whitney disks of $\cW'$. It also may be assumed that the trees representing $\lambda_n(\cW')$ are disjoint from all the $H_i$. 

The intersection $\cW'\cap \partial B$ is a link $L'$ in $S^3$, such that $L'$ is related to $L$ by adding some parallel copies of components and switching some orientations.
Note that since each $A_i$ goes over $H_i$ algebraically once, $L'$ contains $L$ as a sublink. Write $L'$ as the union $L'=L^0\cup L^1$ of two links where
the components of $L^0$ bound handle core disks in the order $0$ $2$--spheres of $\cW'$, and the
components of $L^1$ bound handle core disks in the Whitney disks (surfaces of order at least $1$) of $\cW'$. For each $i$, the components of $L^0$ which are parallel to $L_i\subset L$
must come in oppositely oriented pairs except for one component which is oriented
the same as $L_i$. The components of $L^1$ can be arbitrary parallels of components of $L$.

Now delete the $H_i$ from $X_L$, and form $S^4$ by gluing another $4$--ball $B^+$ to $B$ along their
$3$--sphere boundaries. Since $L$ bounds the order $n$ Whitney tower $\cW_L$ in $B^+$, an order $n$ Whitney tower $\cW^+\subset B^+$ bounded by $L'$ can be constructed using parallel order $0$ disks and Whitney disks of $\cW_L$ as in section~\ref{sec:properties-of-w-towers} above. The union of $\cW^+$ together with
$\mathring{\cW}':=\cW'\cap B$ is an order $n$ non-repeating Whitney tower 
$\cV:=\cW^+\cup\mathring{\cW}'$
on $m$ immersed $2$--spheres in $S^4$. Figure~\ref{fig:W-tower-on-2-spheres-in-S4-with-trees-color1} gives a schematic illustration of $\cV$.
\begin{figure}[ht!]
         \centerline{\includegraphics[scale=.5]{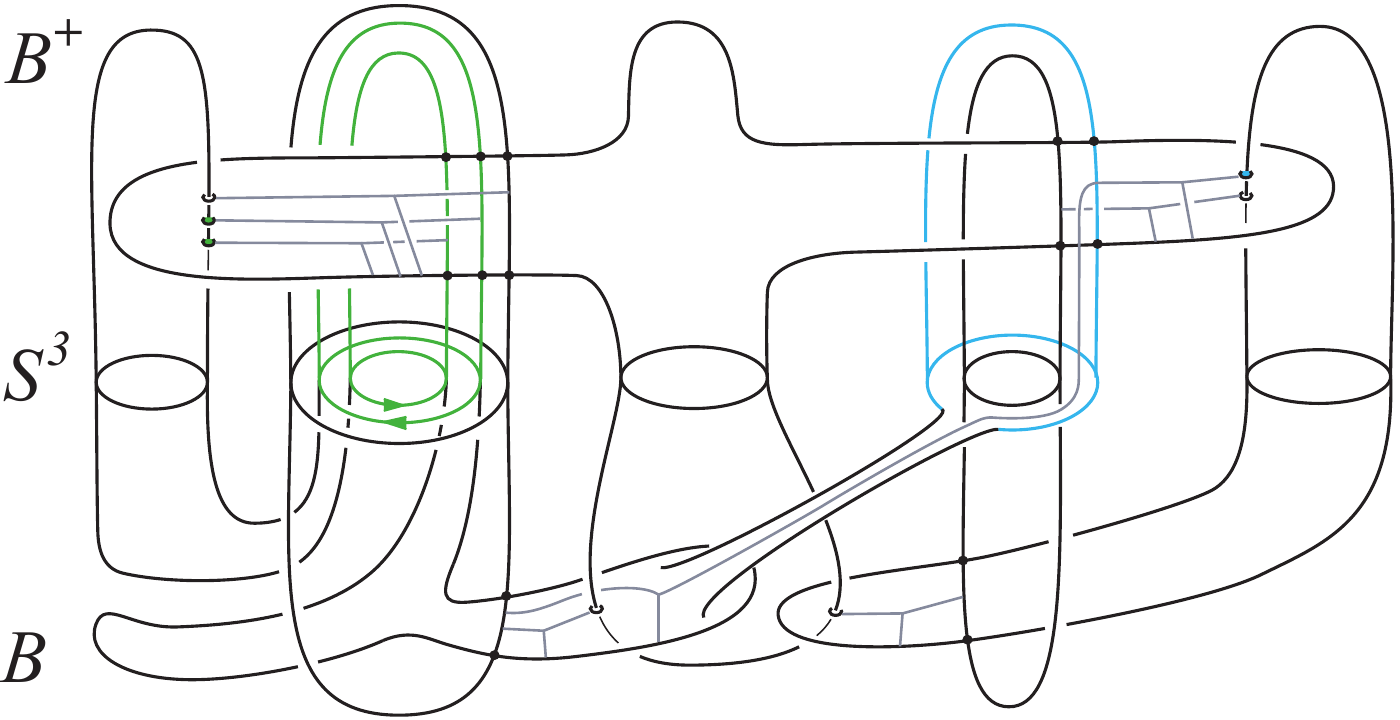}}
         \caption{The non-repeating Whitney tower $\cV=\cW^+\cup\mathring{\cW}'\subset S^4=B\cup_{S^3} B^+$: 
         The links $L\subset L'\subset S^3$ are shown in the
         horizontal middle part of the figure. The components of $L$ are black; 
         the component of $L^1\subset L'$ is blue, and an oppositely oriented
         pair of components in $L^0\subset L'$ are shown in green. The lower part of the figure shows $\mathring{\cW}'\subset B$, and 
         the upper part shows $\cW^+\subset B^+$. The tree shown involving the blue $L^1$-component 
         passes down
         through $L^1$ into a Whitney disk of $\mathring{\cW}'$ and then down into a pair of order $0$ disks in $\mathring{\cW}'$, 
         so the tree is of order greater than $n$. The pair of trees each having 
         a univalent vertex on a green order $0$ disk in $\cW^+$ have opposite signs due to the opposite orientations on the green disks.}
         \label{fig:W-tower-on-2-spheres-in-S4-with-trees-color1}
\end{figure}

We will check that $\lambda_n(\cV)=\lambda_n(\cW_L)-\lambda_n(\cW')\in\Lambda_n(m)$,
which will complete the proof in this case by the opening observation that $\lambda_n(\cV)$ vanishes.
We take the orientation of $(B^+,\partial B^+)$ in $S^4$ to be the standard orientation of $(B^4,S^3)$, and that of $(B,\partial B)$
to be the opposite. Since $\mathring{\cW}'\subset B$ contains all the trees representing $\lambda_n(\cW')$, these trees contribute the term $-\lambda_n(\cW')$
to $\lambda_n(\cV)$. 

Consider next the trees corresponding to intersections in $\cW^+$ involving components of $L^1$ (i.e.~the trees that intersect at least one order $0$ disk of $\cW^+$ bounded by a component of $L^1$). In $\cV$ these trees are subtrees of trees (for the same intersections)
which pass down through $L^1$ into the Whitney disks of $\mathring{\cW}'$ until reaching the order $0$ disks in $\mathring{\cW}'$.
Any such tree is of order strictly greater than $n$, since it contains an order $n$ proper subtree (the part of the tree in $\cW^+$) -- see Figure~\ref{fig:W-tower-on-2-spheres-in-S4-with-trees-color1}. Such higher-order trees do not contribute to $\lambda_n(\cV)$.

Consider now the remaining trees in $\cV$ which only involve the components of $L^0$. These trees represent $\lambda_n(L^0)=\lambda_n(\cW^0)$, where $\cW^0\subset \cW^+$ is the order
$n$ Whitney tower in $B^+$ bounded by $L^0\subset \partial B^+$; but we claim that in $\cV$ these trees contribute exactly $\lambda_n(L)$, which completes the proof in this case.
To see the claim, recall that $L^0$ consists of $L$ together with oppositely oriented pairs of parallel components 
of $L$. Denote by $+L_i^j$, $-L_i^j$ such a pair which is parallel to the $i$th component $L_i$ of $L$, and which bounded oppositely oriented handle-cores $+H_i$ and $-H_i$ in the $j$th component $A_j'$ of
$A'$. 
The univalent labels on trees representing $\lambda_n(L^0)=\lambda_n(\cW^0)$ which correspond to $+L_i^j$ and $-L_i^j$ when considered as trees in $\cV$ are labeled
by the same label $j$. 
Such re-labelings correspond exactly to the operations of Lemma~\ref{lem:canceling-parallels} in section~\ref{subsubsec:canceling-parallel-lemma}, which
implies that all trees involving such pairs of components contribute trivially to $\lambda_n(\cV)$,
verifying the claim.

The proof of Theorem~\ref{thm:in-X-L} in the general case follows the argument just given
with essentially only notational differences: An arbitrary $A$ is represented by the union of a linear combination of cores of the $H_i$ with immersed disks in $B^4$ bounded by a link $L_A$ formed from $L$ by the operations of adding parallel components, switching orientations, taking internal sums and deleting components. Since $L$ bounds an order $n$ Whitney tower, so does $L_A$ by section~\ref{subsubsec:canceling-parallel-lemma}. Hence $A$ supports an order $n$ Whitney tower $\cW$ with $\lambda_n(\cW)=\lambda_n(L_A)\in\Lambda_n(m)$.
One shows that $\lambda_n(\cW')=\lambda_n(L_A)$ for any non-repeating $\cW'$ on $A'$ homotopic to $A$ by proceeding as above with $L_A$ taking the place of $L$.


\section{Pulling apart parallel $2$--spheres}\label{sec:parallel-thm-proof}
In this section we prove Theorem~\ref{thm:parallel} of the
introduction, which states that for a map $A_0:S^2\to X$ of
a $2$--sphere in a simply connected $4$--manifold $X$ with vanishing normal Euler number, the homological self-intersection number $[A_0]\cdot[A_0]$ vanishes if and only if any number of parallel copies of $A_0$ can be pulled apart. 

Note that since the Euler number $e(A)$ of the normal bundle of a map $A:S^2\to X$ of a $2$-sphere in a $4$--manifold $X$ can be changed by $\pm 2$ by performing a cusp homotopy of $A$,
the condition $e(A)=0$ can be arranged if and only if the second Stiefel-Whitney class $\omega_2\in H^2(X;\Z_2)$ vanishes on $[A]$
(see e.g.~\cite[Sec.1.3A]{FQ}).
On the other hand, if $\omega_2([A])\neq 0$, then $[A]\cdot[A]$ is odd and hence not even two copies of $A$ can be pulled apart.

The proof of Theorem~\ref{thm:parallel} includes a geometric proof that boundary
links in the $3$--sphere are link-homotopically trivial
(Proposition~\ref{prop:boundary-link} below). We also give an
example (\ref{example:parallel-counter-example}) illustrating that 
the ``only if'' direction of Theorem~\ref{thm:parallel} is not generally true
in \emph{non}-simply connected $4$--manifolds.

\subsection{Proof of Theorem~\ref{thm:parallel}.}
We drop the subscript from notation and consider a map $A:S^2\to X$ with vanishing normal Euler number $e(A)=0$ and $X$ simply connected.
From the relationship $[A]\cdot[A]=e(A)+\lambda(A,A')$, and the hypothesis that $e(A)=0$, we have that $[A]\cdot[A]$ 
is equal to the Wall pairing 
$\lambda(A,A')$ which counts signed intersections between $A$ and any transverse parallel copy $A'$ (a generic normal section). (Since $X$ is simply connected, the Wall pairing is just the usual algebraic intersection number in $\Z$.) 
So the ``if'' direction of Theorem~\ref{thm:parallel} is clear, since $\lambda(A,A')$ obstructs pulling apart any two copies of $A$.

For the other direction, start by observing that the vanishing of $[A]\cdot [A]=\lambda(A,A')$ implies that $A$ supports an
order 1 Whitney tower $\cW$: The intersections between the parallels $A$ and $A'$ correspond in pairs to self-intersections of $A$,
so $\lambda(A,A')$ is equal to twice the sum of signed self-intersections of $A$.
These self-intersections must come in oppositely signed pairs, which admit Whitney disks since $X$ is simply connected.

First consider the case where $A$ also has vanishing order 1 intersection invariant (section~\ref{sec:order-one-int-trees}):
If $A$ is characteristic, then $\tau_1(A):=\tau(\cW)=0\in\cT_1(1)/\mathrm{INT}_1(A)\cong\Z_2=\Z/2\Z$; or if $A$ is not characteristic, 
then $\cT_1(1)/\mathrm{INT}_1(A)$ is trivial (see \cite[Sec.1]{ST1}).
By Theorem~2 of \cite{ST1} the vanishing of $\tau_1(A)$ implies that
$A$ admits an order 2 Whitney tower, so by Lemma~3 of \cite{S2}
for {\em any} $m\in\{3,4,5,\ldots\}$, $A$ admits a Whitney tower
$\cW$ of order $m$. (The fact that $A$ is \emph{connected} and $X$
is \emph{simply connected} is crucial here, since under these
hypotheses Lemma~3 of \cite{S2} shows that higher-order Whitney
towers can be built using a Whitney disk boundary-twisting construction.) Now,
taking parallel copies of the Whitney disks in $\cW$ yields an
order $m$ Whitney tower on $m+1$ parallel copies of $A$, as observed above in \ref{subsec:parallel-w-towers}. In
particular, we get an order $m$ non-repeating Whitney tower so, by
Theorem~\ref{thm:tower-separates}, the $m+1$ parallel copies
of $A$ can be pulled apart.


Consider now the case where $\tau_1(A)=\tau_1(\cW)$ is the
non-trivial element in $\Z_2$. We will first isolate (to a
neighborhood of a point) the obstruction to building an order 2 Whitney tower, and then combine the
previous argument away from this point with an application of
Milnor's Theorem~4 of \cite{M1} (which we will also prove geometrically in
Proposition~\ref{prop:boundary-link} below).

\begin{figure}[ht!]
         \centerline{\includegraphics[scale=.45]{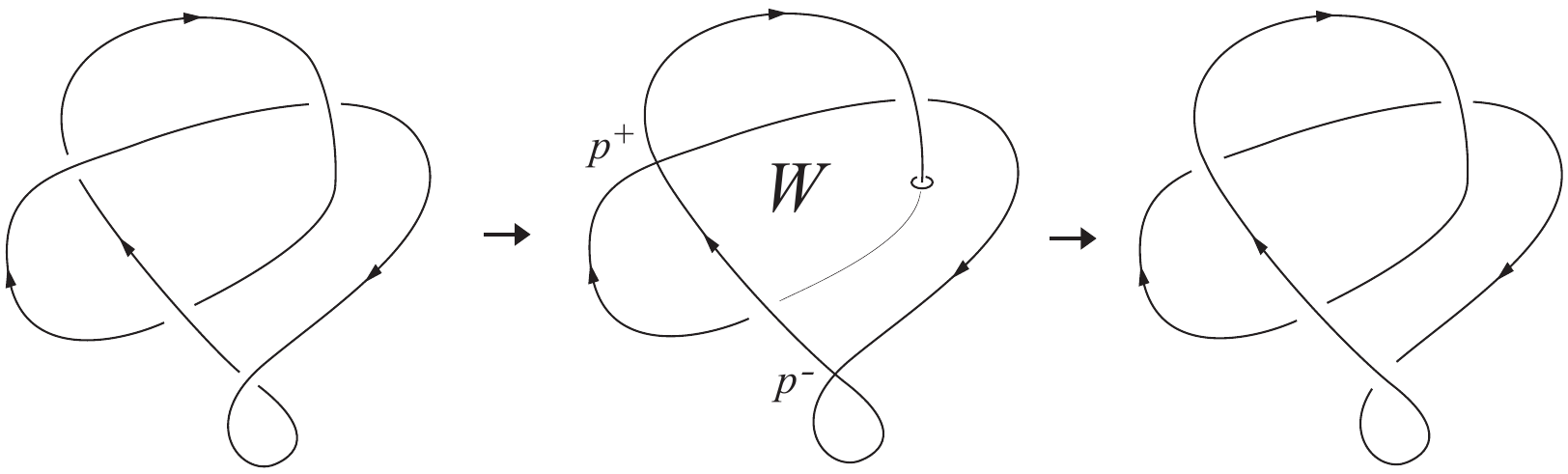}}
         \caption{Moving into $B^4$ from left to right: A trefoil knot in $S^3$ (left) bounds an immersed disk
         having a single pair of self-intersections $p^\pm$ admitting a Whitney disk $W$ containing a single 
         order $1$ intersection point (center). An unknotted `slice' of the immersed disk
         is shown on the right. The rest of the immersed disk is described by a null isotopy
         further into $B^4$ (not shown) of this unknot.}
         \label{fig:trefoil-arf}
\end{figure}
As illustrated in Figure~\ref{fig:trefoil-arf}, a trefoil knot in
the $3$--sphere bounds an immersed $2$--disk in the $4$--ball which
supports an order $1$ Whitney tower containing exactly one Whitney
disk whose interior contains a single order 1 intersection
point. It follows that the square knot, which is the connected sum
of a right- and a left-handed trefoil knot, bounds an immersed
disk $D$ in the $4$--ball which supports a Whitney tower $\cV$ containing
exactly two first order Whitney disks, each of which contains a
single order 1 intersection point with $D$. Being a well-known slice
knot, the square knot also bounds an embedded $2$--disk $D'$ in the
$4$--ball, and by gluing together two $4$--balls along their boundary
$3$--spheres we get an immersed $2$--sphere $S=D\cup D'$ in the 4--sphere
having the square knot as an ``equator'' and supporting the
obvious order 1 Whitney tower $\cW_S$ consisting of $S$ together with the two Whitney disks from $\cV$ pairing the intersections
in $D\subset S$.

Now take $\cW_S$ in a (small) $4$--ball neighborhood of a point in
$X$ (away from $A$), and tube (connected sum) $A$ into $S$. This does not change
the (regular) homotopy class of $A$, so we will still denote this
sum by $A$. Note that by construction there is a (smaller) $4$--ball
$B^4$ such that the intersection of the boundary $\partial B^4$ of
$B^4$ with $A$ is a trefoil knot (one of the trefoils 
in the connected sum decomposition of the square knot), and $B^4$ contains one of the
two Whitney disks of $\cW_S$. 
Denote by $X^\circ$ the result of removing from $X$ the interior of $B^4$,
and denote by $A^\circ$ the intersection of
$A$ with $X^\circ$ (so $A^\circ$ is just $A$ minus a small open disk). Since the order $1$ intersection
point in the Whitney disk of $\cW_S$ which is \emph{not} contained
in $B^4$ now cancels the obstruction $\tau_1(\cW)\in\Z_2$, we have
that $A^\circ$ admits an order 2 Whitney tower in $X^\circ$, and hence
again by Lemma~3 of \cite{S2}, $A^\circ$ admits a Whitney tower of any
order in $X^\circ$. As before, it follows that parallel copies of
$A^\circ$ can be pulled apart by using parallel (non-repeating) copies
of the Whitney disks in a high order Whitney tower on $A^\circ$. The
parallel copies of $A^\circ$ restrict on their boundaries to a link of
$0$-parallel trefoil knots in $\partial B^4$, and the proof of
Theorem~\ref{thm:parallel} is completed by the following lemma
which implies that these trefoil knots bound disjointly immersed
$2$--disks in $B^4$.$\hfill\square$
\begin{prop}\label{prop:boundary-link}
If the components $L_i$ of a link $L=\cup L_i\subset S^3$ are the
boundaries of disjointly embedded orientable surfaces $F_i\subset S^3$ in the
$3$--sphere, then the $L_i$ bound disjointly immersed $2$--disks in the
$4$--ball.
\end{prop}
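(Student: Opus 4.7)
The plan is to prove Proposition~\ref{prop:boundary-link} by an ambient surgery argument: convert each disjointly embedded Seifert surface in $B^4$ into an immersed disk while preserving pairwise disjointness of different surfaces.

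First, I would push each $F_i$ slightly into the interior of $B^4$, keeping $\partial F_i = L_i \subset S^3$, so the $F_i$ become pairwise disjoint properly embedded surfaces in $B^4$. I would then proceed by induction on the total genus $\sum_i g_i$ (where $g_i$ is the genus of $F_i$); the base case of all $g_i = 0$ is immediate. For the inductive step, choose some $F_{i_0}$ with positive genus and a non-separating simple closed curve $c$ on $F_{i_0}$. Given an immersed compressing disk $E \subset B^4$ bounded by $c$ with $E \cap F_k = \emptyset$ for all $k \neq i_0$, ambient surgery on $F_{i_0}$ along $E$ reduces its genus by one (introducing self-intersections where $E$ meets $F_{i_0}$, which is permitted in an immersed disk) while preserving disjointness from the other $F_k$, so the induction descends.

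The crux is producing the compressing disk $E$. Since $c \subset F_{i_0}$ is disjoint from each $F_k$ with $k \neq i_0$, the linking number of $c$ with $L_k$ vanishes, so $c$ bounds an orientable surface $G \subset S^3 \setminus \bigcup_{k \neq i_0} L_k$. I would push $G$ slightly into $B^4$; its intersections with each $F_k$ then become isolated transverse points with vanishing algebraic count. In the simply connected $B^4$, these $\pm$ intersections can be paired by Whitney disks and eliminated by Whitney moves to obtain an immersed surface disjoint from $\bigcup_{k \neq i_0} F_k$. An internal induction on the genus of $G$ then reduces it to the required immersed disk $E$.

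The main obstacle is arranging the Whitney-move procedure so that eliminating the intersections $G \cap F_k$ does not produce uncontrolled new intersections of $G$ with other $F_l$'s. The Whitney disks produced in $B^4$ may themselves meet the other $F_l$'s, and Whitney moves across them can introduce fresh intersections; however the mutual disjointness of all the Seifert surfaces encodes exactly the structural input required to organize these moves so the procedure terminates. This parallels the philosophy of the paper: the pairwise disjointness of the $F_k$'s is an order-zero non-repeating input, and the compressing-disk construction is effectively the non-repeating Whitney tower argument controlled by Theorem~\ref{thm:tower-separates}.
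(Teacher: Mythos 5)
Your inductive plan runs aground exactly at the step you flag yourself as the ``main obstacle,'' and merely asserting that ``the mutual disjointness of the Seifert surfaces encodes exactly the structural input required'' does not discharge it. Concretely: after you push $G$ into $B^4$, its intersections with the other $F_k$ must be removed by Whitney moves, and those Whitney disks are produced by general position in $B^4$ with no control on how they meet the remaining $F_l$ (or $G$ itself, or $F_{i_0}$). A Whitney move across such a disk can create new $G\cap F_l$ intersections, and there is no measure of complexity in your set-up that decreases -- the outer induction is on total genus of the $F_i$, but the inner procedure on $G$ has no termination argument. The appeal to Theorem~\ref{thm:tower-separates} also does not help directly: that theorem requires exhibiting a non-repeating Whitney tower of order $m-1$, which is an additional construction you would still have to carry out, not a consequence of pairwise disjointness of the $F_k$.

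The paper's proof resolves precisely this by making a more structured choice of compressing disks. Rather than compressing one curve at a time, it fixes a \emph{symplectic} basis of curves on each $F_i$ and caps all of them at once by immersed disks in $B^4$, so each surgery cap $D_{1r}$ comes equipped with a geometrically dual cap $D^*_{1r}$. Step 1 (finger moves pushing $D_{1s}$ down into $F_1$) arranges that the dual caps have interiors disjoint from the surgery caps; Step 2 surgers $F_1$ along the $D_{1r}$ to get an immersed disk $F_1^0$, whose intersections with caps $D_j$ on the other surfaces automatically come in cancelling pairs (two parallel copies of $D_{1r}$ through each point of $D_{1r}\cap D_j$); Step 3 uses the dual caps $D^*_{1r}$, banded appropriately, as Whitney disks for those pairs. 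The crucial bookkeeping point -- the one your argument lacks -- is that these Whitney disks are disjoint from $F_1^0$ (by Step 1), and the Whitney moves are performed on the $D_j$, not on $F_1^0$; so the only new intersections created live among the caps $D_j$ on the other surfaces, which are exactly what gets cleaned up at the next stage of the iteration. That is what makes the procedure terminate. Your proposal has the right intuition (compress, then clean up with Whitney moves) but is missing the symplectic-dual-caps device that controls where the new intersections go.
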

This proposition first appeared as Milnor's Theorem~4 of \cite{M1}, and is a special case of the general results of
\cite{T} which are proved using symmetric surgery.

\emph{Proof:}  
Choose a symplectic basis of simple closed curves on each
$F_i$ bounding properly immersed $2$--disks into the $4$--ball. We shall refer to these disks as {\em caps}.
These caps may intersect each other, but the interiors of these caps lie in the interior of $B^4$ and so are disjoint from 
$\cup_i F_i\subset S^3$. The proof proceeds
inductively by using half of these caps to surger each $F_i$ to
an immersed disk $F_i^0$, while using the other half of the caps
to construct Whitney disks which guide Whitney moves to achieve
disjointness.

We start with $F_1$. Let $D_{1r}$ and $D_{1r}^*$ denote the
caps bounded by the symplectic circles in
$F_1$, with $\partial D_{1r}$ geometrically dual to $\partial
D^*_{1r}$ in $F_1$.

\textsc{Step 1:} Using finger moves, remove any interior
intersections between the $D^*_{1r}$ and any $D_{1s}$ by pushing
the $D_{1s}$ down into $F_1$
(Figure~\ref{fig:F1-surface-pushdown}).
\begin{figure}[ht!]
         \centerline{\includegraphics[scale=.45]{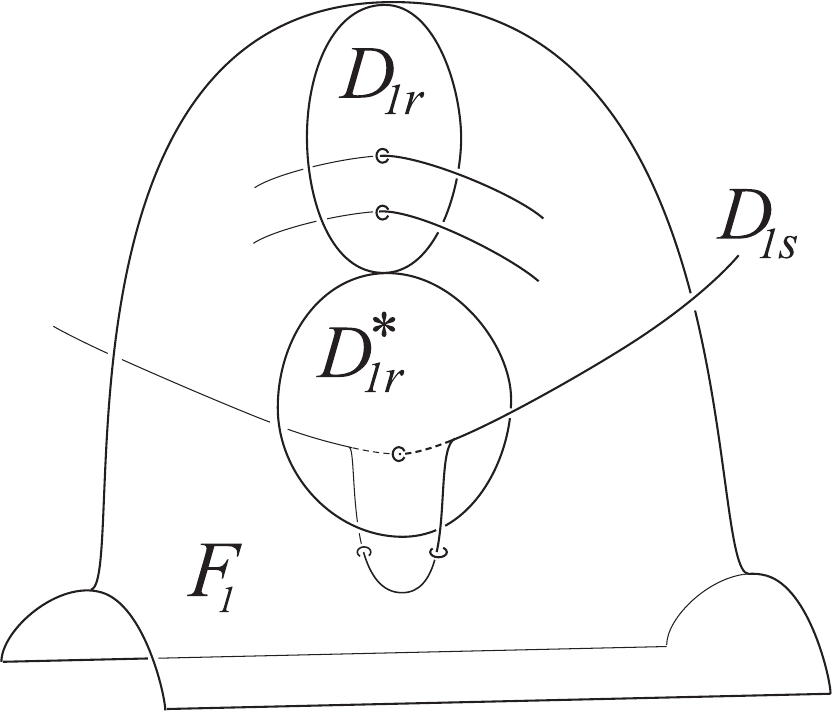}}
         \caption{}
         \label{fig:F1-surface-pushdown}
\end{figure}

\textsc{Step 2:} Surger $F_1$ along the $D_{1r}$
(Figure~\ref{fig:F1-surface-surgered}). The result is a properly
immersed $2$--disk $F_1^0$ in the $4$--ball bounded by $L_1$ in $S^3$.
The self-intersections in $F_1^0$ come from intersections and
self-intersections in the surgery disks $D_{1r}$, and any
intersections between the $D_{1r}$ and $F_1$ created in Step 1, as
well as any intersections created by taking parallel copies of the
$D_{1r}$ during surgery. We don't care about any of these
self-intersections in $F_1^0$, but we do want to eliminate all
intersections between $F_1^0$ and any of the disks $D_j$ on the
other $F_j$, $j\geq 2$. These intersections between $F_1^0$ and
the disks on the other $F_j$ all occur in cancelling pairs, with
each such pair coming from an intersection between a $D_{1r}$ and
a $D_j$. Each of these cancelling pairs admits a Whitney disk
$W_{1r}^*$ constructed by adding a thin band to (a parallel copy
of) the dual disk $D_{1r}^*$ as illustrated in
Figure~\ref{fig:F1-surface-surgered}. Note that by Step 1 the
interiors of the $D_{1r}^*$ are disjoint from $F_1^0$, hence the
interiors of the $W_{1r}^*$ are also disjoint from $F_1^0$. The
interiors of the $W_{1r}^*$ may intersect the $D_j$, but we don't
care about these intersections.
\begin{figure}[ht!]
         \centerline{\includegraphics[scale=.45]{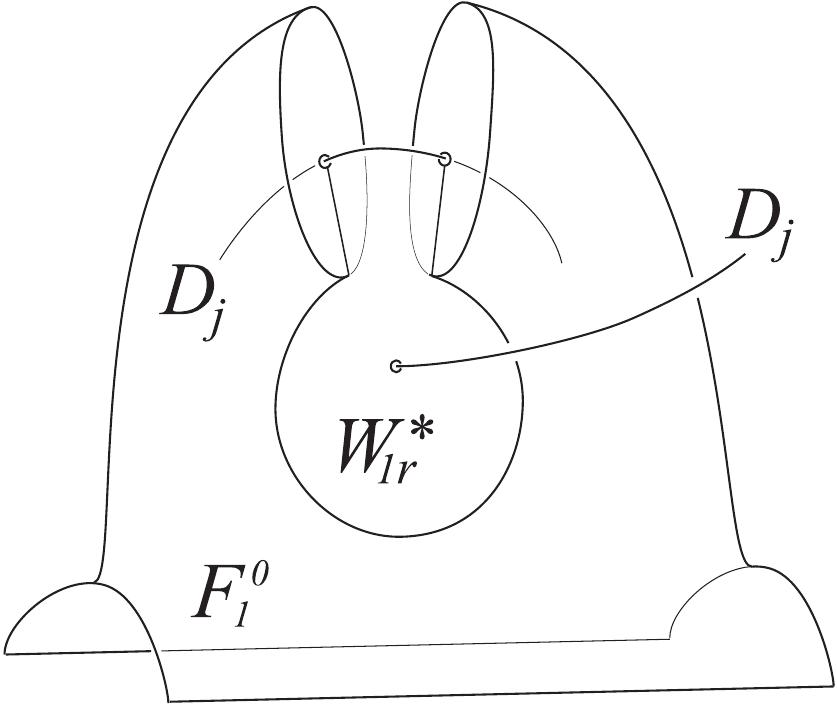}}
         \caption{}
         \label{fig:F1-surface-surgered}
\end{figure}

\textsc{Step 3:} Do the $W_{1r}^*$ Whitney moves on the $D_j$.
This eliminates all intersections between $F_1^0$ and the disks
$D_j$ on all the other $F_j$ ($j\geq 2$). Note that any interior
intersections the $W_{1r}^*$ may have had with the $D_j$ only lead
to more intersections among the $D_j$, so these three steps may be
iterated, starting next by applying Step 1 to $F_2$.
$\hfill\square$

\subsection{Example}\label{example:parallel-counter-example}
If $\pi_1X$ is non-trivial, then the conclusion of
Theorem~\ref{thm:parallel} may not hold, as we now illustrate. Let
$X$ be a $4$--manifold with $\pi_1X\cong\Z$, such that $\pi_2X$ has trivial order $0$ intersection
form; and let $A_1$ be an
immersed $0$-framed $2$--sphere admitting an order~$1$ Whitney tower
$\cW$ in $X$ with a single order~$1$ intersection point $p$ such
that $\tau_1(A_1)=t_p\in\cT_1(\Z,1)$ is represented by the single
$Y$-tree $Y(e,g,h)=t_p$ having one edge labeled by the trivial
group element $e$, and the other edges labeled by non-trivial
elements $g\neq h$, all edges oriented towards the trivalent
vertex.
Such examples are given in \cite{ST1}, and can be easily
constructed by banding together Borromean rings in the
boundary of $B^3\times S^1$ and attaching a $0$-framed two handle.

If $A_2$ and $A_3$ are parallel copies of $A_1$, then the order
1 non-repeating intersection invariant $\lambda_1(A_1,A_2,A_3)$ takes
values in $\Lambda_1(\Z,3)$ (since the vanishing of the 
order $0$ intersections means that all INT$_1$ relations are trivial).
By normalizing the group element decorating the edge adjacent to the $1$-label to the trivial
element using the HOL relations, $\Lambda_1(\Z,3)$ is
isomorphic to $\Z[\Z\times\Z]$. Using six parallel copies of the
Whitney disk in $\cW$, we can compute that
$\lambda_1(A_1,A_2,A_3)$ is represented by the sum of six
$Y$-trees $Y(e,g,h)$, where the univalent vertex labels vary over
the permutations of $\{1,2,3\}$ (see \cite[Thm.3.(iii)]{ST1}). This element corresponds to the
element
$$
(g,h)-(h,g)+(hg^{-1},g^{-1})-(g^{-1},hg^{-1})+(gh^{-1},h^{-1})-(h^{-1},gh^{-1})\in\Z[\Z\times\Z]
$$
which is non-zero if (and only if) $g$ and $h$ are distinct
non-trivial elements of $\Z$. Since $\lambda_1(A_1,A_2,A_3)\in\Lambda_1(\Z,3)$ is a well-defined homotopy 
invariant \cite[Thm.3]{ST1}, the $A_i$ can not be pulled apart whenever $g$ and $h$ are distinct and
non-trivial. 



\section{Dual spheres and stablizations}\label{sec:dual-and-stablization}
This section contains proofs of Theorem~\ref{thm:stable-separation} and Theorem~\ref{thm:duals}, both of which involve using low-order intersections to kill higher-order obstructions.
  
\subsection{Proof of Theorem~\ref{thm:duals}}\label{sec:casson-lemma-proof}
We need to show that surfaces $A_i$ with pairwise vanishing Wall intersections can be pulled apart if they have algebraic duals $B_i:S^2\to X$. 

\emph{Proof:}  
Wall's intersection pairing $\lambda(A_i,A_j)\in\Z[\pi]$ is defined when the $A_k:\Sigma_k\to X$ are maps of simply connected surfaces $\Sigma_k$, or more generally when the $A_k$ are $\pi_1$-null (Remark~\ref{rem:pi1-null}).
The pairwise vanishing of Wall's invariant gives an order 1
non-repeating Whitney tower on the $A_i$ (\ref{sec:order-zero-int-trees}). Assuming inductively for $1\leq n<
m-2$ the existence of an order $n$ non-repeating Whitney tower
$\cW$ on the $A_i$, it is enough to show that it can be arranged
that $\lambda_n(\cW)=0\in\Lambda_n(\pi,m)$, which allows us to
find an order $n+1$ non-repeating Whitney tower by 
Theorem~\ref{thm:non-rep-obstruction-theory}, and then to apply Theorem~\ref{thm:tower-separates}
when $n=m-2$.

By performing finger moves to realize the rooted product, any order
$n$ Whitney tower $\cW\subset X$ can be modified (in a neighborhood of a 1-complex) to
have an additional clean order $n$ Whitney disk $W_J$ whose
decorated tree corresponds to any given bracket $J$, with edges labeled
by any given elements of $\pi:=\pi_1X$. If $J$
is non-repeating and does not contain the label $i$, then tubing the $2$--sphere
$B_i$ into $W_J$ will change $\lambda_n(\cW)$ exactly by
adding the order $n$ generator
$\pm\langle J, i\rangle_g$, where the sign can
be chosen by the choice of orientation on $B_i$, and the element $g\in\pi$ decorating the $i$-labeled edge is determined by the choice of arc guiding the tubing (together with the whiskers on $W_J$ and $A_i$): Since $W_J$ is order $n$, 
any intersections between $B_i$
and other Whitney disks in $\cW$ will only contribute intersections of order strictly greater than $n$; and since $\lambda(A_j,B_i)=\delta_{ij} \in\Z[\pi]$, any other intersections between $A_j$ and $B_i$ contribute only canceling pairs of order $n$ intersections.  For $1\leq n$,
any generator of $\Lambda_n(\pi,m)$ can be realized as $\langle J, i\rangle_g$, 
so the just-described tubing procedure
can be used to modify $\cW$ until
$\lambda_n(\cW)=0\in\Lambda_n(\pi,m)$.
$\hfill\square$

\subsection{Proof of Theorem~\ref{thm:stable-separation}}\label{sec:stable-thm-proof}
We need to show that $\lambda_1(A)=0\in\Lambda_1(\pi,m)/\mathrm{INT}_1(A)$ if and only if $A:\amalg^m S^2\to X$ can be  pulled apart stably. 

Note that the vanishing of the homotopy invariant $\lambda_0(A)$ is implied by $\lambda_1(A)$ being defined.

\emph{Proof:}  
The ``if'' direction is immediate since $\lambda_1(A)$ only depends on the homotopy class of $A$ (by \cite{ST1}), and any $2$--spheres carried by the stabilization 
contribute trivially to $\mathrm{INT}_1(A)$. (See section~\ref{sec:order-1-INT} below for details on the $\mathrm{INT}_1(A)$
relations in the cases $m=3,4$ and $X$ simply connected.)

For the ``only if'' direction, observe first that the vanishing of $\lambda_1(A)$ gives an order 2
non-repeating tower supported by $A$ (by Theorem~\ref{thm:non-rep-obstruction-theory}).
Assuming inductively for 
$2\leq n< m-2$ the existence of an order $n$ non-repeating Whitney tower
$\cW$ on  $A$, it is enough to show that it can be arranged
that $\lambda_n(\cW)=0\in\Lambda_n(\pi,m)$, which allows us to
find an order $n+1$ non-repeating Whitney tower by 
Theorem~\ref{thm:non-rep-obstruction-theory}, and then to apply Theorem~\ref{thm:tower-separates}
when $n=m-2$.

For $n\geq 2$, any generator of $\Lambda_n(\pi,m)$ can be written as $\langle I,J \rangle_g$
where $I$ and $J$ are both of order greater than or equal to 1, and $g\in\pi$ decorates the edge where the roots of $I$ and $J$ are joined. As in the above proof of Theorem~\ref{thm:duals},
any order $n$ non-repeating Whitney tower $\cW$ on $A$ can be modified to have new clean Whitney disks $W_I$ and $W_J$, without
affecting $\lambda_n(\cW)$. Stabilizing the ambient 4--manifold by $S^2\times S^2$ and tubing $W_I$ and $W_J$ into a pair of dual 2--spheres coming from the stablization creates an intersection realizing
the generator $\langle I,J \rangle_g$, where the element $g\in\pi$ is determined by the choices of whiskers on $W_I$ and $W_J$ and the tubes into the dual spheres. By realizing generators in this way it can be arranged that $\lambda_n(\cW)=0$. 

By Poincar\'{e} duality, the same holds for any closed simply connected $4$--manifold other than $S^4$. For instance, for stablization by $\CP^2$ (or $\overline{\CP}^2$), where the dual 2--spheres
are copies of $\CP^1$, the framings on $W_I$ and $W_J$ can be recovered by boundary-twisting \cite[Sec.1.3]{FQ}, which only creates repeating intersections. 
$\hfill\square$

We remark that some control on the number of stablizations needed in Theorem~\ref{thm:stable-separation} can be obtained in terms of $m$ when $X$ is simply connected (so that $\Lambda_n(m)$ is finitely generated). 
For instance, a single stablization realizes $k$ times a generator by tubing $W_I$ or $W_J$ into $k$ copies (tubed together) of one of the dual
spheres. 


\section{Second order intersection
indeterminacies}\label{sec:order2-INT} 

It is an open problem to give necessary and sufficient algebraic conditions
for determining whether or not an arbitrary quadruple $A:\amalg^4 S^2\to X$ of 2--spheres in a $4$--manifold can be pulled apart.
The vanishing of $\lambda_0(A)$ and $\lambda_1(A)$ is of course necessary, and 
is equivalent to $A$ admitting an order 2 non-repeating Whitney
tower.
As explained in the introduction 
(\ref{sec:intro-INT-indeterminacies}), refining the sufficiency statement provided 
by Corollary~\ref{cor:tower=disjoint} requires the introduction
of \emph{intersection relations} INT$_2(A)$ in the target of $\lambda_2(A)$ which correspond to order $0$ 
and order $1$ intersections involving $2$-spheres which can be tubed into the Whitney disks 
of any Whitney tower $\cW$ supported by $A$.

With an eye towards stimulating future work, the goals of this section are to present some relevant details, describe some partial results, and introduce a
related number theoretic problem, while formulating order $2$ intersection relations  
which make the following conjecture precise:
\begin{conj}\label{conj:INTm=4}
If a quadruple of immersed $2$--spheres $A:\amalg^4 S^2\to X$ in a simply connected $4$--manifold $X$ admits an order 2 non-repeating Whitney
tower $\cW$, then  $A$ can be pulled apart if and
only if $\lambda_2(A):=\lambda_2(\cW)$ vanishes in
$\Lambda_2(4)/\mathrm{INT}_2(A)$.
\end{conj}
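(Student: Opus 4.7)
The plan is to mirror the strategy used for $n=1$ in \cite{ST1}, promoted to order $2$ and adapted to the non-repeating setting, by (a) defining $\mathrm{INT}_2(A)\subset \Lambda_2(\pi,4)$ as the subgroup (or coset/image, see below) generated by the geometric effects of tubing Whitney disks in a non-repeating Whitney tower into immersed $2$--spheres representing classes in $\pi_2 X$, then (b) checking well-definedness of $\lambda_2$ in the quotient for a fixed immersion, and (c) using Theorem~\ref{thm:non-rep-obstruction-theory} together with Theorem~\ref{thm:tower-separates} (applied with $m=4$, so non-repeating order $3$ suffices) for the sufficiency direction. The ``only if'' direction is immediate from Theorem~\ref{thm:tower-separates}: pulling $A$ apart yields a non-repeating Whitney tower of any order, hence $\lambda_2(\cW)=0\in\Lambda_2(\pi,4)$ for that $\cW$, and so $\lambda_2(A)$ vanishes in any quotient. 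Homotopy invariance, given well-definedness on a fixed immersion, follows automatically from Proposition~\ref{prop:fixed-immersion}.

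The first concrete task is to catalog all moves relating two order $2$ non-repeating Whitney towers $\cW$, $\cW'$ supported by the same immersion $A$: boundary-arc homotopies of Whitney disks across intersections, choices of disk pairings (since $\pi_2$ of complements of surfaces is nontrivial), finger/Whitney cancellation pairs, and choices of which lower-order intersections to pair first. Each such move changes $\lambda_2$ by a correction that factors through tubing a Whitney disk into an element of $\pi_2 X$ along with subsequent intersections. For a clean order $2$ Whitney disk $W_{(I,J)}$ of the form $(I,J)=((i,j),k)$, tubing into a $2$--sphere $B\in\pi_2 X$ contributes the linear term $\langle ((i,j),k),\ell\rangle_g$ for each intersection of $B$ with $A_\ell$ ($\ell\notin\{i,j,k\}$), and this generates the linear order $0$ piece of $\mathrm{INT}_2(A)$. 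For an order $1$ Whitney disk $W_{(i,j)}$ tubed into $B$, the effect on $\lambda_2$ involves coupling intersections of $B$ with order $0$ sheets $A_\ell$ \emph{with} intersections of $B$ (or its parallels) with order $1$ Whitney disks in $\cW$, and this is where the promised quadratic coupling of section~\ref{subsubsec:quadratic-INT2-maps} appears.

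The sufficiency direction then proceeds by geometric realization: given $\cW$ with $\lambda_2(\cW)\in\mathrm{INT}_2(A)$, write $\lambda_2(\cW)$ as an explicit combination of the generators described above, and perform the corresponding tubing operations on $\cW$ (possibly introducing clean auxiliary Whitney disks by finger-move realization as in the proof of Theorem~\ref{thm:duals}) to produce a new order $2$ non-repeating Whitney tower $\cW''$ with $\lambda_2(\cW'')=0\in\Lambda_2(\pi,4)$. Theorem~\ref{thm:non-rep-obstruction-theory} applied with $n=2$ upgrades $\cW''$ (after homotopy of $A$) to an order $3$ non-repeating tower, and then Theorem~\ref{thm:tower-separates} with $m=4$ pulls $A$ apart.

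The hard part, which the paper explicitly flags as open, is \emph{not} the geometric realizations but rather the correct algebraic formulation of $\mathrm{INT}_2(A)$: because the order $1$ tubing contributions are quadratic in the intersection data of $B\in\pi_2 X$ (coupled through the intersection form on $\pi_2 X$), $\mathrm{INT}_2(A)$ is not simply a subgroup of $\Lambda_2(\pi,4)$ cut out by a linear map but rather the image of a nonlinear (quadratic) map, whose image description reduces to the Diophantine questions of Konyagin--Nathanson \cite{Kon}. Concretely, the obstacle is to prove that the image of this quadratic map is large enough to absorb every discrepancy $\lambda_2(\cW)-\lambda_2(\cW')$ arising from a change of non-repeating Whitney tower on a fixed $A$, while still being small enough that every element in the image is geometrically realizable by the tubing procedure. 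Resolving this balance --- essentially, matching the ``algebraic'' indeterminacy of $\lambda_2$ with its ``geometric'' indeterminacy --- is the content of the conjecture and appears to require genuinely new number-theoretic input beyond the techniques used at orders $0$ and $1$.
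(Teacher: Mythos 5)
You have correctly recognized that this statement is a \emph{conjecture}, not a theorem: the paper does not prove it, and Section~\ref{sec:order2-INT} is devoted to formulating the relations $\mathrm{INT}_2(A)$ precisely in special cases and describing the obstacles to a proof. Your outline of the two directions is the right one and matches the paper's framing: the ``only if'' direction is immediate from Theorem~\ref{thm:tower-separates}; the ``if'' direction would go through geometrically realizing $\lambda_2(\cW)$ as a sum of tubing contributions to produce $\cW''$ with $\lambda_2(\cW'')=0\in\Lambda_2(4)$, then Theorem~\ref{thm:non-rep-obstruction-theory} at $n=2$ and Theorem~\ref{thm:tower-separates} at $m=4$. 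Your identification of the genuine gap --- well-definedness of $\lambda_2$ modulo $\mathrm{INT}_2(A)$ over all choices of non-repeating Whitney tower supported by a fixed $A$, plus the geometric realizability of every element of the indeterminacy --- is exactly what the paper flags as open, and Proposition~\ref{prop:fixed-immersion} reduces homotopy invariance to this fixed-immersion statement just as you say.

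One imprecision worth flagging: you locate the quadratic phenomenon in the coupling between a single tubing sphere $B$ and ``order $1$ Whitney disks in $\cW$,'' but in the paper's account the quadratic term $x_{ij}Qx_{kl}^{T}$ arises as a \emph{cross-term} between \emph{two} spheres $S_{(i,j)}$ and $S_{(k,l)}$ coming from simultaneously changing the interiors of two distinct order~$1$ Whitney disks $W_{(i,j)}$ and $W_{(k,l)}$: the difference $\Delta^{\cW}(S_{(i,j)},S_{(k,l)})$ picks up $\lambda_0(S_{(i,j)},S_{(k,l)})$ (see Figure~\ref{INT2-ijkl} and Definition~\ref{def:INT(A)}), and this is what couples the six-variable quadratic forms through the intersection matrix $Q$ on $\pi_2X$. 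A further subtlety you touch on but should make explicit: in the quadratic case the paper defines $\mathrm{INT}_2^{\cW}$ only as a \emph{subset} of $\Lambda_2(4)$, not a priori a subgroup --- the Konyagin--Nathanson result \cite{Kon} cited in \ref{subsubsec:quadratic-INT2-maps} only shows the projections to each $\Z$--summand are subgroups, and whether the full image is a subgroup remains open. So $\Lambda_2(4)/\mathrm{INT}_2^{\cW}$ is in general only a quotient \emph{set}, making well-definedness of $\lambda_2(A)$ even more delicate than at order~$1$; any honest proof of the conjecture must contend with this.
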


This section is somewhat technical, so we begin by providing an outline:
After quickly recalling in \ref{subsec:order-zero-no-indeterminacies} the lack of indeterminacies in the 
order $0$ non-repeating invariant $\lambda_0$,
the intersection relations INT$_1$ in the target of the order $1$ non-repeating invariant $\lambda_1$ are examined in detail for triples and then quadruples of $2$--spheres, including notation and examples intended to clarify and motivate the introduction of the intersection relations INT$_2(A)$ in the target of the order $2$ non-repeating invariant $\lambda_2$.
These INT$_2(A)$ relations, which are determined by $\lambda_0$ and $\lambda_1$ on $\pi_2X$, are discussed throughout section~\ref{subsec:order-2-INT}.
Section~\ref{subsec:tubing-order-2-indeterminacies} observes that if $A$ has any non-trivial order $0$ intersections with any other $2$--spheres in $X$, then the target $\Lambda_2(4)/\mathrm{INT}_2(A)$ of $\lambda_2(A)$ must be finite; and presents two related results, Proposition~\ref{dual-in-order2-tower} and Proposition~\ref{stabilization-in-order2-tower}, which give sufficient conditions for pulling apart $A$ in the setting where $\lambda_2(A)$ is defined.
Section~\ref{sec:linear-INT2} describes the INT$_2$ relations as the image in $\Lambda_2\cong\Z\oplus\Z$ of a linear map determined by $\lambda_1$ on $\pi_2X$ in the setting where $\lambda_0$ vanishes on $\pi_2X$, as motivation for the discussion in section~\ref{sec:quadratic-INT2} on how non-trivial values of $\lambda_0$ away from $A$ can affect the INT$_2$ relations.
Section~\ref{subsubsec:quadratic-INT2-maps} shows how the INT$_2$ relations can be computed as the image of a map 
whose non-linear part is determined by Diophantine quadratic equations coupled by the order $0$ intersection form 
$\lambda_0$ on $\pi_2X$, leading naturally to some relevant number theoretic questions.

Throughout the rest of this section we assume that the ambient $4$--manifold $X$
is simply connected. For brevity we suppress the domains of the components of $A$ from notation 
and consider collections $A=A_1,\ldots,A_m\imra X$ of immersed $2$--spheres.

\subsection{Order $0$ intersection invariants}\label{subsec:order-zero-no-indeterminacies}
Recall (\ref{sec:order-zero-int-trees}) that the order $0$
non-repeating intersection invariant
$\lambda_0(A_1,\ldots,A_m)=\sum\sign(p) \cdot i
\!-\!\!\!\!-\!\!\!\!-\! j\in\Lambda_0(m)$ on $2$--spheres immersed in a simply connected $4$--manifold $X$ carries exactly the same information as the
integral homological intersection form on $H_2(X)$, with the sum of
the coefficients of the $i \!-\!\!\!\!-\!\!\!\!-\! j$
corresponding to the usual homological intersection number $[A_i]\cdot [A_j]\in\Z$. There are no intersection
indeterminacies in this order $0$ setting, and
$A_1,\ldots,A_m$ admits an order $1$ non-repeating Whitney tower if
and only if $\lambda_0(A_1,\ldots,A_m)$ vanishes in
$\Lambda_0(m)$ (which is isomorphic to a direct sum of $\binom{m}{2}$ copies of
$\Z$, one for each (unordered) pair of distinct indices $i,j$).

\subsection{Order $1$ intersection relations.}\label{sec:order-1-INT}
The order $1$ intersection relations INT$_1$ are described by order $0$ intersections $\lambda_0$. These INT$_1$ relations are examined here in detail for triples
and quadruples of $2$--spheres, as notational and motivational preparation for describing
the order $2$ intersection relations. 

\subsubsection{Order $1$ triple intersections}
For a triple of immersed $2$--spheres $A_1, A_2, A_3\looparrowright X$ with $\lambda_0(A_1, A_2, A_3)=0$, the order~1
non-repeating intersection invariant $\lambda_1(A_1, A_2, A_3)$ is a
sum of order 1 $Y$-trees in $\Lambda_1(3)\cong\Z$ modulo the
INT$_1(A_1,A_2,A_3)$ intersection relations:
$$
_i^j>\!\!\!-\,\!\lambda_0(S_{(i,j)},A_k)=0
$$
where $S_{(i,j)}$ ranges over $\pi_2X$, and $(i,j)$ ranges over
the three choices of pairs from $\{1,2,3\}$. 
(Here the notation $_i^j>\!\!\!\!-\,\!\lambda_0(S_{(i,j)},A_k)$ indicates the sum of trees gotten by
attaching the root of $(i,j)$ to the $(i,j)$-labeled univalent vertices in $\lambda_0(S_{(i,j)},A_k)$ corresponding to $S_{(i,j)}$.)
Geometrically, these
relations correspond to tubing any Whitney disk $W_{(i,j)}$ into
any $2$--sphere $S_{(i,j)}$. Via the identification $\Lambda_1(3)\cong\Z$, the quotient
$\Lambda_1(3)/\mathrm{INT}_1(A_1,A_2,A_3)$ is isomorphic to $\Z_d=\Z/d\Z$, where $d$
is the greatest common divisor of all the
$\lambda_0(S_{(i,j)},A_k)$. This invariant $\lambda_1(A_1, A_2,
A_3)\in\Lambda_1(3)/\mathrm{INT}_1(A_1,A_2,A_3)$ is the \emph{Matsumoto
triple} \cite{Ma} which vanishes if and only if $A_1, A_2, A_3$ admit an order 2
non-repeating Whitney tower (and hence can be pulled apart \cite{Y}).

\textbf{Examples:} In the $4$--manifold $X_L$ gotten by attaching $0$-framed $2$-handles
to the Borromean rings $L=L_1\cup L_2\cup L_3\subset S^3=\partial B^4$, all INT$_1$ relations are trivial, and the
triple $A_1$, $A_2$, $A_3$ of $2$--spheres determined (up to homotopy) by the link components
can not be pulled apart since 
$\lambda_1(A_1, A_2, A_3)$ is equal to ($\pm$) the generator $_1^2>\!\!\!-\,\text{\small{3}}$
of $\Lambda_1(3)\cong\Z$.

If $X_L$ is changed to $X_L'$ by attaching another 2-handle along a meridional circle to
$L_3$, then INT$_1(A_1,A_2,A_3)=\Lambda_1(3)$ since 
$\,_1^2>\!\!\!-\,\!\lambda_0(S_{(1,2)},A_3)=(\pm)_1^2>\!\!\!\!-\,\text{\small{3}}$, where $S_{(1,2)}$ is the new 2--sphere
which is dual to $A_3$. Now $A_1, A_2, A_3\looparrowright X_L'$
can be pulled apart since $\lambda_1(A_1, A_2, A_3)$ takes values in the trivial group.

\subsubsection{Computing the $\mathrm{INT}_1(A_1,A_2,A_3)$
intersection relations} 
Since each element of $\pi_2X$ can affect the non-repeating order $1$ indeterminacies in three independent ways
(by tubing 2--spheres into Whitney disks $W_{(1,2)}$, $W_{(1,3)}$, and $W_{(2,3)}$) the 
$\mathrm{INT}_1(A_1,A_2,A_3)$ relations can be computed as the image
of a linear map  $\Z^r\oplus\Z^r\oplus\Z^r\longrightarrow\Z$, with $r$ the rank of
the $\Z$-module
$\pi_2X$ modulo torsion. Specifically, let $S^{\alpha}$ be a basis for
$\pi_2X$ (mod torsion), and define integers
$a^{\alpha}_{ij}:=\lambda_0(S^{\alpha}_{(i,j)},A_k)$ for
$S^{\alpha}_{(i,j)}$ ranging over the basis, and $i$, $j$, $k$ distinct.
Then,
identifying $\Lambda_1(3)\cong\Z$, the INT$_1(A_1,A_2,A_3)$
intersection relations can be described as
$$
\sum_{\alpha}(x_{12}^{\alpha}a_{12}^{\alpha}+x_{31}^{\alpha}a_{31}^{\alpha}
+x_{23}^{\alpha}a_{23}^{\alpha})=0
$$
with the coefficients $x_{ij}^{\alpha}$ ranging (independently) over $\Z$.

Using integer vector
notation, this map can be written concisely as:
$$
(x_{12},x_{31},x_{23})\longmapsto x_{12}\cdot a_{12}+x_{31}\cdot
a_{31}+x_{23}\cdot a_{23}.
$$
with ``$\,\cdot\,$'' denoting the dot product in $\Z^r$.


\subsubsection{Order $1$ quadruple intersections}\label{sec:order1-of-quadruple}

For a collection $A$ of \emph{four} immersed $2$--spheres $A=A_1, A_2,
A_3, A_4\looparrowright X$ with vanishing $\lambda_0(A)$, the order~1
non-repeating intersection invariant $\lambda_1(A)$ takes values in
$\Lambda_1(4)/\mathrm{INT}_1(A)$, with the INT$_1(A)$ relations given by
$$
\,_i^j>\!\!\!-\,\lambda_0(S_{(i,j)},A_k,A_l)=0
$$
where $S_{(i,j)}$ ranges over $\pi_2X$, and $(i,j)$ ranges over
the six choices of distinct pairs from $\{1,2,3,4\}$. Each such relation corresponds to tubing the $2$--sphere 
$S_{(i,j)}$ into a Whitney disk $W_{(i,j)}$.
Here
$\Lambda_1(4)\cong\Z\oplus\Z\oplus\Z\oplus\Z$, and each generator
of the rank $r$ $\Z$-module $\pi_2X$ modulo torsion gives six relations, so the target group
$\Lambda_1(4)/\mathrm{INT}_1(A)$ of $\lambda_1(A)$ is the
quotient of $Z^4$ by the image of a linear map from $\Z^{6r}$. The invariant
$\lambda_1(A)$ vanishes in $\Lambda_1(4)/\mathrm{INT}_1(A)$ if and only if
$A$ admits an order 2 non-repeating Whitney tower. 

\textbf{Example:} Note that each of the four copies of $\Z$ in $\Lambda_1(4)$
corresponds to a target of a Matsumoto triple (a choice of three distinct indices), but the vanishing
of the all the triples is not sufficient to get an order 2
non-repeating Whitney tower on the $A$ because of ``cross-terms''
in the INT$_1$ relations; the simplest example is the following:

Consider a five component link $L=L_1\cup \cdots\cup L_5\subset S^3=\partial B^4$ such that
$L_1\cup L_2\cup L_3$ forms a Borromean rings which is split from the component $L_4$, 
and $L_5$ is a band sum of (positive) meridians to $L_3$ and $L_4$. 
In the $4$--manifold gotten by attaching 0-framed 2-handles to $B^4$ along $L$,
let $A_i$ denote the immersed 2--sphere determined (up to homotopy) by the core of the 2-handle attached to
$L_i$.

Now any three of the quadruple $A_1,A_2,A_3,A_4$ will have vanishing first
order triple $\lambda_1(A_i,A_j,A_k)$ in $\Lambda_1(3)/\mathrm{INT}_1(A_i,A_j,A_k)$
for any choice of distinct $i$, $j$, $k$: 
Since $A_5$ is dual to $A_3$, the generator
$_1^2>\!\!\!\!-\,3$ of $\Lambda_1(3)$ is killed by INT$_1(A_1,A_2,A_3)$; and for the other
choices of $1\leq i< j< k\leq 4$ it is clear that $\lambda_1(A_i,A_j,A_k)$ vanishes since $L_i\cup L_j\cup L_k$
is a split link (so $A_i,A_j,A_k$ can be pulled apart). 

But
the first order quadruple 
$\lambda_1(A_1,A_2,A_3,A_4)=\,_1^2>\!\!\!\!\!-\,\text{\small{3}}=-\,_1^2>\!\!\!\!\!-\,\text{\small{4}}$ is non-zero
in $\Lambda_1(4)/\mathrm{INT}_1(A_1,A_2,A_3,A_4)\cong\Z^3$, where the only non-trivial 
INT$_1(A_1,A_2,A_3,A_4)$ relation is 
$$
_1^2>\!\!\!-\,\lambda_0(A_5,A_3,A_4)=\,_1^2>\!\!\!-\,\text{\small{3}}+\,_1^2>\!\!\!-\,\text{\small{4}}=0.
$$ 

Geometrically,
any order $1$ intersection in a Whitney tower on $A_1\cup A_2\cup A_3$ can be killed by tubing $A_5$ into a Whitney disk pairing intersection between $A_1$ and $A_2$ to create a canceling order $1$ intersection, but this
also creates an order $1$ intersection between the Whitney disk and $A_4$.

\subsubsection{Computing the $\mathrm{INT}_1(A_1,A_2,A_3,A_4)$ relations} 

Choose a basis $S^{\alpha}$ for $\pi_2X$
(mod torsion), and define integers
$a^{\alpha}_{ij,k}:=\lambda_0(S^{\alpha}_{(i,j)},A_k)$. Then each
element of the subgroup INT$_1(A_1,A_2,A_3,A_4)<\Lambda_1(4)$ can be written
$$
\begin{array}{rcl}
  \,_i^j>\!\!\!-\,\lambda_0(\sum_{\alpha}x^{\alpha}_{ij}S^{\alpha}_{(i,j)},A_k,A_l) & =  &\kern-2pt(\sum_{\alpha}x^{\alpha}_{ij}a^{\alpha}_{ij,k})\,_i^j>\!\!\!-\,{\scriptstyle k}+
(\sum_{\alpha}x^{\alpha}_{ij}a^{\alpha}_{ij,l})\,_i^j>\!\!\!-\,{\scriptstyle l} \\
  & =  &(x_{ij}\cdot a_{ij,k})\,_i^j>\!\!\!-\,{\scriptstyle k}+(x_{ij}\cdot
a_{ij,l})\,_i^j>\!\!\!-\,{\scriptstyle l}\\
\end{array}
$$
where the coefficients in the last expression are dot products
of vectors in $\Z^r$, with $i,j,k,l$ distinct, and $r$ the rank of $\pi_2X$ (mod torsion). Using the basis
$$
\{\,\,_2^1>\!\!\!-\,{\scriptstyle 3}\,\,, \,\,\,_2^1>\!\!\!-\,\!{\scriptstyle 4}\,\,,\,\,
\,_3^1>\!\!\!-\,\!{\scriptstyle 4}\,\,, \,\,\,_3^2>\!\!\!-\,\!{\scriptstyle 4}\,\, \}
$$
for $\Lambda_1(4)$,
the subgroup INT$_1(A_1,A_2,A_3,A_4)$ is the image of the linear map
$\Z^{6r}\longrightarrow\Z^4$:
$$
\begin{pmatrix}
  x_{12} \\
  x_{13} \\
  x_{41} \\
  x_{23} \\
  x_{24} \\
  x_{34}
\end{pmatrix}\longmapsto
\begin{pmatrix}
  a_{12,3} & -a_{13,2} & 0 & a_{23,1} & 0 & 0 \\
  a_{12,4} & 0 & a_{41,2} & 0 & a_{24,1} & 0 \\
  0 & a_{13,4} & a_{41,3} & 0 & 0 & a_{34,1} \\
  0 & 0 & 0 & a_{23,4} & -a_{24,3} & a_{34,2}
\end{pmatrix}
\begin{pmatrix}
  x_{12} \\
  x_{13} \\
  x_{41} \\
  x_{23} \\
  x_{24} \\
  x_{34}
\end{pmatrix}
$$
where the multiplication of entries is the vector dot product in $\Z^r$.

\subsection{Order 2 intersection relations.}\label{subsec:order-2-INT}

Now consider a quadruple
of immersed 2--spheres $A=A_1, A_2,
A_3, A_4\looparrowright X$ in a simply connected 4--manifold $X$, such that $\lambda_1(A)=0\in\Lambda_1(4)/\mathrm{INT}_1(A)$,
so that $A$ supports an order $2$ non-repeating Whitney tower $\cW\subset
X$.

Recall that we want to describe order $2$ intersection relations INT$_2(A)$ which account for changes in the choice of Whitney tower on $A$
and define the target of $\lambda_2(A)\in\Lambda_2(4)/\mathrm{INT}_2(A)$. Note that $\Lambda_2(4)$ is isomorphic to $\Z\oplus\Z$, generated,
for instance, by the elements
$$
t_1:=\,\,^2_1>\!\!\!-\!\!\!\!-\!\!\!\!\!-\!\!\!<^{\,3}_{\,4} \quad \mathrm{and} \quad
t_2:=\,\,^3_1>\!\!\!-\!\!\!\!-\!\!\!\!\!-\!\!\!<^{\,2}_{\,4},
$$
with the IHX relation giving:
$$
^4_1>\!\!\!-\!\!\!\!-\!\!\!\!\!-\!\!\!<^{\,2}_{\,3}\,\,=t_1+t_2.
$$

We will mostly be concerned with the case that $A$ is in the radical of $\lambda_0$ on $\pi_2X$, so that for each $i\in\{1,2,3,4\}$
the order $0$ pairing 
$\lambda_0(S,A_i)$ vanishes for
any immersed $2$--sphere $S$, but first we make some quick general observations related to Theorems~\ref{thm:stable-separation} and \ref{thm:duals} above.

\subsubsection{Tubing order $2$ Whitney disks into spheres}\label{subsec:tubing-order-2-indeterminacies}
Let $i,j,k,l$ be distinct
indices from $\{1,2,3,4\}$. 
As already observed in the proof of Theorem~\ref{thm:duals}, $\cW$ can be modified to
have an additional clean order $2$ Whitney disk $W_{((i,j),k)}$ without creating any new unpaired intersections. If $S_{((i,j),k)}$ is any immersed $2$--sphere,
then tubing 
$W_{((i,j),k)}$ into $S_{((i,j),k)}$ preserves the order of the Whitney tower and changes $\lambda(\cW)$ by $a_{ijk}\cdot\langle ((i,j),k), l \rangle$, where  $a_{ijk}= \lambda_0(S_{((i,j),k)},A_l)\in\Z$ (since any intersections between the new Whitney disk $W_{((i,j),k)}\# S_{((i,j),k)}$ and $A_i,A_j,A_k$ are repeating intersections).

Letting $S_{((i,j),k)}$ vary over a basis $S^{\alpha}$ for $\pi_2X$
(mod torsion) for distinct triples $i$, $j$, $k$ in $\{1,2,3,4\}$, 
this construction generates a subgroup of $\Lambda_2(4)$ isomorphic to
$d\Z\oplus d\Z$, where $d$
is the greatest common divisor of $\lambda_0(S^\alpha,A_i)$ over all
$S^\alpha$ and $i$. In particular, if these order $0$ intersections are relatively prime, then the target 
$\Lambda_2(4)/\mathrm{INT}_2(A)$
of 
$\lambda_2(A)$ is trivial:
\begin{prop}\label{dual-in-order2-tower}
If $A=A_1,A_2,A_3,A_4$ admits an order $2$ non-repeating Whitney tower
and if $\mathrm{gcd}(\{ \lambda_0(S^\alpha,A_i)\}_{\alpha,i})=1$, 
then $A$ can be pulled apart. $\hfill\square$
\end{prop}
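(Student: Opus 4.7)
The plan is to leverage the tubing construction of Section~\ref{subsec:tubing-indeterminacies} to annihilate $\lambda_2(\cW)$ and then chain Theorem~\ref{thm:non-rep-obstruction-theory} with Theorem~\ref{thm:tower-separates} (applied with $m-1=3$) to pull $A$ apart.

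I would start with an arbitrary order~$2$ non-repeating Whitney tower $\cW$ supported by $A$, so that $\lambda_2(\cW) \in \Lambda_2(4) \cong \Z \oplus \Z$. Recall that $\Lambda_2(4)$ is spanned by the three ``H-shaped'' trees, one for each partition of $\{1,2,3,4\}$ into two unordered pairs, subject to the single IHX relation $\langle((1,4),2),3\rangle = t_1 + t_2$ recorded in Section~\ref{subsec:order-2-INT}. By the discussion in Section~\ref{subsec:tubing-indeterminacies}, given any $\cW$ one may introduce a clean order~$2$ Whitney disk $W_{((i,j),k)}$ of any prescribed combinatorial type (via finger moves near a $1$-complex, as in the proof of Theorem~\ref{thm:duals}); tubing $W_{((i,j),k)}$ into a basis sphere $S^\alpha$ of $\pi_2 X$ modulo torsion then preserves the order~$2$ non-repeating condition and changes $\lambda_2(\cW)$ by $\lambda_0(S^\alpha, A_l) \cdot \langle ((i,j),k), l \rangle$, where $l$ is the fourth index.

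The key combinatorial observation is that for each of the three H-tree generators $T$, each of the four integers $\lambda_0(S^\alpha, A_i)$, $i \in \{1,2,3,4\}$, appears (up to sign) as an achievable coefficient on $T$ by varying the combinatorial type of the clean Whitney disk. For instance, $\langle((1,2),3),4\rangle$ receives contributions with coefficients $\lambda_0(S^\alpha, A_4)$, $\lambda_0(S^\alpha, A_3)$, $\lambda_0(S^\alpha, A_2)$, $\lambda_0(S^\alpha, A_1)$ from tubing $S^\alpha$ into clean Whitney disks of types $W_{((1,2),3)}$, $W_{((1,2),4)}$, $W_{((3,4),1)}$, $W_{((3,4),2)}$, respectively. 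Letting $\alpha$ and the disk type both vary, the $\Z$-span of achievable coefficients on each H-tree is the ideal in $\Z$ generated by $\{\lambda_0(S^\alpha, A_i)\}_{\alpha,i}$, which equals $\Z$ under the gcd-one hypothesis. Thus the tubing moves generate all of $\Lambda_2(4)$ as admissible modifications to $\lambda_2(\cW)$, and some finite sequence of tubings produces a modified order~$2$ non-repeating Whitney tower with $\lambda_2 = 0$.

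The main obstacle I anticipate is sign and orientation bookkeeping: verifying that AS-relations at the new trivalent vertex allow each listed coefficient to be realized in both signs, so that the tubing subgroup is genuinely all of $\Lambda_2(4)$ and not merely an index-two subgroup; and confirming that the finger-move creation of clean Whitney disks of arbitrary combinatorial type preserves the order~$2$ non-repeating condition. Granted these essentially standard details, Theorem~\ref{thm:non-rep-obstruction-theory} then delivers an order~$3$ non-repeating Whitney tower on a homotopic representative of $A$, and Theorem~\ref{thm:tower-separates} with $m = 4$ concludes that $A$ can be pulled apart.
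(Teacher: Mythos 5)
Your proposal is correct and follows essentially the same route as the paper: Section~\ref{subsec:tubing-indeterminacies} introduces exactly the clean-Whitney-disk-plus-tubing construction you use, observes that it changes $\lambda_2(\cW)$ by $\lambda_0(S^\alpha,A_l)\cdot\langle((i,j),k),l\rangle$, and concludes that varying over basis spheres and index triples generates a subgroup of $\Lambda_2(4)$ whose quotient is $\Z_d\oplus\Z_d$ with $d$ the stated gcd, so that $d=1$ makes the target trivial and the result follows from Theorems~\ref{thm:non-rep-obstruction-theory} and~\ref{thm:tower-separates}. Your extra care about which H-tree each combinatorial type $W_{((i,j),k)}$ contributes to (via AS/IHX identifications) and about achievability of both signs is a sound elaboration of the paper's brief remark that signs can be fixed by choosing the orientation of the sphere and the guiding arc.
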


\subsubsection{Tubing order $1$ Whitney disks into spheres}\label{subsec:tubing-order-1-indeterminacies}
Again as in the proof of Theorem~\ref{thm:duals}, for any choice
of distinct indices $\cW$ can be modified to have two additional clean order $1$ Whitney disks $W_{(i,j)}$ and $W_{(k,l)}$.
Tubing either of these Whitney disks into an arbitrary $2$--sphere might create unpaired order $1$ non-repeating intersections (between $A$ and the $2$--sphere) and hence not preserve the order of $\cW$, however tubing into $2$--spheres created by stabilization does indeed preserve the order (since intersections among order $1$ Whitney disks are of order $2$). In fact, a single stablization is all that is needed to kill any obstruction to pulling
apart the quadruple $A_1,A_2,A_3,A_4$:
\begin{prop}\label{stabilization-in-order2-tower}
If $A=A_1,A_2,A_3,A_4$ admit an order $2$ non-repeating Whitney tower, 
then $A$ can be pulled apart in the connected sum of 
$X$ with a \emph{single} $S^2\times S^2$ (or a \emph{single} $\CP^2$, or a \emph{single} $\overline{\CP}^2$). 
\end{prop}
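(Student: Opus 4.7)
The plan is to follow the template of the stable separation theorem (\ref{sec:stable-thm-proof}), but to realize cancellations of \emph{both} generators of $\Lambda_2(4)\cong \Z t_1\oplus\Z t_2$ within a \emph{single} $S^2\times S^2$ summand. Once we have modified the given order $2$ non-repeating Whitney tower $\cW$ to a new order $2$ non-repeating Whitney tower $\cW'$ in $X\#(S^2\times S^2)$ with $\lambda_2(\cW')=0\in\Lambda_2(4)$, Theorem~\ref{thm:non-rep-obstruction-theory} will upgrade $\cW'$ to an order $3$ non-repeating Whitney tower on a homotopic quadruple, and Theorem~\ref{thm:tower-separates} (applied with $m=4$, so that the required order is $m-1=3$) will pull the four spheres apart in the stabilization.

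After stabilizing, fix a dual pair of framed embedded spheres $\sigma,\tau\subset S^2\times S^2$ with $\lambda_0(\sigma,\tau)=1$ and $\lambda_0(\sigma,\sigma)=\lambda_0(\tau,\tau)=0$, and note that pairwise disjoint parallel copies $\sigma_1,\sigma_2,\dots$ and $\tau_1,\tau_2$ are available within the same summand. Writing $\lambda_2(\cW)=n_1 t_1+n_2 t_2$, introduce by finger moves $|n_1|$ clean copies of $W_{(1,2)}$ together with one clean $W_{(3,4)}$ (targeting $t_1=\langle(1,2),(3,4)\rangle$), and similarly $|n_2|$ clean copies of $W_{(1,3)}$ together with one clean $W_{(2,4)}$ (targeting $t_2=\langle(1,3),(2,4)\rangle$). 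These clean Whitney disks do not change $\lambda_2$. Now tube the $W_{(1,2)}$'s and $W_{(1,3)}$'s into distinct parallel copies of $\sigma$, and tube $W_{(3,4)},W_{(2,4)}$ into $\tau_1,\tau_2$ respectively, orienting the tubes to produce the signs $-\sign(n_i)$.

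The key observation, which I view as the main conceptual content, is that of all the new intersections created, only the ``correct'' ones are non-repeating: the $|n_1|$ points of $(W_{(1,2)}\#\sigma_i)\cap(W_{(3,4)}\#\tau_1)$ contribute $\pm t_1$ and the $|n_2|$ points of $(W_{(1,3)}\#\sigma_j)\cap(W_{(2,4)}\#\tau_2)$ contribute $\pm t_2$, while the ``cross'' intersections $(W_{(1,2)}\#\sigma_i)\cap(W_{(2,4)}\#\tau_2)$ and $(W_{(1,3)}\#\sigma_j)\cap(W_{(3,4)}\#\tau_1)$ have trees $\langle(1,2),(2,4)\rangle$ and $\langle(1,3),(3,4)\rangle$ with a repeated label, hence are repeating order $2$ intersections which are invisible to $\lambda_2$ and need not be paired in a non-repeating Whitney tower. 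This is exactly what allows a \emph{single} $S^2\times S^2$ to suffice. With the orientations chosen above, the modified tower $\cW'$ satisfies $\lambda_2(\cW')=0\in\Lambda_2(4)$, and we conclude as described in the first paragraph. For the $\CP^2$ and $\overline{\CP}^2$ variants, $[\CP^1]$ plays the role of a self-dual sphere ($\lambda_0(\CP^1,\CP^1)=\pm 1$); tubing into $\CP^1$ produces a Whitney disk with non-trivial normal Euler number, which is corrected by boundary-twisting, creating only repeating self-intersections in the order $1$ Whitney disks and so preserving both the order of the tower and the value of the non-repeating intersection invariant. The only real subtlety is the combinatorial bookkeeping of which parallel meets which, together with the orientation conventions needed to force each tube to contribute with a prescribed sign, and these are routine within the machinery already developed.
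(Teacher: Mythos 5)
Your proof is correct and follows essentially the same strategy as the paper's: split $\lambda_2(\cW)$ into its $t_1$ and $t_2$ components, introduce clean first-order Whitney disks of types $(1,2),(3,4)$ and $(1,3),(2,4)$, tube them into the dual pair $\sigma,\tau$ of a single stabilization, and observe that every cross-intersection tree has a repeated label and is therefore irrelevant to the non-repeating invariant. The only cosmetic difference is in how the multiplicity is distributed -- you tube $|n_i|$ copies of the first Whitney disk each into one parallel of $\sigma$ and the second into a single $\tau$, while the paper tubes one copy into $S$ and the other into $|a_i|$ parallels of $S'$ -- but the key ``cross-terms are repeating'' observation and the $\CP^2$/$\overline{\CP}^2$ framing correction by boundary-twisting are identical to the paper's argument.
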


 \emph{Proof:}  
 $S^2\times S^2$, or $\CP^2$, or $\overline{\CP}^2$. We will show how to change $\lambda_2(\cW)$ by any integral linear combination $a_1 t_1+a_2 t_2$
 of the above generators $t_1$, $t_2$ of $\Lambda_2(4)$: To create $a_1 t_1$, first modify $\cW$ to have two additional clean Whitney disks $W_{(1,2)}$ and $W_{(3,4)}$, then tube 
 $W_{(1,2)}$ into $S$, and tube $W_{(3,4)}$ into $|a_1|$-many copies of $S'$ (where the sign of $a_1$ corresponds to the orientations of the copies of $S'$). Note that in case of stabilization by $\CP^2$ or $\overline{\CP}^2$, the extra intersections coming from
 taking $|a_1|$ copies of $\CP^1$ are all repeating intersections, so that $\lambda_2(\cW)$ is indeed only changed by $a_1 t_1$.
 Now, to further create $a_2 t_2$ proceed in the same way starting with two additional clean Whitney disks $W_{(1,3)}$ and $W_{(2,4)}$,
which are tubed into a parallel copy of the same $S$ and $|a_2|$-many copies of $S'$. This will also create intersections with the previous copies of $S$ and $S'$,
but these extra intersections will all be repeating intersections. (Any Whitney disks tubed into copies of $\CP^1$ can be framed as in the proof of 
Theorem~\ref{thm:stable-separation}, see section~\ref{sec:stable-thm-proof}.)
$\hfill\square$
\begin{rem}
By Poincar\'{e} duality the statement of Proposition~\ref{stabilization-in-order2-tower} holds for a single stabilization by taking the connected sum of $X$ with any
simply connected closed $4$--manifold other than $S^4$.
\end{rem}

From the observations just before Proposition~\ref{dual-in-order2-tower}, the existence of any non-trivial order $0$ intersections between any $A_i$ and any $2$--spheres in $X$ implies that the obstruction to pulling apart the $A_i$ lives in a finite quotient of $\Lambda_2(4)$.
Returning to our goal of defining the INT$_2$ relations which clarify Conjecture~\ref{conj:INT},
we will consider settings where the target for $\lambda_2(A)$ is potentially infinite.
   
\subsubsection{Linear INT$_2$ relations}\label{sec:linear-INT2} 
Assume first that all order $0$ non-repeating intersections $\lambda_0$ on $\pi_2X$ vanish.

Let $i,j,k,l$
denote distinct indices in $\{1,2,3,4\}$.

Suppose that $W_{(i,j)}$ is an order $1$ Whitney disk in $\cW$, and that $W_{(i,j)}'$ is a different choice of order $1$ Whitney disk
with the same boundary as $W_{(i,j)}$ such that all intersections $W_{(i,j)}'\cap A_k$ and $W_{(i,j)}'\cap A_l$ are paired by order $2$
Whitney disks. Then replacing $W_{(i,j)}$ by $W_{(i,j)}'$, and replacing the order $2$ Whitney disks supported by $W_{(i,j)}$ with those supported by $W_{(i,j)}'$, changes $\cW$ to another order $2$ non-repeating Whitney tower $\cW'$ on $A$.
The union of $W_{(i,j)}$ with $W_{(i,j)}'$ along their common boundary is a $2$--sphere $S_{(i,j)}=W_{(i,j)}\cup W'_{(i,j)}$ with
$\lambda_0(S_{(i,j)}, A_k, A_l)=0\in\Lambda_0((i,j),k,l)$
as pictured (schematically) in Figure~\ref{INT2-ij}.
\begin{figure}[ht!]
         \centerline{\includegraphics[scale=.55]{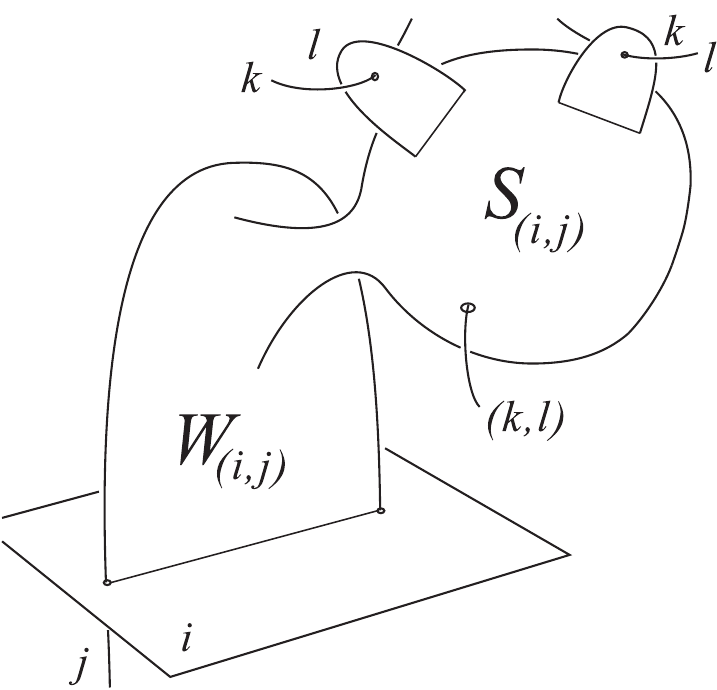}}
         \caption{Changing the interior of $W_{(i,j)}$ to $W'_{(i,j)}$
         corresponds to tubing $W_{(i,j)}$ into a $2$--sphere $S_{(i,j)}$.
         Only intersections which contribute to the difference $\lambda_2(\cW)-\lambda_2(\cW')\in\Lambda_2(4)$
         are shown.}
         \label{INT2-ij}
\end{figure}

Via the map $\Lambda_1((i,j),k,l)\to \Lambda_2(4)$ induced by sending ${(i,j)}\,-\!\!\!\!<^{\,k}_{\,l}$ to $^j_i>\!\!\!\!-\!\!\!\!-\!\!\!\!\!-\!\!\!\!<^{\,k}_{\,l}$, the corresponding change $\lambda_2(\cW)-\lambda_2(\cW')\in\Lambda_2(4)$ is equal to the image of the order $1$ non-repeating intersection invariant
$\lambda_1(S_{(i,j)},A_k,A_l)$, which is defined in
$\Lambda_1((i,j),k,l)$ since the vanishing of $\lambda_0$ means that all INT$_1$ relations are trivial.

Similarly changing the interiors of any number of the order 1 Whitney disks in $\cW$ leads to the following definition, which makes Conjecture~\ref{conj:INTm=4} precise in this setting:
\begin{defn}\label{def:INT2-vanishing-lambda0}
For a quadruple of 2--spheres $A=A_1,A_2,A_3,A_4\looparrowright X$ with $\lambda_1(A)=0$, with $X$ simply connected and $\lambda_0$ vanishing on $\pi_2X$, 
define the order $2$ intersection relations $\mathrm{INT}_2(A)<\Lambda_2(4)$ to be the subgroup generated by
$$
\,^j_i>\!\!\!-\,\lambda_1(S_{(i,j)},A_k,A_l)
$$ 
over all choices of distinct $i,j,k,l$ and all representatives $S_{(i,j)}$ of $\pi_2X$.
\end{defn}

This definition of $\mathrm{INT}_2(A)$ describes all possible changes in the order $2$ intersections due to choices of Whitney disks
for fixed choices of boundaries of order $1$ Whitney disks (up to isotopy), so by Proposition~\ref{prop:fixed-immersion} what remains to be done to confirm Conjecture~\ref{conj:INTm=4} in this case is to show that $\lambda_2(\cW)\in\Lambda_2(4)/\mathrm{INT}_2(A)$ is independent of the choice of order $1$ Whitney disk boundaries.

\subsubsection{Computing the linear INT$_2$ relations}\label{subsec:computing-linear-INT2}
In this setting (where $\lambda_0$ vanishes on $\pi_2X$), the subgroup $\mathrm{INT}_2(A)$ can be computed as follows:

For a basis $S^{\alpha}$ for the rank $r$ $\Z$-module $\pi_2X$
(mod torsion), and integers $a^{\alpha}_{ij}$
defined by
$$
\lambda_1(S^{\alpha}_{(i,j)},A_k,A_l)=a^{\alpha}_{ij}\,\,(i,j)-\!\!\!\!\!-\!\!\!<^{\,k}_{\,l},
$$
the $\mathrm{INT}_2(A)$ relations are described as the image of the linear map
$\Z^{6r}\rightarrow \Z^2$ given in the basis $\{t_1, t_2\}=\{
\,\,^2_1>\!\!\!-\!\!\!\!-\!\!\!\!\!-\!\!\!<^{\,3}_{\,4} \,\, , 
\,\,^3_1>\!\!\!-\!\!\!\!-\!\!\!\!\!-\!\!\!<^{\,2}_{\,4}\,\}
$
 by:
$$
\begin{pmatrix}
  x_{12} \\
  x_{34} \\
  x_{13} \\
  x_{24} \\
  x_{14} \\
  x_{23}
\end{pmatrix}\mapsto
\begin{pmatrix}
  a_{12} & a_{34} & 0 & 0 & a_{14} & a_{23} \\
  0 & 0 & a_{13} & a_{24} & a_{14} & a_{23}
\end{pmatrix}
\begin{pmatrix}
  x_{12} \\
  x_{34} \\
  x_{13} \\
  x_{24} \\
  x_{14} \\
  x_{23}
\end{pmatrix}
$$
where the multiplication of entries is vector inner product.

Examples in this setting realizing any coefficient matrix can be constructed by attaching $2$-handles to $B^4$ along $0$-framed links
in $S^3$
with vanishing linking matrix. 	

\subsubsection{Quadratic INT$_2$ relations}\label{sec:quadratic-INT2} 
Now assume that the $A_i$ represent elements in the radical of $\lambda_0$ on $\pi_2X$, but that $\lambda_0$ may otherwise be non-trivial.

We continue to investigate changes in order $2$ intersections due to choices of interiors of Whitney disks in $\cW$ supported by $A$.
Changing the interior of a Whitney disk 
$W_{(i,j)}$ to $W_{(i,j)}'$ along their common boundary again leads to a $2$--sphere $S_{(i,j)}=W_{(i,j)}\cup W'_{(i,j)}$ whose order $1$ intersections with $A_k,A_l$ determine $\lambda_2(\cW)-\lambda_2(\cW')\in\Lambda_2(4)$, but 
the order $1$ invariant
$\lambda_1(S_{(i,j)},A_k,A_l)$ that we want to use to measure this change may now itself have indeterminacies coming from non-trivial order $0$ intersections between $S_{(i,j)}$ and any $2$--spheres in $X$.

Specifically, $\lambda_1(S_{(i,j)},A_k,A_l)$ takes values in $\Lambda_1((i,j),k,l)$
modulo $\mathrm{INT}_1(S_{(i,j)},A_k,A_l)$, where the INT$_1(S_{(i,j)},A_k,A_l)$ relations are:
\begin{equation}
^{\;\;\;k}_{(i,j)}>\!\!\!-\,\lambda_0(S_{((i,j),k)},A_l)=0
\label{rel1}
\end{equation}
\begin{equation}
 ^{(i,j)}_{\;\;\;l}>\!\!\!-\,\lambda_0(S_{(l,(i,j))},A_k)=0 \label{rel2}
\end{equation}
\begin{equation}
_k^l>\!\!\!-\,\lambda_0(S_{(k,l)},S_{(i,j)})=0. \label{rel3}
\end{equation}
Note that the first two relations are empty by our assumption that the $A_i$ have vanishing order $0$ intersections
with all $2$--spheres. The third relation corresponds to indeterminacies in $\lambda_1(S_{(i,j)},A_k,A_l)$ due to the choice of interiors of order $1$ Whitney disks
pairing $A_k\cap A_l$, so computing with the order $1$ Whitney disks $W_{(k,l)}$ in $\cW$ determines a lift 
$\lambda^{\cW}_1(S_{(i,j)},A_k,A_l)\in \Lambda_1((i,j),k,l)$. 
Mapping ${(i,j)}\,-\!\!\!<^{\,k}_{\,l}$ to $^j_i>\!\!\!-\!\!\!\!-\!\!\!\!\!-\!\!\!<^{\,k}_{\,l}$, we have: 
$$
\lambda_2(\cW)-\lambda_2(\cW')=\,\,^j_i>\!\!\!-\,\lambda_1^{\cW}(S_{(i,j)},A_k,A_l)\in\Lambda_2(4).
$$

Now consider changing both $W_{(i,j)}$ to $W_{(i,j)}'$, and some
$W_{(k,l)}$ to $W_{(k,l)}'$ in as illustrated in
Figure~\ref{INT2-ijkl} (recall that $i,j,k,l$ are distinct). The
resulting change 
$\Delta^{\cW}(S_{(i,j)},S_{(k,l)}):=\lambda_2(\cW)-\lambda_2(\cW')\in\Lambda_2(4)$ can be expressed
as $\Delta^{\cW}(S_{(i,j)},S_{(k,l)})=$
$$ ^j_i>\!\!\!-\,\lambda_1^{\cW}(S_{(i,j)},A_k,A_l)\,+\,
{^j_i>\!\!\!-\,}\lambda_0(S_{(i,j)},S_{(k,l)})\,-\!\!\!<^{\,k}_{\,l}  \,+\,\lambda_1^{\cW}(A_i,A_j,S_{(k,l)})\,-\!\!\!<^{\,k}_{\,l}
$$
Here the $2$--sphere $S_{(k,l)}$ determined by $W_{(k,l)}$ and
$W_{(k,l)}'$ contributes the right-hand term $\lambda_1^{\cW}(A_i,A_j,S_{(k,l)})$ just as discussed above for
$S_{(i,j)}$, but now there is also a ``cross-term'' coming from
order $0$ intersections between $S_{(i,j)}$
and $S_{(k,l)}$. As in the previous paragraph
$\lambda_1^{\cW}(S_{(i,j)},A_k,A_l)$ and
$\lambda_1^{\cW}(A_i,A_k,S_{(k,l)})$ are lifts of the
corresponding order $1$ invariants. The 
three homotopy invariants $\lambda_1(S_{(i,j)},A_k,A_l)$, $\lambda_1(A_i,A_j,S_{(k,l)})$,
and $\lambda_0(S_{(i,j)},S_{(k,l)})$ are independent, so the given expression for $\Delta^{\cW}(S_{(i,j)},S_{(k,l)})$
only depends on $\cW$ and the homotopy classes of $S_{(i,j)}$ and $S_{(k,l)}$. 
\begin{figure}[ht!]
         \centerline{\includegraphics[scale=.55]{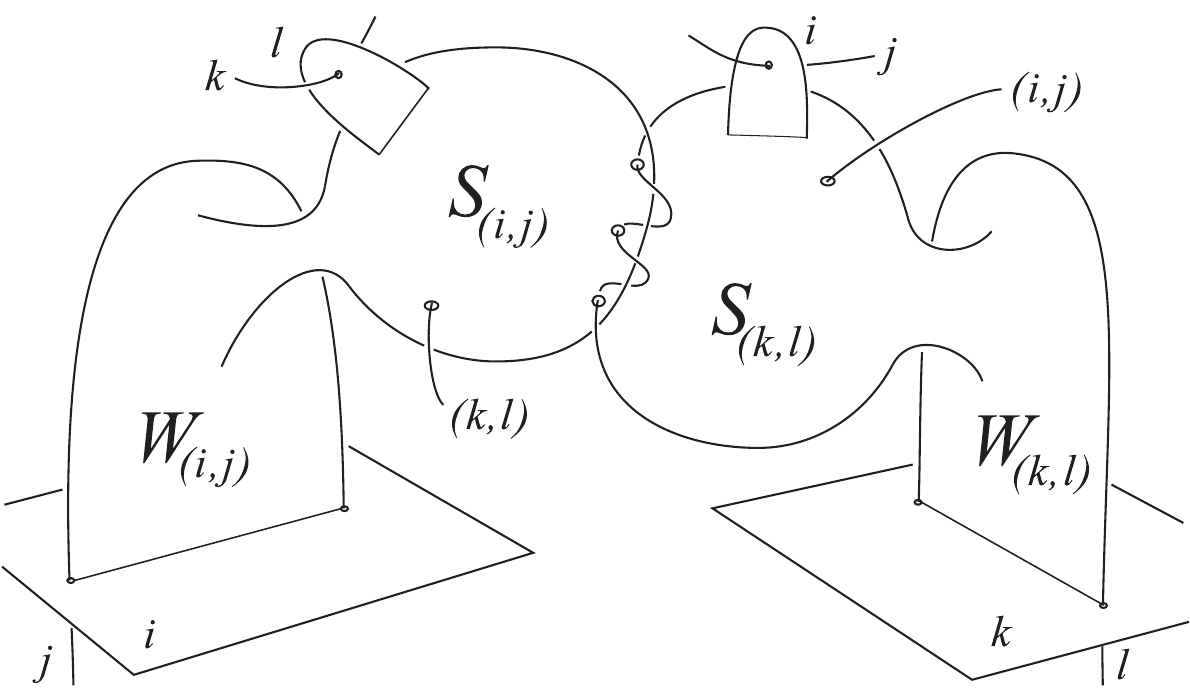}}
         \caption{A schematic illustration of how order $0$ intersections between 
         $S_{(i,j)}$ and $S_{(k,l)}$ can contribute order $2$ indeterminacies.
         (Only relevant intersections are shown.)}
         \label{INT2-ijkl}
\end{figure}

Observe that, since the intersection invariants sum over contributions from the Whitney disks, this entire
discussion applies word for word to changing \emph{all} the first
order Whitney disks $W_{(i,j)}$ on $A_i$ and $A_j$, and \emph{all}
the first order Whitney disks $W_{(k,l)}$ on $A_k$ and $A_l$;
with the $2$--spheres $S_{(i,j)}$ and $S_{(k,l)}$ interpreted as
sums (geometrically: unions) of the $2$--spheres determined
by each pair of Whitney disks.

\begin{defn}\label{def:INT2-A-in-radical}
For a quadruple of 2--spheres $A=A_1,A_2,A_3,A_4\looparrowright X$ with $X$ simply connected and  
$A$ in the radical of $\lambda_0$ on $\pi_2X$, 
define the order $2$ intersection relations $\mathrm{INT}_2^{\cW}(A)<\Lambda_2(4)$ to be the sub\emph{set}
$$\displaylines{
\mathrm{INT}_2^{\cW}:=\bigcup
\{-\Delta^{\cW}(S_{(1,2)},S_{(3,4)})-\Delta^{\cW}(S_{(1,3)},S_{(2,4)})
-\Delta^{\cW}(S_{(1,4)},S_{(2,3)})\}\hfill\break\cr
\hfill\subset\Lambda_2(4).}
$$
where $(i,j)$, $(k,l)$ vary over the pair-choices $(1,2),(3,4)$, and
$(1,3),(2,4)$ and $(1,4),(2,3)$; and where
$S_{(i,j)}$ and $S_{(k,l)}$ vary over all (homotopy
classes of) $2$--spheres in $X$.
\end{defn}

Note that, as defined, $\mathrm{INT}_2^{\cW}$ is only a subset of $\Lambda_2(4)$.

Since the above construction can be carried out simultaneously for the three pair-choices,
it follows that if $\lambda_2(\cW)\in \mathrm{INT}_2^{\cW}$, then it can be arranged that the $A_i$ 
support $\cW'$ with $\lambda_2(\cW')=0\in\Lambda_2(4)$, so the $A_i$ can be
pulled apart.

Since $\mathrm{INT}^{\cW}_2$ always contains the zero element of
$\Lambda_2(4)$,
the statement of Conjecture~\ref{conj:INTm=4} makes sense, with $\mathrm{INT}^{\cW}_2$ taking the place of 
$\mathrm{INT}_2(A)$. It would be desirable to have a formulation of the general INT$_2$ relations just in terms of $A$,
rather than $\cW$.

In the case that all order $0$ intersections vanish on $\pi_2X$,
then $\mathrm{INT}^{\cW}_2$ reduces to the subgroup $\mathrm{INT}_2(A)<\Lambda_2(4)$
of Definition~\ref{def:INT2-vanishing-lambda0}.

\subsubsection{Computing the quadratic $\mathrm{INT}_2$ relations}\label{subsubsec:quadratic-INT2-maps}
In this setting, $\mathrm{INT}^{\cW}_2$ can be computed as follows:

For a basis $S^{\alpha}$ for the rank $r$ $\Z$-module $\pi_2X$
(mod torsion), let $Q=q^{\alpha\beta}=\lambda_0(S^{\alpha},S^{\beta})$ denote the
intersection matrix. For integers $a^{\alpha}_{ij}$
defined by
$$
\lambda_1^{\cW}(S^{\alpha}_{(i,j)},A_k,A_l)=a^{\alpha}_{ij}\,\,(i,j)-\!\!\!\!\!-\!\!\!<^{\,k}_{\,l}
$$
we have the formula
$$
\begin{array}{rl}
 \Delta^{\cW}&\kern-8pt(\sum_{\alpha}x_{ij}^{\alpha}S_{(i,j)}^{\alpha},\sum_{\beta}x_{kl}^{\beta}S_{(k,l)}^{\beta})
 =\\
&= \sum_{\alpha}x_{ij}^{\alpha}a_{ij}^{\alpha}+ \sum_{\beta}x_{kl}^{\beta}a_{kl}^{\beta}+\sum_\alpha\sum_\beta x_{ij}^{\alpha}x_{kl}^{\beta}q^{\alpha\beta}\\
   &= \quad\quad x_{ij}\cdot a_{ij}+x_{kl}\cdot
   a_{kl}+x_{ij}Qx^T_{kl}
\end{array}
$$
where the $x_{uv}$ and $a_{uv}$ are vectors in $\Z^r$. 

Using the basis 
$\{t_1, t_2\}=\{
\,\,^2_1>\!\!\!-\!\!\!\!-\!\!\!\!\!-\!\!\!<^{\,3}_{\,4} \,\, , 
\,\,^3_1>\!\!\!-\!\!\!\!-\!\!\!\!\!-\!\!\!<^{\,2}_{\,4}\,\}
$
for 
$\Lambda_2(4)$,
computing $\mathrm{INT}^{\cW}_2$ amounts to determining the image
of the map $\Z^{6r}\rightarrow \Z^2$:
$$
\begin{pmatrix}
  x_{12} \\
  x_{34} \\
  x_{13} \\
  x_{24} \\
  x_{14} \\
  x_{23}
\end{pmatrix}\mapsto
\begin{pmatrix}
  a_{12} & a_{34} & 0 & 0 & a_{14} & a_{23} \\
  0 & 0 & a_{13} & a_{24} & a_{14} & a_{23}
\end{pmatrix}
\begin{pmatrix}
  x_{12} \\
  x_{34} \\
  x_{13} \\
  x_{24} \\
  x_{14} \\
  x_{23}
\end{pmatrix}
+ \begin{pmatrix}
  x_{12}Qx^T_{34}+x_{14}Qx^T_{23} \\
  x_{13}Qx^T_{24}+x_{14}Qx^T_{23}
\end{pmatrix}
$$
where the multiplication of entries is vector inner product.

For example, in the easiest case where just a single
$2$--sphere generator $S$ has non-trivial self-intersection number
$\lambda_0(S,S')=q\neq 0\in\Z$,
we have that $\mathrm{INT}^{\cW}_2$ is 
the image of the map
$\Z^6\rightarrow \Z^2$ given by:
$$
\begin{pmatrix}
  x_{12} \\
  x_{34} \\
  x_{13} \\
  x_{24} \\
  x_{14} \\
  x_{23}
\end{pmatrix}\mapsto
\begin{pmatrix}
  a_{12} & a_{34} & 0 & 0 & a_{14} & a_{23} \\
  0 & 0 & a_{13} & a_{24} & a_{14} & a_{23}
\end{pmatrix}
\begin{pmatrix}
  x_{12} \\
  x_{34} \\
  x_{13} \\
  x_{24} \\
  x_{14} \\
  x_{23}
\end{pmatrix}
+ q\begin{pmatrix}
  x_{12}x_{34}+x_{14}x_{23} \\
  x_{13}x_{24}+x_{14}x_{23}
\end{pmatrix}
$$
Examples in this setting realizing any coefficient matrix can be constructed by attaching $2$-handles to $B^4$ along links in $S^3$, and the following questions arise: Is this image always a subgroup of $\Z\oplus\Z$? Konyagin and Nathanson have shown in \cite[Thm.3]{Kon} that the image always projects to subgroups in each $\Z$-summand. And under what conditions will the image be all of
$\Z\oplus\Z$? This would imply that the $A_i$ can be pulled apart. What about
analogous questions in the general case where the equations are coupled by the intersection
matrix $Q$?


\Addresses

\end{document}